\documentclass[10pt,leqno]{amsart}

\usepackage{amssymb}
\usepackage{graphicx}
\usepackage{enumitem}
\usepackage{subcaption}
\usepackage{booktabs}
\usepackage{algorithm}
\usepackage{algorithmic}
\usepackage[numbers,sort&compress]{natbib}
\usepackage[hidelinks]{hyperref}
\usepackage{orcidlink}
\usepackage[capitalize,nameinlink]{cleveref}

\graphicspath{{./}}
\DeclareGraphicsExtensions{.pdf,.png,.jpg}

\numberwithin{equation}{section}
\numberwithin{algorithm}{section}

\newtheorem{theorem}{Theorem}[section]
\newtheorem{proposition}[theorem]{Proposition}
\newtheorem{lemma}[theorem]{Lemma}
\newtheorem{corollary}[theorem]{Corollary}

\theoremstyle{remark}
\newtheorem{remark}[theorem]{Remark}

\newtheorem{fact}[theorem]{Fact}
\newtheorem{assumption}[theorem]{Assumption}
\crefname{hypothesis}{Hypothesis}{Hypotheses}
\crefname{fact}{Fact}{Facts}

\newcommand{\Hcal}{\mathcal{H}}
\newcommand{\Kcal}{\mathcal{K}}
\newcommand{\R}{\mathbb{R}}
\newcommand{\Id}{\operatorname{Id}}
\newcommand{\Fix}{\operatorname{fix}}
\newcommand{\dom}{\operatorname{dom}}

\newcommand{\subd}{\partial}

\DeclareMathOperator{\prox}{prox}
\DeclareMathOperator{\argmin}{arg\,min}
\DeclareMathOperator{\dist}{dist}

\DeclareMathOperator{\diag}{diag}

\newcommand{\norm}[1]{\left\lVert #1 \right\rVert}
\newcommand{\ip}[2]{\left\langle #1,#2 \right\rangle}

\newcommand{\grad}{\nabla}

\title[Curvature-Recycling Douglas--Rachford Splitting]{Curvature Recycling Douglas-Rachford Splitting: Transported Quasi-Newton Models for Expensive Smooth Proximal Subproblems}
\author[M. Maharramov]{Musa Maharramov\,\orcidlink{0000-0002-4323-0987}}
\thanks{Research profiles: \href{https://orcid.org/0000-0002-4323-0987}{ORCID iD 0000-0002-4323-0987}; \href{https://scholar.google.com/citations?user=aIpqtCsAAAAJ\&hl=en}{Google Scholar}; \href{https://www.researchgate.net/profile/Musa_Maharramov}{ResearchGate}}
\address{Industrial Mathematics LLC, Spring, Texas, USA}
\email{musa@industrialmathematics.com}
\urladdr{https://www.industrialmathematics.com/}
\keywords{Douglas-Rachford splitting, Peaceman-Rachford splitting, ADMM, proximal methods, quasi-Newton methods, convex optimization}
\subjclass[2020]{90C25, 90C30, 90C53, 90C06}
\date{}
\hypersetup{
  pdftitle={Curvature Recycling Douglas-Rachford Splitting: Transported Quasi-Newton Models for Expensive Smooth Proximal Subproblems},
  pdfauthor={Musa Maharramov},
  pdfsubject={Curvature-recycling Douglas-Rachford splitting},
  pdfkeywords={Douglas-Rachford splitting, Peaceman-Rachford splitting, ADMM, proximal methods, quasi-Newton methods, convex optimization}
}

\begin{document}

\begin{abstract}
We consider
\[
    \min_x f(x)+g(x),
\]
where $f$ is smooth but its value and gradient are expensive, while $g$ is nonsmooth and has a cheap proximal map.  Douglas--Rachford splitting (DRS) then requires a sequence of smooth proximal solves.  These solves have different centers but share the same nonlinear curvature.

We introduce curvature-recycling DRS (CR-DRS).  The method transports an old exact residual to the new proximal center, reuses a quasi-Newton model, and moves the old proximal state before the first new gradient is evaluated.  We give paired direct and inverse BFGS and safeguarded symmetric-rank-one realizations.  A certified variant accepts a transported quasi-Newton step only when it passes descent and residual tests; otherwise it uses a fixed number of safe gradient steps.  Under strong convexity and smoothness, a small-gain condition gives global linear convergence with a constant number of new gradients per outer iteration.  A transported Dennis--Mor\'e condition gives local superlinear reduction of the new proximal error.  We show that this condition is not automatic, derive full-model and active-subspace sufficient conditions, and relate the active subspace to the local DRS dynamics induced by the nonsmooth proximal map.

Experiments on $\ell_1$- and total-variation-regularized problems show large reductions in expensive-gradient calls relative to restarted and curvature-only quasi-Newton proximal solves.  For directly proximal $\ell_1$ models, we also compare with FISTA.
\end{abstract}

\maketitle

\section{Introduction}
\label{sec:introduction}

We study
\begin{equation}
\label{eq:main-problem}
    \min_{x\in\Hcal} F(x):=f(x)+g(x),
\end{equation}
where $\Hcal$ is a finite-dimensional real Hilbert space, $f:\Hcal\to\R$ is convex and twice continuously differentiable, and $g:\Hcal\to\R\cup\{+\infty\}$ is proper, closed, and convex.  The cost model is asymmetric.  One call to $f$ or $\grad f$ may require a simulation or a forward/adjoint solve.  Vector operations, limited-memory matrix products, and $\prox_{\gamma g}$ are assumed much cheaper.

For $\gamma>0$ and relaxation $\alpha>0$, one DRS step is
\begin{align}
    p_k&=P_{\gamma f}z_k,
    &P_{\gamma f}z&:=\argmin_x\left\{\gamma f(x)+\frac12\norm{x-z}^2\right\},
    \label{eq:intro-proxf}\\
    y_k^g&=P_{\gamma g}(2p_k-z_k),
    \label{eq:intro-proxg}\\
    z_{k+1}&=z_k+2\alpha(y_k^g-p_k).
    \label{eq:intro-drs-update}
\end{align}
The usual DRS convention is $\alpha=1/2$; $\alpha=1$ gives the Peaceman--Rachford map.  The exact theory is recalled in Appendices~\ref{app:exact-drs} and~\ref{app:linear-drs}.

The difficult operation is often \eqref{eq:intro-proxf}.  At outer iteration $k$, it is the minimization of
\begin{equation}
\label{eq:intro-prox-family}
    \phi_k(x):=\gamma f(x)+\frac12\norm{x-z_k}^2.
\end{equation}
The proximal point is equivalently characterized by setting the residual
\begin{equation}
\label{eq:intro-prox-family-res}
    r_k(x):=\grad\phi_k(x)=\gamma\grad f(x)+x-z_k.
\end{equation}

The center $z_k$ changes.  The curvature does not:
\begin{equation}
\label{eq:intro-curvature-invariance}
    \grad^2\phi_k(x)=\gamma\grad^2 f(x)+I.
\end{equation}
This is the main structural fact.  It gives two exact reuse rules.  First, an old residual can be shifted to a new center without another gradient.  Second, secant pairs from earlier proximal solves remain valid because the center cancels from residual differences.

The symbols and fixed parameters in the algorithm are collected in Section~\ref{sec:notation-parameters}. 

CR-DRS uses these rules as follows.  

\begin{algorithm}[t]
\caption{CR-DRS at a high level}
\label{alg:high-level-crdrs}
\begin{algorithmic}[1]
\REQUIRE Outer state $z_k$; exact-gradient anchor $a_{k-1}$; memory $\mathcal M_k$ and its paired direct/inverse Hessian model $(B_k,H_k)$; parameters from Section~\ref{sec:notation-parameters}.
\STATE {\bf Predictor phase:} Transport the old residual to the current proximal problem:
       $\bar r_k=\gamma\grad f(a_{k-1})+a_{k-1}-z_k$.
\STATE Apply $K$ cheap curvature-correction model steps using $H_k$ for directions, $B_k$ for residual correction, and the no-line-search damping rule \eqref{eq:predictor-eta}; obtain $\widehat x_k$.
\STATE {\bf Corrector phase:} Evaluate one true current gradient $\grad f(\widehat x_k)$.
\STATE Accept $\widehat x_k$ if descent and residual certificates hold; otherwise use a fixed safe fallback block.
\STATE Form an exact secant pair and update $(B_k,H_k)$ by BFGS or a safeguarded symmetric-rank-one formula.
\STATE Apply $P_{\gamma g}$ and update $z_{k+1}$ by \eqref{eq:intro-drs-update}.
\end{algorithmic}
\end{algorithm}

The $K$ model steps do not call $\grad f$.  With a frozen paired model, they collapse to one damped quasi-Newton direction.  Their practical role is to continue a clipped step toward the current model minimizer.  The first new gradient checks the model and supplies a new exact curvature pair.

The paper has four main results.
\begin{enumerate}[label=(\roman*),leftmargin=2.2em]
    \item We give a precise CR-DRS algorithm and two curvature realizations: paired BFGS and safeguarded symmetric rank one (SR1).  Both retain only secants with exact-gradient endpoints.
    \item We give a certified fixed-budget method.  A transported quasi-Newton candidate is accepted only if it passes an Armijo-type descent test and a computable residual-contraction test.  A fixed number of gradient steps is used as a fallback.  The resulting inner call is uniformly contractive.
    \item We state the global theorem first: under the Giselsson--Boyd assumptions and a small-gain condition, the coupled outer DRS error and inner proximal-tracking error converge linearly.  The proof is then split into an inner contraction, an outer perturbation bound, and a two-state recursion.
    \item We adapt the Dennis--Mor\'e argument to the transported error direction.  Exact direct/inverse pairing removes a separate inversion-consistency hypothesis.  We give a DRS-realizable obstruction, formulate full-model and active-subspace sufficient conditions, and show how local regularity of $g$ confines the outer right-hand-side corrections to a fixed DRS-generated subspace.  For quadratic curvature, accepted SR1 secants recover the Hessian on any finitely excited active subspace.
\end{enumerate}

This work is related to three established lines of research.  Exact and inexact DRS, PRS, proximal-point, and ADMM theory provides the outer fixed-point framework and admissible error models \cite{DouglasRachford1956,LionsMercier1979,Rockafellar1976,EcksteinBertsekas1992,BauschkeCombettes2017,AlvesEcksteinGeremiaMelo2020}.  Linear DRS bounds and parameter choices are taken from Giselsson and Boyd \cite{GiselssonBoyd2017}.  Local manifold identification for partly smooth regularizers explains why many DRS/ADMM trajectories eventually evolve on a fixed low-dimensional tangent structure \cite{LiangFadiliPeyreLuke2015}.  Classical quasi-Newton theory provides BFGS, SR1, and Dennis--Mor\'e superlinear analysis for fixed objectives \cite{BroydenDennisMore1973,dennismore1974,DennisMore1977,NocedalWright2006}.  The difference here is the moving proximal target: the objective changes by an affine term at every outer step, and the quasi-Newton model is transported before the first current gradient.

Section~\ref{sec:method} defines CR-DRS and its BFGS and SR1 forms.  Section~\ref{sec:global} proves global convergence for the certified fixed-budget method.  Section~\ref{sec:superlinear} gives the local transported quasi-Newton result.  Section~\ref{sec:dm-coverage} studies when the transported Dennis--Mor\'e condition follows from full or active-subspace curvature learning and connects that subspace to the local DRS dynamics generated by $g$.  Section~\ref{sec:heuristics} records the practical predictor and safeguards.  Section~\ref{sec:numerics} reports the numerical tests.  Appendices~\ref{app:proximal-facts}--\ref{app:linear-drs} provide the proximal and exact-DRS prerequisites.

\section{Curvature-recycling DRS with BFGS and rank-one updates}
\label{sec:method}

\subsection{Notation and algorithmic parameters}
\label{sec:notation-parameters}

The proximal and reflected proximal maps $P_{\gamma h}$ and $R_{\gamma h}$ are defined in \eqref{eq:proximal-definition}--\eqref{eq:reflected-proximal-definition}.  The identity operator is $I=\Id$.  For self-adjoint operators, $A\preceq B$ means $\ip{x}{Ax}\le \ip{x}{Bx}$ for every $x$.  The transpose of a matrix is denoted by ${}^{\top}$, the Hilbert-space adjoint of a bounded linear map by ${}^{*}$, and $\varrho(M)$ denotes the spectral radius of a square matrix $M$.  We use
\[
  \operatorname{clip}_{[a,b]}(t):=\min\{b,\max\{a,t\}\}.
\]

The following notation is used throughout the body of the paper.
\begin{description}[style=nextline,leftmargin=2.8em,labelindent=0em]
\item[Outer and predictor indices.]
$k=0,1,\ldots$ is the DRS outer index.  The index $j=0,\ldots,K-1$ labels the cheap predictor steps.  The predictor depth $K$ is a nonnegative integer.  By convention, $K=0$ means no transported displacement: $\widehat x_k=a_{k-1}$ and $\eta_k^{\rm eff}=0$.

\item[DRS variables.]
$z_k$ is the DRS fixed-point state, $p_k=P_{\gamma f}z_k$ is the exact smooth proximal point, $x_k$ is the accepted approximation of $p_k$, and $y_k^g=P_{\gamma g}(2x_k-z_k)$ is the exact nonsmooth proximal output.  The proximal scale is $\gamma>0$, and $\alpha>0$ is the outer relaxation in $z_{k+1}=z_k+2\alpha(y_k^g-x_k)$.  The admissible ranges of $\alpha$ are stated in Appendices~\ref{app:exact-drs} and~\ref{app:linear-drs}.

\item[Exact anchor and memory.]
$a_{k-1}$ is an old point with a stored exact value $f(a_{k-1})$ and exact gradient $\grad f(a_{k-1})$; after a completed inner call we set $a_k=x_k$.  The initial anchor $a_{-1}$ is supplied by the user.  The memory cap is $m_q\in\{1,2,\ldots\}\cup\{\infty\}$.  When $m_q<\infty$, the ordered memory $\mathcal M_k$ contains at most $m_q$ accepted exact secant pairs $(s_i,q_i)$, where both endpoints have true gradients.  The choice $m_q=\infty$ denotes full-memory BFGS or SR1 and retains all accepted pairs.

\item[Curvature models.]
$H_k\succ0$ is an inverse-Hessian approximation for $\grad^2\phi_k$, and $B_k\succ0$ is its paired direct approximation.  The reference construction starts from $H_{k,0}=\vartheta_k I$ and $B_{k,0}=\vartheta_k^{-1}I$, where $\vartheta_k\in[\underline h,\overline h]$ and $0<\underline h\le\overline h<\infty$, and then replays the pairs in $\mathcal M_k$.  If the memory is nonempty and $(s_\ell,q_\ell)$ is its newest pair, we use
\begin{equation}
\label{eq:initial-qn-scaling}
  \vartheta_k
  :=\operatorname{clip}_{[\underline h,\overline h]}
       \left(\frac{\ip{s_\ell}{q_\ell}}{\norm{q_\ell}^2}\right);
\end{equation}
otherwise $\vartheta_k=\vartheta_0$ for a prescribed $\vartheta_0\in[\underline h,\overline h]$.

\item[Predictor variables and damping.]
$\widetilde x_{k,j}$ and $\widetilde r_{k,j}$ are the pseudo-state and modeled residual, $d_{k,j}$ is the quasi-Newton direction, and $\eta_{k,j}$ is its scalar damping.  The parameter $\bar\eta_k\in(0,1]$ is a cap fixed before the current rollout from stored information only, and $\nu_{\rm qn}\in(0,2)$ is the model-descent fraction in \eqref{eq:predictor-eta}.  The certified reference algorithm uses the constant policy $\bar\eta_k\equiv\bar\eta$ for a prescribed $\bar\eta\in(0,1]$; the past-data adaptive alternative is \eqref{eq:damping-trust-switch}.  The effective damping $\eta_k^{\rm eff}$ is defined in \eqref{eq:frozen-endpoint-new}.

\item[Pair and model safeguards.]
The constants $\epsilon_{\rm curv}\in[0,1)$ and $\epsilon_{\rm sr1}\in(0,1)$ are, respectively, the exact-pair curvature threshold \eqref{eq:pair-curvature-threshold} and the SR1 denominator threshold \eqref{eq:sr1-skip}.  The spectral bounds $\underline h$ and $\overline h$ are also used in \eqref{eq:sr1-spectral-bounds}.

\item[Certified inner call.]
The constant $c_A\in(0,1/2)$ is the one-shot Armijo parameter, $\theta_Q\in(0,1)$ is the requested contraction factor for an accepted quasi-Newton candidate, and $b\ge1$ is the fixed number of safe gradient steps used after rejection.  The strong-convexity and smoothness constants $m$ and $L$, and the fallback step $\tau_G$, are defined in \eqref{eq:m-L-prox} and \eqref{eq:safe-gradient-step}.

\item[Convergence terminology.]
A sequence $u_k$ converges $R$-linearly to $u_\star$ if $\norm{u_k-u_\star}\le Cq^k$ for some $C>0$ and $q\in(0,1)$.  A transported proximal candidate is called $Q$-superlinear relative to the ordinary warm start when the ratio in \eqref{eq:transport-superlinear-focused} tends to zero.  The symbols $O(\cdot)$ and $o(\cdot)$ have their usual deterministic asymptotic meanings.
\end{description}

\subsection{Exact transport and reusable secants}
\label{sec:transport-secant}

Let
\begin{equation}
\label{eq:method-prox-family}
    \phi_k(x)=\gamma f(x)+\frac12\norm{x-z_k}^2,
    \qquad
    r_k(x)=\gamma\grad f(x)+x-z_k,
    \qquad
    p_k=P_{\gamma f}z_k.
\end{equation}
Suppose $a$ is a point at which $\grad f(a)$ has already been evaluated.  Its exact residual for the current proximal problem is
\begin{equation}
\label{eq:exact-residual-transport}
    \bar r_k(a):=\gamma\grad f(a)+a-z_k.
\end{equation}
If $a$ was stored under center $z_j$ together with $r_j(a)$, then
\begin{equation}
\label{eq:residual-shift-simple}
    \bar r_k(a)=r_j(a)+z_j-z_k.
\end{equation}
No new call to $\grad f$ is needed.

For two exact-gradient points $u$ and $v$, set
\begin{equation}
\label{eq:reusable-secant}
    s=v-u,
    \qquad
    y=r_k(v)-r_k(u)
      =\gamma\bigl(\grad f(v)-\grad f(u)\bigr)+s.
\end{equation}
The right-hand side is independent of $z_k$.  Thus the pair $(s,y)$ can be used at every later center.  In the indexed memory notation of Section~\ref{sec:notation-parameters}, this same curvature vector is denoted by $q_i$ in the stored pair $(s_i,q_i)$.

\begin{proposition}[Exact reuse]
\label{prop:exact-reuse}
For fixed $\gamma$, \eqref{eq:residual-shift-simple} and \eqref{eq:reusable-secant} are exact.  If $f$ is convex and $s\ne0$, then
\begin{equation}
\label{eq:automatic-curvature}
    \ip{s}{y}
    =\gamma\ip{s}{\grad f(v)-\grad f(u)}+\norm{s}^2
    \ge \norm{s}^2>0.
\end{equation}
\end{proposition}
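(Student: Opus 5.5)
The argument is a direct computation from the definitions in \eqref{eq:method-prox-family}; no earlier result beyond the definition of $r_j$ and $r_k$ is needed. I would split it into three short steps: the residual shift, the center-independence of the secant, and the curvature inequality.

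\emph{Residual shift.} Take an exact-gradient point $a$ stored under center $z_j$ with $r_j(a)=\gamma\grad f(a)+a-z_j$. Substituting $\gamma\grad f(a)+a=r_j(a)+z_j$ into \eqref{eq:exact-residual-transport} gives $\bar r_k(a)=r_j(a)+z_j-z_k$. This is an algebraic identity involving only stored quantities and the two centers, so \eqref{eq:residual-shift-simple} is exact and uses no new evaluation of $\grad f$.

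\emph{Reusable secant.} For exact-gradient points $u,v$, subtract the two residuals at the same center $z_k$. The terms $-z_k$ cancel, leaving $y=\gamma(\grad f(v)-\grad f(u))+(v-u)$. This expression contains no $z_k$, so it is the same vector for every center. Together with the previous step, this proves exactness for fixed $\gamma$. The hypothesis of fixed $\gamma$ matters only because $\gamma$ multiplies $\grad f$; if $\gamma$ changed, the stored combination would have to be rescaled.

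\emph{Curvature bound.} Take the inner product of $y$ with $s=v-u$ to get $\ip{s}{y}=\gamma\ip{s}{\grad f(v)-\grad f(u)}+\norm{s}^2$. For convex differentiable $f$, the gradient is monotone. To see this, add the two first-order inequalities $f(v)\ge f(u)+\ip{\grad f(u)}{v-u}$ and $f(u)\ge f(v)+\ip{\grad f(v)}{u-v}$, which yields $\ip{v-u}{\grad f(v)-\grad f(u)}\ge0$. Since $\gamma>0$, the first term is nonnegative, so $\ip{s}{y}\ge\norm{s}^2$, and $\norm{s}^2>0$ whenever $s\ne0$.

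None of these steps is a genuine obstacle. The only point needing care is to justify monotonicity of $\grad f$ from convexity, which the two first-order inequalities above settle.
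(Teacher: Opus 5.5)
Your proof is correct and follows the paper's argument. The residual shift comes from rewriting the residual under the two centers, and the secant is center-free because subtracting two residuals at the same center cancels $z_k$. The curvature bound follows from monotonicity of $\grad f$, which you additionally derive from the two first-order convexity inequalities; the paper simply cites that monotonicity.
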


\begin{proof}
Equation~\eqref{eq:residual-shift-simple} follows by subtracting the two centers in \eqref{eq:method-prox-family}.  Equation~\eqref{eq:reusable-secant} follows by subtracting two residuals with the same center.  Monotonicity of $\grad f$ gives \eqref{eq:automatic-curvature}.
\end{proof}

\begin{remark}[Changing the proximal parameter]
If $\gamma$ changes, store the smooth pair
$y^f=\grad f(v)-\grad f(u)$ and rebuild the current proximal pair as
$y(\gamma)=\gamma y^f+s$.  The center still does not enter the pair.
\end{remark}

\subsection{The transported predictor}
\label{sec:transported-predictor}

Let $H_k$ be an inverse-Hessian model and let $B_k$ be its paired direct model.  Starting from an exact-gradient anchor $a_{k-1}$, define
\begin{equation}
\label{eq:model-rollout-initial}
    \widetilde x_{k,0}=a_{k-1},
    \qquad
    \widetilde r_{k,0}=\bar r_k(a_{k-1}).
\end{equation}
At pseudo-step $j$, set
\begin{equation}
\label{eq:model-rollout-direction}
    d_{k,j}:=-H_k\widetilde r_{k,j}.
\end{equation}
The local quadratic change predicted by $B_k$ is
\begin{equation}
\label{eq:pseudo-quadratic-model}
    \mathcal Q_{k,j}(s)
    :=\ip{\widetilde r_{k,j}}{s}
      +\frac12\ip{s}{B_ks}.
\end{equation}
We use the following closed-form damping rule:
\begin{equation}
\label{eq:predictor-eta}
\eta_{k,j}:=
\begin{cases}
\displaystyle
\min\left\{\bar\eta_k,
\nu_{\rm qn}
\frac{-\ip{\widetilde r_{k,j}}{d_{k,j}}}
     {\ip{d_{k,j}}{B_kd_{k,j}}}
\right\},
&\begin{array}{l}
 d_{k,j}\ne0,\\[-1mm]
 \ip{\widetilde r_{k,j}}{d_{k,j}}<0,\\[-1mm]
 \ip{d_{k,j}}{B_kd_{k,j}}>0,
\end{array}\\[6mm]
0, &\text{otherwise}.
\end{cases}
\end{equation}
Here $\bar\eta_k\in(0,1]$ and $\nu_{\rm qn}\in(0,2)$ are defined in Section~\ref{sec:notation-parameters}; $\nu_{\rm qn}=1$ is the reference choice.  If \eqref{eq:predictor-eta} returns zero at index $j$, the rollout stops: for every remaining index $\ell>j$ we set $\widetilde x_{k,\ell}=\widetilde x_{k,j}$ and $\widetilde r_{k,\ell}=\widetilde r_{k,j}$, so the endpoint notation $\widetilde x_{k,K}$ and $\widetilde r_{k,K}$ remains well defined.

The rule \eqref{eq:predictor-eta} is not a line search.  It uses only dot products and the action $B_kd_{k,j}$, which is already needed for the modeled-residual update.  In particular, it evaluates neither $f$ nor $\grad f$ at any intermediate pseudo-point.  Moreover, whenever $\eta_{k,j}>0$,
\begin{equation}
\label{eq:pseudo-model-decrease}
\begin{aligned}
\mathcal Q_{k,j}(\eta_{k,j}d_{k,j})
&=\eta_{k,j}\ip{\widetilde r_{k,j}}{d_{k,j}}
  +\frac{\eta_{k,j}^2}{2}\ip{d_{k,j}}{B_kd_{k,j}}\\
&\le
\eta_{k,j}\ip{\widetilde r_{k,j}}{d_{k,j}}
\left(1-\frac{\nu_{\rm qn}}{2}\right)<0.
\end{aligned}
\end{equation}
Thus every accepted pseudo-step decreases the current quadratic model.  This is only a model certificate because $\widetilde r_{k,j}$ is modeled after $j=0$.  Actual descent for $\phi_k$ is checked once, at the final point, by \eqref{eq:armijo-certificate}.

The frozen $K$-step predictor is then
\begin{align}
    \widetilde x_{k,j+1}&=\widetilde x_{k,j}+\eta_{k,j}d_{k,j},
    \label{eq:model-rollout-state}\\
    \widetilde r_{k,j+1}&=\widetilde r_{k,j}
       +B_k(\widetilde x_{k,j+1}-\widetilde x_{k,j}),
    \qquad j=0,\ldots,K-1.
    \label{eq:model-rollout-residual}
\end{align}
Only the final point
\begin{equation}
\label{eq:predictor-endpoint}
    \widehat x_k:=\widetilde x_{k,K}
\end{equation}
receives a new true gradient.  For $K=0$, the convention in Section~\ref{sec:notation-parameters} applies.

\begin{proposition}[Frozen-rollout cancellation]
\label{prop:frozen-cancellation-new}
Assume $B_kH_k=I$ and that no safeguard changes the directions in \eqref{eq:model-rollout-direction}.  Then
\begin{equation}
\label{eq:frozen-residual-cancel-new}
    \widetilde r_{k,j+1}=(1-\eta_{k,j})\widetilde r_{k,j}
\end{equation}
and
\begin{equation}
\label{eq:frozen-endpoint-new}
    \widehat x_k
    =a_{k-1}-\eta_k^{\rm eff}H_k\bar r_k(a_{k-1}),
    \qquad
    \eta_k^{\rm eff}:=1-\prod_{j=0}^{K-1}(1-\eta_{k,j}).
\end{equation}
If the models are paired exactly, then the quotient in \eqref{eq:predictor-eta} is one; hence, for $\nu_{\rm qn}=1$, every nonzero pseudo-step has $\eta_{k,j}=\bar\eta_k$.
\end{proposition}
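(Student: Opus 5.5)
The plan is a direct induction on the pseudo-step index $j$, using only the definitions \eqref{eq:model-rollout-direction}, \eqref{eq:model-rollout-state}, and \eqref{eq:model-rollout-residual}.

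\emph{Residual recursion.} Fix $j$ with $\eta_{k,j}>0$. Since no safeguard alters the direction, $\widetilde x_{k,j+1}-\widetilde x_{k,j}=\eta_{k,j}d_{k,j}=-\eta_{k,j}H_k\widetilde r_{k,j}$. Substituting into \eqref{eq:model-rollout-residual} and using $B_kH_k=I$ gives
\[
\widetilde r_{k,j+1}=\widetilde r_{k,j}-\eta_{k,j}B_kH_k\widetilde r_{k,j}=(1-\eta_{k,j})\widetilde r_{k,j},
\]
which is \eqref{eq:frozen-residual-cancel-new}. If instead $\eta_{k,j}=0$, the stopping convention freezes both the state and the residual for all later indices. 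The identity then holds trivially at $j$ and at every later index, because each of those factors is also taken to be $1$.

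\emph{Endpoint formula.} By induction, $\widetilde r_{k,j}=\pi_j\,\bar r_k(a_{k-1})$ with $\pi_j:=\prod_{i<j}(1-\eta_{k,i})$ and $\pi_0=1$. Then every direction is $d_{k,j}=-\pi_jH_k\bar r_k(a_{k-1})$, so all pseudo-steps are collinear. Summing the state updates gives
\[
\widehat x_k-a_{k-1}=-\Bigl(\sum_{j=0}^{K-1}\eta_{k,j}\pi_j\Bigr)H_k\bar r_k(a_{k-1}).
\]
The sum telescopes, because $\eta_{k,j}\pi_j=\pi_j-\pi_{j+1}$. Hence it equals $1-\pi_K=\eta_k^{\rm eff}$, which is \eqref{eq:frozen-endpoint-new}. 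For $K=0$ the product is empty, so $\eta_k^{\rm eff}=0$, matching the convention of Section~\ref{sec:notation-parameters}.

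\emph{The quotient equals one.} When $d_{k,j}\neq0$, write $d=d_{k,j}=-H_k\widetilde r_{k,j}$. Since $B_kH_k=I$, we have $\widetilde r_{k,j}=-B_kd$. This gives
\[
-\ip{\widetilde r_{k,j}}{d}=\ip{B_kd}{d}=\ip{d}{B_kd},
\]
where the last equality uses the self-adjointness of $B_k$. Positive definiteness of $B_k$ makes this quantity positive, so the first branch of \eqref{eq:predictor-eta} applies and the quotient is exactly $1$. With $\nu_{\rm qn}=1$ and $\bar\eta_k\le1$ we then get $\eta_{k,j}=\min\{\bar\eta_k,1\}=\bar\eta_k$.

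The only delicate point is the bookkeeping around the stopping rule: once some $\eta_{k,j}=0$, the later frozen indices must be treated as having factor $1$ in the product. This is consistent, because the residual recursion with $\eta=0$ is the identity. Apart from that, the argument is an algebraic verification.
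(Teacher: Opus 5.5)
Your proof is correct and follows the paper's argument: you substitute $B_kH_k=I$ into the modeled-residual update, telescope the displacements via $\eta_{k,j}\pi_j=\pi_j-\pi_{j+1}$, and use $B_kd_{k,j}=-\widetilde r_{k,j}$ to show that the damping quotient equals one. You also handle the stopping convention explicitly and check via $B_k\succ0$ that the first branch of \eqref{eq:predictor-eta} is active, details the paper leaves implicit.
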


\begin{proof}
Using $B_kH_k=I$ in \eqref{eq:model-rollout-residual} gives
$\widetilde r_{k,j+1}=(I-\eta_{k,j}B_kH_k)\widetilde r_{k,j}$, which is \eqref{eq:frozen-residual-cancel-new}.  Summing the displacements in \eqref{eq:model-rollout-state} gives the telescoping coefficient in \eqref{eq:frozen-endpoint-new}.  Finally, $B_kd_{k,j}=-\widetilde r_{k,j}$ under exact pairing, so
$\ip{d_{k,j}}{B_kd_{k,j}}=-\ip{\widetilde r_{k,j}}{d_{k,j}}$.
\end{proof}

Thus a frozen $K$-deep predictor does not create $K$ independent Newton directions.  It realizes one direction with a larger effective damping.  This identity is exact and is used in both the global and local analysis.

\subsection{Paired BFGS models}
\label{sec:bfgs-model}

Let $H\succ0$, $B=H^{-1}$, and let $(s,y)$ satisfy $\ip{s}{y}>0$.  Set
\[
  \omega:=(\ip{s}{y})^{-1},
  \qquad
  V:=I-\omega sy^\top.
\]
The inverse and direct BFGS updates are
\begin{align}
    H^+&=VHV^\top+\omega ss^\top,
    \label{eq:inverse-bfgs}\\
    B^+&=B-\frac{Bss^\top B}{\ip{s}{Bs}}
           +\frac{yy^\top}{\ip{s}{y}}.
    \label{eq:direct-bfgs}
\end{align}
They satisfy $H^+y=s$, $B^+s=y$, and $B^+=(H^+)^{-1}$.  The scalar $\omega$ is local to this update and is unrelated to the DRS contraction factor $\rho$ defined later in \eqref{eq:global-delta-rho}.

The reference memory accepts an exact pair only if
\begin{equation}
\label{eq:pair-curvature-threshold}
  \ip{s}{y}\ge
  \epsilon_{\rm curv}\norm{s}\norm{y},
  \qquad \epsilon_{\rm curv}\in[0,1).
\end{equation}
In exact arithmetic, positivity follows from \eqref{eq:automatic-curvature}; \eqref{eq:pair-curvature-threshold} is a finite-precision and conditioning safeguard.  An unaccepted pair is skipped.  Starting from the paired scalar models defined in \eqref{eq:initial-qn-scaling}, the accepted pairs in $\mathcal M_k$ are replayed in chronological order using \eqref{eq:inverse-bfgs}--\eqref{eq:direct-bfgs}.

For large problems, the inverse action is implemented by the L-BFGS two-loop recursion \cite{LiuNocedal1989,ByrdNocedalSchnabel1994}.  The matching direct action can be obtained by replaying \eqref{eq:direct-bfgs} on a vector.  The local superlinear result in Section~\ref{sec:superlinear} applies directly to full-memory BFGS and conditionally to limited-memory variants through the transported Dennis--Mor\'e condition.

\subsection{Paired safeguarded SR1 models}
\label{sec:sr1-model}

The symmetric-rank-one update uses one rank-one correction.  Define
\begin{equation}
\label{eq:sr1-u-v}
    u=s-Hy,
    \qquad
    v=y-Bs.
\end{equation}
When the denominators are nonzero,
\begin{align}
    H^+&=H+\frac{uu^\top}{\ip{u}{y}},
    \label{eq:inverse-sr1}\\
    B^+&=B+\frac{vv^\top}{\ip{v}{s}}.
    \label{eq:direct-sr1}
\end{align}
Both updates satisfy the secant equation.  If $B=H^{-1}$, they remain paired whenever both formulas are defined; this follows from the Sherman--Morrison identity.

Standard SR1 may be indefinite.  Choose $\epsilon_{\rm sr1}\in(0,1)$ and the spectral bounds $0<\underline h\le\overline h<\infty$ from Section~\ref{sec:notation-parameters}.  The certified CR-DRS realization accepts \eqref{eq:inverse-sr1}--\eqref{eq:direct-sr1} only when
\begin{equation}
\label{eq:sr1-skip}
    |\ip{u}{y}|\ge\epsilon_{\rm sr1}\norm{u}\norm{y}
\end{equation}
and
\begin{equation}
\label{eq:sr1-spectral-bounds}
   \underline h I\preceq H^+\preceq\overline h I,
   \qquad
   \overline h^{-1}I\preceq B^+\preceq\underline h^{-1}I.
\end{equation}
If either test fails, the reference algorithm skips the pair; a restart to the scalar models in \eqref{eq:initial-qn-scaling} is an equivalent safe option.  These tests keep the rank-one curvature model positive definite and its search directions descending.

\begin{proposition}[Paired update property]
\label{prop:paired-update-property}
Suppose $B=H^{-1}$.  BFGS updates \eqref{eq:inverse-bfgs}--\eqref{eq:direct-bfgs} preserve $B^+=(H^+)^{-1}$.  SR1 updates \eqref{eq:inverse-sr1}--\eqref{eq:direct-sr1} also preserve this identity whenever their denominators are nonzero.
\end{proposition}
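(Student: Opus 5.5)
The plan is a direct verification: check that $B^+H^+=I$ for each pair of formulas, using only $BH=I$, symmetry, and the scalar denominators, rather than invoking a general inversion lemma. Throughout, $H$ and $B$ are self-adjoint, and both denominators are nonzero scalars ($\ip{s}{y}>0$ for BFGS; for SR1, both $\ip{u}{y}$ and $\ip{v}{s}$ are nonzero by hypothesis).

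\textbf{BFGS.} Expand $B^+H^+$ with $H^+=VHV^\top+\omega ss^\top$ and $V=I-\omega sy^\top$. Use the identities $V^\top s=s-\omega y\,\ip{s}{s}$... rather than that route, the cleaner one is the following.
\begin{enumerate}[label=(\alph*),leftmargin=2.2em]
\item Since $B^+s=y$ and $B^+$ is symmetric, verify $B^+H^+=I$ on the decomposition $\Hcal=\operatorname{span}\{y\}\oplus y^{\perp}$.
\item For $w\in y^\perp$: $V^\top w=w-\omega y\,\ip{s}{w}$, and a direct computation gives $H^+w=V H V^\top w+\omega s\ip{s}{w}$. Apply $B^+$ and use $BH=I$ and $\ip{y}{w}=0$. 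The terms $Bss^\top B/\ip{s}{Bs}$ and $yy^\top/\ip{s}{y}$ are expected to cancel the cross terms, leaving $w$.
\item For $w=y$: use $H^+y=s$ and $B^+s=y$, so $B^+H^+y=y$.
\end{enumerate}
The $y^\perp$ case carries the bookkeeping. A cleaner alternative I would try first is the known factorization: the direct BFGS formula is the Sherman--Morrison--Woodbury inverse of the inverse formula. Write $H^+=H+UCU^\top$ with $U=[s,\;Hy]$ and $C$ a $2\times2$ symmetric matrix whose entries are polynomial in $\omega$ and $\ip{y}{Hy}$. Woodbury then gives $(H^+)^{-1}=B-BU(C^{-1}+U^\top BU)^{-1}U^\top B$ with $BU=[Bs,\;y]$. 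The remaining step is to invert a $2\times2$ matrix and compare the result with $-Bss^\top B/\ip{s}{Bs}+yy^\top/\ip{s}{y}$.

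\textbf{SR1.} Apply Sherman--Morrison directly. With $B=H^{-1}$,
\[
v=y-Bs=B(Hy-s)=-Bu, \qquad \ip{v}{s}=-\ip{Bu}{s}=-\ip{u}{Bs}.
\]
Sherman--Morrison applied to $H^+=H+uu^\top/\ip{u}{y}$ gives
\[
(H^+)^{-1}=B-\frac{Buu^\top B}{\ip{u}{y}+\ip{u}{Bu}}.
\]
Here $Buu^\top B=vv^\top$, and
\[
\ip{u}{y}+\ip{u}{Bu}=\ip{u}{y+Bs-y}=\ip{u}{Bs}=-\ip{v}{s},
\]
because $Bu=Bs-y$. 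Substituting gives exactly $B^+=B+vv^\top/\ip{v}{s}$. Nonvanishing of $\ip{v}{s}$ is precisely the Sherman--Morrison invertibility condition, so the inverse exists whenever both formulas are defined.

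The main obstacle is the $2\times2$ algebra in the BFGS Woodbury computation. It is routine, but sign and scaling errors are easy to make there. To keep it tractable I would normalize with $\beta:=\ip{y}{Hy}$, where $\ip{s}{BHy}=\ip{s}{y}=\omega^{-1}$.
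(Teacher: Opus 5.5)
Your SR1 argument is exactly the paper's proof: Sherman--Morrison applied to $H^+=H+uu^\top/\ip{u}{y}$, with $Bu=Bs-y=-v$ and $\ip{u}{y}+\ip{u}{Bu}=\ip{u}{Bs}=-\ip{v}{s}$. For BFGS the paper only cites the standard inverse/direct equivalence, and your Woodbury route is the usual derivation of it, so the approach matches.

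The one thing missing is that you finish neither BFGS route. Step (b) says the terms are ``expected'' to cancel, and the Woodbury route stops before the $2\times2$ inversion. That inversion, which you flag as the main obstacle, is in fact trivial. Write $\mu:=\ip{y}{Hy}$ (your $\beta$, renamed to avoid a clash with the smoothness constant). Expanding $VHV^\top$ gives $H^+=H+UCU^\top$ with $U=[s,\ Hy]$ and
\[
C=\begin{pmatrix}\omega+\omega^2\mu & -\omega\\ -\omega & 0\end{pmatrix},
\qquad
C^{-1}=\begin{pmatrix}0 & -\omega^{-1}\\ -\omega^{-1} & -\omega^{-1}-\mu\end{pmatrix},
\qquad
U^\top BU=\begin{pmatrix}\ip{s}{Bs} & \omega^{-1}\\ \omega^{-1} & \mu\end{pmatrix}.
\]
Hence the capacitance matrix is diagonal, $C^{-1}+U^\top BU=\diag(\ip{s}{Bs},-\ip{s}{y})$. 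Woodbury with $BU=[Bs,\ y]$ then returns exactly $B-Bss^\top B/\ip{s}{Bs}+yy^\top/\ip{s}{y}=B^+$. Invertibility needs only $\ip{s}{y}\ne0$ and $\ip{s}{Bs}>0$.

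If you prefer the decomposition route, note two things. First, the full expansion of $B^+H^+w$ never uses $\ip{y}{w}=0$; the cross terms cancel for every $w$. Second, splitting along $\operatorname{span}\{y\}\oplus s^\perp$ instead shortens it to two lines. For $w\perp s$ you have $V^\top w=w$ and $ss^\top w=0$, so $H^+w=Hw-\omega\ip{y}{Hw}\,s$. Since $\ip{Bs}{Hw}=\ip{s}{w}=0$, this gives $B^+H^+w=w+\omega\ip{y}{Hw}\,y-\omega\ip{y}{Hw}\,y=w$.
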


\begin{proof}
The BFGS statement is the standard inverse/direct equivalence.  For SR1, apply Sherman--Morrison to $H^+=H+uu^\top/\ip{u}{y}$.  Since $Bu=Bs-y=-v$ and
$\ip{u}{y}+\ip{u}{Bu}=-\ip{v}{s}$, the resulting inverse is exactly \eqref{eq:direct-sr1}.
\end{proof}

Therefore the exact-arithmetic analysis below uses $H_kB_k=I$ as an identity of the algorithm, not as an asymptotic hypothesis.  Any finite-precision loss of pairing is handled as an implementation diagnostic in \eqref{eq:BH-consistency}.

\subsection{Certified fixed-budget CR-DRS}
\label{sec:certified-algorithm}

For the convergence theorem, assume that $f$ is $\sigma$-strongly convex and $\beta$-smooth.  Then every $\phi_k$ is
\begin{equation}
\label{eq:m-L-prox}
    m\text{-strongly convex and }L\text{-smooth},
    \qquad
    m:=1+\gamma\sigma,
    \quad
    L:=1+\gamma\beta.
\end{equation}
Choose $c_A\in(0,1/2)$, $\theta_Q\in(0,1)$, and an integer $b\ge1$.  A quasi-Newton candidate is accepted if
\begin{align}
    \phi_k(\widehat x_k)
    &\le \phi_k(a_{k-1})
      +c_A\ip{\bar r_k(a_{k-1})}{\widehat x_k-a_{k-1}},
    \label{eq:armijo-certificate}\\
    \norm{r_k(\widehat x_k)}
    &\le \frac{m}{L}\theta_Q
       \norm{\bar r_k(a_{k-1})}.
    \label{eq:residual-certificate}
\end{align}
The candidate is considered only when
\begin{equation}
\label{eq:descent-direction-certificate}
    \ip{\bar r_k(a_{k-1})}{\widehat x_k-a_{k-1}}<0.
\end{equation}
The direction test and the first inequality certify descent.  The second inequality certifies contraction of the unknown proximal-point error.  These are one-shot endpoint tests, not a line search: the stored value $f(a_{k-1})$ gives $\phi_k(a_{k-1})$, and the single value-gradient evaluation at $\widehat x_k$ gives every other quantity in \eqref{eq:armijo-certificate}--\eqref{eq:residual-certificate}.

If any of the three tests fails, the method discards the candidate from the curvature memory and applies $b$ safe gradient steps to the current proximal objective:
\begin{equation}
\label{eq:safe-gradient-step}
    u_{j+1}=u_j-\tau_G r_k(u_j),
    \qquad
    \tau_G:=\frac{2}{m+L},
    \qquad
    u_0=a_{k-1}.
\end{equation}
The residual at $u_0$ is known from transport.  Each new $u_{j+1}$ receives one gradient.  Thus an accepted quasi-Newton candidate costs one new gradient; a rejected candidate followed by the fallback costs at most $b+1$.

\begin{algorithm}[t]
\caption{Certified CR-DRS with BFGS or safeguarded SR1}
\label{alg:certified-crdrs}
\begin{algorithmic}[1]
\REQUIRE $z_0$; exact anchor $(a_{-1},f(a_{-1}),\grad f(a_{-1}))$; initial memory $\mathcal M_0$; update type (BFGS or SR1); fixed parameters $K,b,m_q,\gamma,\alpha,c_A,\theta_Q,\nu_{\rm qn},\bar\eta,\epsilon_{\rm curv},\epsilon_{\rm sr1},\underline h,\overline h,\vartheta_0$ from Section~\ref{sec:notation-parameters}.
\FOR{$k=0,1,2,\ldots$}
\STATE Transport $\bar r_k=\gamma\grad f(a_{k-1})+a_{k-1}-z_k$ and set the reference cap $\bar\eta_k\leftarrow\bar\eta$.
\STATE Build paired $(B_k,H_k)$ from $\mathcal M_k$ by the selected BFGS or safeguarded SR1 update.
\STATE Generate $\widehat x_k$ by \eqref{eq:model-rollout-initial}--\eqref{eq:predictor-endpoint}.
\STATE Evaluate $f(\widehat x_k)$ and $\grad f(\widehat x_k)$; form $r_k(\widehat x_k)$.
\IF{\eqref{eq:descent-direction-certificate}--\eqref{eq:residual-certificate} hold}
    \STATE Set $x_k=\widehat x_k$ and keep its exact value and gradient.
\ELSE
    \STATE Starting from $u_0=a_{k-1}$, apply $b$ steps of \eqref{eq:safe-gradient-step}, evaluating one new gradient at each endpoint; set $x_k=u_b$.
\ENDIF
\STATE Set $\mathcal M_{k+1}\leftarrow\mathcal M_k$ and form the exact pair
$s_k=x_k-a_{k-1}$,
$q_k=r_k(x_k)-\bar r_k$.  Append $(s_k,q_k)$ only if \eqref{eq:pair-curvature-threshold} and the chosen BFGS or SR1 safeguards hold; in the limited-memory case, discard the oldest pair if the memory would exceed $m_q$.
\STATE Set $y_k^{g}=P_{\gamma g}(2x_k-z_k)$ and
$z_{k+1}=z_k+2\alpha(y_k^{g}-x_k)$.
\STATE Set the next exact anchor $a_k=x_k$.
\ENDFOR
\end{algorithmic}
\end{algorithm}

Only exact-gradient endpoints enter $\mathcal M_k$.  The failed model candidate is not inserted.  This rule prevents the quasi-Newton memory from learning its own synthetic curvature.

\section{Global convergence with a fixed inner budget}
\label{sec:global}

We state the main result first.  Its proof is then split into three short estimates.

\begin{theorem}[Global linear convergence of certified CR-DRS]
\label{thm:global-certified-crdrs}
Assume that $f$ is $\sigma$-strongly convex and $\beta$-smooth, with $0<\sigma\le\beta$, and that $g$ is proper, closed, and convex.  Fix $\gamma>0$ and let
\begin{equation}
\label{eq:global-delta-rho}
    \delta:=\max\left\{
      \frac{\gamma\beta-1}{\gamma\beta+1},
      \frac{1-\gamma\sigma}{1+\gamma\sigma}
    \right\},
    \qquad
    \rho:=|1-\alpha|+\alpha\delta<1.
\end{equation}
Run Algorithm~\ref{alg:certified-crdrs}.  Define
\begin{equation}
\label{eq:inner-theta}
    \theta_G:=\frac{L-m}{L+m},
    \qquad
    \theta:=\max\{\theta_Q,\theta_G^b\},
    \qquad
    L_p:=\frac1m,
    \qquad
    c:=2\alpha,
\end{equation}
where $m,L$ are given by \eqref{eq:m-L-prox}.  If
\begin{equation}
\label{eq:small-gain-scalar}
    \theta
    <\frac{1-\rho}
    {(1-\rho)(1+cL_p)+cL_p(1+\rho)},
\end{equation}
then the DRS state $z_k$ converges $R$-linearly to the unique fixed point $z_\star$, and the approximate proximal points $x_k$ converge $R$-linearly to the unique minimizer
$x_\star=P_{\gamma f}z_\star$ of \eqref{eq:main-problem}.

More precisely, with
\begin{equation}
\label{eq:global-errors}
    A_k:=\norm{z_k-z_\star},
    \qquad
    E_k:=\norm{x_k-P_{\gamma f}z_k},
\end{equation}
we have the componentwise recursion
\begin{equation}
\label{eq:global-matrix-recursion}
    \begin{bmatrix}A_{k+1}\\E_{k+1}\end{bmatrix}
    \le
    M
    \begin{bmatrix}A_k\\E_k\end{bmatrix},
    \qquad
    M:=
    \begin{bmatrix}
       \rho & c\\
       \theta L_p(1+\rho) & \theta(1+cL_p)
    \end{bmatrix},
\end{equation}
where \eqref{eq:small-gain-scalar} is equivalent to $\varrho(M)<1$.
\end{theorem}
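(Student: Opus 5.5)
The plan is to follow the three-part split announced before the theorem: an inner contraction for the certified proximal solve, an outer perturbation bound for the inexact DRS step, and a nonnegative two-state recursion whose spectral radius is analyzed explicitly.

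\emph{Inner contraction.} At outer step $k+1$ the inner call starts from the anchor $a_k=x_k$ and targets $p_{k+1}=P_{\gamma f}z_{k+1}$. By \eqref{eq:m-L-prox}, $\phi_{k+1}$ is $m$-strongly convex and $L$-smooth, so for every $x$
\[
m\norm{x-p_{k+1}}\le\norm{r_{k+1}(x)}\le L\norm{x-p_{k+1}}.
\]
\begin{itemize}
\item If the candidate is accepted, the residual certificate \eqref{eq:residual-certificate} gives $\norm{x_{k+1}-p_{k+1}}\le \theta_Q\norm{x_k-p_{k+1}}$. Only this certificate is needed; the Armijo and direction tests play no role in the estimate.
\item If the candidate is rejected, each step of \eqref{eq:safe-gradient-step} with $\tau_G=2/(m+L)$ contracts the distance to $p_{k+1}$ by $\theta_G$. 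This is the standard gradient-descent bound for $m$-strongly convex, $L$-smooth functions. Hence $b$ steps give the factor $\theta_G^b$.
\end{itemize}
In both cases $\norm{x_{k+1}-p_{k+1}}\le\theta\norm{x_k-p_{k+1}}$. By the triangle inequality,
\[
E_{k+1}\le \theta\bigl(E_k+\norm{P_{\gamma f}z_{k+1}-P_{\gamma f}z_k}\bigr).
\]
The resolvent of the $(1+\gamma\sigma)$-strongly monotone operator $I+\gamma\grad f$ is $1/m$-Lipschitz (Appendix~\ref{app:proximal-facts}). Together with $\norm{z_{k+1}-z_k}\le A_{k+1}+A_k$, this yields
\[
E_{k+1}\le\theta\bigl(E_k+L_p(A_{k+1}+A_k)\bigr).
\]

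\emph{Outer perturbation.} Let $T$ denote the exact relaxed DRS map. Write
\[
z_{k+1}=Tz_k+2\alpha\bigl[(P_{\gamma g}(2x_k-z_k)-x_k)-(P_{\gamma g}(2p_k-z_k)-p_k)\bigr].
\]
Since $P_{\gamma g}(2x-z)-x=\tfrac12\bigl(R_{\gamma g}(2x-z)-z\bigr)$ and $R_{\gamma g}$ is nonexpansive, the bracket has norm at most $\tfrac12\cdot2\norm{x_k-p_k}=E_k$. The Giselsson--Boyd bound recalled in Appendix~\ref{app:linear-drs} gives $\norm{Tz_k-z_\star}\le\rho A_k$. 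Hence
\[
A_{k+1}\le\rho A_k+cE_k,
\]
which is the first row of \eqref{eq:global-matrix-recursion}. Substituting this into the inner bound gives
\[
E_{k+1}\le\theta L_p(1+\rho)A_k+\theta(1+cL_p)E_k,
\]
which is the second row.

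\emph{Spectral radius and conclusion.} The matrix $M$ is entrywise nonnegative and $\rho<1$. For such $2\times2$ matrices, $\varrho(M)<1$ holds iff $M_{22}<1$ and $(1-M_{11})(1-M_{22})>M_{12}M_{21}$. The second condition reads
\[
(1-\rho)\bigl(1-\theta(1+cL_p)\bigr)>c\theta L_p(1+\rho),
\]
which rearranges exactly to \eqref{eq:small-gain-scalar}, and it already forces $M_{22}<1$; this gives the claimed equivalence. Because $M\ge0$, the componentwise recursion iterates to $(A_k,E_k)^\top\le M^k(A_0,E_0)^\top$. Therefore $A_k,E_k\le C q^k$ for any $q\in(\varrho(M),1)$. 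Finally,
\[
\norm{x_k-x_\star}\le E_k+\norm{P_{\gamma f}z_k-P_{\gamma f}z_\star}\le E_k+L_pA_k,
\]
which gives $R$-linear convergence of $x_k$. The identification of $x_\star=P_{\gamma f}z_\star$ as the unique minimizer comes from the exact DRS theory in Appendix~\ref{app:exact-drs}.

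I expect the outer perturbation constant to be the main point needing care. One must express the error through the reflected resolvent to obtain the sharp coefficient $c=2\alpha$ rather than $2\alpha\cdot 2$. One must also keep the index shift straight: the inner call at step $k+1$ uses $z_{k+1}$, which is why $A_{k+1}$ has to be eliminated via the first row.
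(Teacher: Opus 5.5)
Your proof is correct and takes essentially the same route as the paper. The steps match: the certified inner contraction (residual certificate, or $b$ gradient steps with factor $\theta_G$); the outer bound $\norm{z_{k+1}-Tz_k}\le 2\alpha E_k$ from nonexpansiveness of $R_{\gamma g}$; elimination of $A_{k+1}$ in the tracking bound; and the equivalence of \eqref{eq:small-gain-scalar} with $\det(I-M)>0$ given $\rho<1$. The only slip is a citation: the $1/m$-Lipschitz bound on $P_{\gamma f}$ is \eqref{eq:strong-prox-Lipschitz} in Appendix~\ref{app:linear-drs}, not Appendix~\ref{app:proximal-facts}.
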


\subsection{Proof step 1: the inner call is contractive}

\begin{lemma}[Certified inner contraction]
\label{lem:certified-inner-contraction}
Let $p_k=P_{\gamma f}z_k$.  One call of the certified inner routine satisfies
\begin{equation}
\label{eq:inner-contraction-result}
    \norm{x_k-p_k}
    \le\theta\norm{a_{k-1}-p_k},
\end{equation}
with $\theta$ from \eqref{eq:inner-theta}.
\end{lemma}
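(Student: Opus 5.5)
The proof splits into two cases according to the branch taken by Algorithm~\ref{alg:certified-crdrs}. In both cases the tool is the same: because $\phi_k$ is $m$-strongly convex and $L$-smooth by \eqref{eq:m-L-prox}, and $r_k=\grad\phi_k$ vanishes at $p_k$, the residual and the proximal error bound each other:
\[
  m\norm{x-p_k}\le\norm{r_k(x)}\le L\norm{x-p_k}
  \qquad\text{for every }x.
\]
The left inequality is strong monotonicity of $r_k$ combined with Cauchy--Schwarz. The right inequality is the Lipschitz continuity of $r_k$. We also need that the transported residual is the true current residual at the anchor, $\bar r_k(a_{k-1})=r_k(a_{k-1})$. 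This is exact by Proposition~\ref{prop:exact-reuse}.

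\emph{Accepted branch.} Here $x_k=\widehat x_k$ and the residual certificate \eqref{eq:residual-certificate} holds. Applying the lower bound at $\widehat x_k$, then the certificate, then the upper bound at $a_{k-1}$ gives
\[
  \norm{x_k-p_k}
  \le\frac1m\norm{r_k(\widehat x_k)}
  \le\frac{\theta_Q}{L}\norm{\bar r_k(a_{k-1})}
  \le\theta_Q\norm{a_{k-1}-p_k}.
\]
This is where the factor $m/L$ in \eqref{eq:residual-certificate} comes from: it exactly absorbs the conditioning loss from passing between residual and error. The Armijo and direction tests are not needed for this inequality; they only serve the descent certificate.

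\emph{Fallback branch.} Here $x_k=u_b$, obtained from $u_0=a_{k-1}$ by $b$ steps of \eqref{eq:safe-gradient-step}. I would use the classical estimate that one gradient step with step size $2/(m+L)$ on an $m$-strongly convex, $L$-smooth function contracts the distance to its minimizer by $\theta_G=(L-m)/(L+m)$. To verify it, write
\[
  u_{j+1}-p_k=\Bigl(I-\tau_G\int_0^1\grad^2\phi_k\bigl(p_k+t(u_j-p_k)\bigr)\,dt\Bigr)(u_j-p_k).
\]
The averaged Hessian is symmetric with spectrum in $[m,L]$, because $f$ is $C^2$. Hence the operator in parentheses has norm at most $\max\{|1-\tau_G m|,|1-\tau_G L|\}=\theta_G$. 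Iterating $b$ times gives $\norm{u_b-p_k}\le\theta_G^b\norm{a_{k-1}-p_k}$.

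Combining the two branches, the bound holds with $\theta=\max\{\theta_Q,\theta_G^b\}$ in every case. This step is the only one with any technical content. It uses twice differentiability, which the paper assumes from the start. Without it, I would fall back on co-coercivity of $\grad\phi_k-mI$, which gives the same factor. The rest is bookkeeping: the case split is exhaustive, because whenever any of \eqref{eq:descent-direction-certificate}--\eqref{eq:residual-certificate} fails the algorithm takes the fallback branch.
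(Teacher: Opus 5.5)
Your proof is correct. The accepted branch is the same chain as the paper's:
$\norm{\widehat x_k-p_k}\le m^{-1}\norm{r_k(\widehat x_k)}\le(\theta_Q/L)\norm{r_k(a_{k-1})}\le\theta_Q\norm{a_{k-1}-p_k}$.
The routes differ only in the fallback branch.

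You linearize each gradient step through the averaged Hessian $\int_0^1\grad^2\phi_k(p_k+t(u_j-p_k))\,dt$, whose spectrum lies in $[m,L]$. You then bound the norm of the symmetric operator $I-\tau_G$ times this Hessian by $\theta_G$.

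The paper instead expands $\norm{u^+-p_k}^2$ and applies the sector inequality of Lemma~\ref{lem:sector} to $r_k=\grad\phi_k$, with $s=u-p_k$ and $y=r_k(u)$ since $r_k(p_k)=0$. This yields the factor $1-4mL/(m+L)^2=\theta_G^2$ directly.

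Your route is more transparent. It shows $\theta_G$ as the norm of the linearized iteration map and makes visible why $\tau_G=2/(m+L)$ balances the two spectral endpoints. It relies on $f\in C^2$ with pointwise bounds $\sigma I\preceq\grad^2 f\preceq\beta I$, which the paper does assume.

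The paper's route is purely first-order: it needs only $m$-strong convexity and $L$-smoothness of $\phi_k$. It also reuses the Baillon--Haddad machinery already set up for the exact DRS contraction in the appendix. It is exactly the co-coercivity alternative you mention.

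The paper additionally records a descent-lemma decrease for each fallback step. That is a side certificate and is not needed for the contraction bound.
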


\begin{proof}
If the quasi-Newton candidate is accepted, strong convexity and smoothness give
\[
    \norm{\widehat x_k-p_k}
    \le\frac1m\norm{r_k(\widehat x_k)}
    \le\frac{\theta_Q}{L}\norm{r_k(a_{k-1})}
    \le\theta_Q\norm{a_{k-1}-p_k}.
\]
If the candidate is rejected, apply the sector inequality from Lemma~\ref{lem:sector} to $r_k=\grad\phi_k$.  For one step $u^+=u-2r_k(u)/(m+L)$,
\[
\begin{aligned}
    \norm{u^+-p_k}^2
    &=\norm{u-p_k}^2
      -\frac{4}{m+L}\ip{u-p_k}{r_k(u)}
      +\frac{4}{(m+L)^2}\norm{r_k(u)}^2\\
    &\le\left(\frac{L-m}{L+m}\right)^2\norm{u-p_k}^2.
\end{aligned}
\]
After $b$ steps the factor is $\theta_G^b$.  Taking the larger of the two cases proves \eqref{eq:inner-contraction-result}.  The descent lemma also gives
\[
    \phi_k(u^+)\le\phi_k(u)
       -\frac{2m}{(m+L)^2}\norm{r_k(u)}^2,
\]
so every fallback step has a computable strict-descent certificate unless $u=p_k$.
\end{proof}

\subsection{Proof step 2: an inexact smooth prox perturbs one DRS step linearly}

\begin{lemma}[Outer perturbation]
\label{lem:outer-perturbation-focused}
Let $p_k=P_{\gamma f}z_k$ and write $x_k=p_k+e_k$.  If the $g$-proximal map is evaluated exactly, then
\begin{equation}
\label{eq:outer-perturb-focused}
    \norm{z_{k+1}-Tz_k}\le 2\alpha\norm{e_k},
\end{equation}
where $T=(1-\alpha)I+\alpha R_{\gamma g}R_{\gamma f}$ is the exact DRS map.  Consequently,
\begin{equation}
\label{eq:outer-error-focused}
    A_{k+1}\le\rho A_k+cE_k.
\end{equation}
\end{lemma}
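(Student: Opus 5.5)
The plan is to write both the perturbed step and the exact step through reflected proximal maps and compare them term by term.

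\textbf{Perturbed step in reflected form.} Since $y_k^g=P_{\gamma g}(2x_k-z_k)$, we have $2y_k^g-(2x_k-z_k)=R_{\gamma g}(2x_k-z_k)$. Hence
\[
  y_k^g-x_k=\tfrac12\bigl(R_{\gamma g}(2x_k-z_k)-(2x_k-z_k)\bigr)+\tfrac12\bigl((2x_k-z_k)-2x_k+z_k\bigr)
  =\tfrac12\bigl(R_{\gamma g}(2x_k-z_k)-z_k\bigr),
\]
because the second bracket is $0$. Therefore
\[
  z_{k+1}=z_k+\alpha\bigl(R_{\gamma g}(2x_k-z_k)-z_k\bigr)=(1-\alpha)z_k+\alpha R_{\gamma g}(2x_k-z_k).
\]

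\textbf{Exact step in the same form.} The identical computation with $p_k$ in place of $x_k$ gives
\[
  Tz_k=(1-\alpha)z_k+\alpha R_{\gamma g}(R_{\gamma f}z_k),
\]
since $R_{\gamma f}z_k=2p_k-z_k$.

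\textbf{The perturbation bound \eqref{eq:outer-perturb-focused}.} Subtracting the two expressions, the $(1-\alpha)z_k$ terms cancel:
\[
  z_{k+1}-Tz_k=\alpha\bigl(R_{\gamma g}(2x_k-z_k)-R_{\gamma g}(2p_k-z_k)\bigr).
\]
The reflected resolvent $R_{\gamma g}$ of a proper closed convex function is nonexpansive (Appendix~\ref{app:proximal-facts}). The two arguments differ by $2(x_k-p_k)=2e_k$, so
\[
  \norm{z_{k+1}-Tz_k}\le 2\alpha\norm{e_k}.
\]

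\textbf{The error recursion \eqref{eq:outer-error-focused}.} Under the Giselsson--Boyd assumptions, $R_{\gamma f}$ is $\delta$-contractive, with $\delta$ as in \eqref{eq:global-delta-rho}. This is the linear-rate fact recalled in Appendix~\ref{app:linear-drs}. Since $R_{\gamma g}$ is nonexpansive, the composition $R_{\gamma g}R_{\gamma f}$ is $\delta$-Lipschitz. Then $T$ is Lipschitz with constant $|1-\alpha|+\alpha\delta=\rho$, and $z_\star=Tz_\star$. By the triangle inequality,
\[
  A_{k+1}\le\norm{z_{k+1}-Tz_k}+\norm{Tz_k-Tz_\star}\le cE_k+\rho A_k,
\]
using $E_k=\norm{e_k}$ and $c=2\alpha$.

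\textbf{Main obstacle.} The algebra is routine. The only substantive ingredient is the contraction factor $\delta$ for $R_{\gamma f}$, which follows from $\sigma$-strong convexity and $\beta$-smoothness. I would cite it from Appendix~\ref{app:linear-drs} rather than reprove it.
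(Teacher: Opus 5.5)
Your proof is correct and follows the paper's argument: write both the perturbed and exact steps through $R_{\gamma g}$, use its nonexpansiveness on arguments differing by $2e_k$, then combine with the $\rho$-Lipschitz bound for $T$ and the triangle inequality. There is one small algebraic slip: in your first display the first bracket should be $R_{\gamma g}(2x_k-z_k)-z_k$, because as written the middle expression equals $y_k^g-(2x_k-z_k)$ rather than $y_k^g-x_k$; the final identity $y_k^g-x_k=\tfrac12\bigl(R_{\gamma g}(2x_k-z_k)-z_k\bigr)$ is nevertheless right, since $P_{\gamma g}w=\tfrac12(R_{\gamma g}w+w)$.
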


\begin{proof}
The approximate reflected $f$-proximal output differs from the exact one by $2e_k$.  Since $R_{\gamma g}$ is nonexpansive by Lemma~\ref{lem:reflected-nonexpansive}, the relaxed outer map changes by at most $2\alpha\norm{e_k}$.  Combining this with the contraction estimate
$\norm{Tz_k-Tz_\star}\le\rho\norm{z_k-z_\star}$ from Theorem~\ref{thm:exact-linear-DR} gives \eqref{eq:outer-error-focused}.
\end{proof}

\subsection{Proof step 3: the moving proximal point gives a two-state recursion}

\begin{lemma}[Tracking recursion]
\label{lem:tracking-recursion-focused}
Under the assumptions of Theorem~\ref{thm:global-certified-crdrs},
\begin{equation}
\label{eq:tracking-error-focused}
    E_{k+1}
    \le \theta L_p(1+\rho)A_k
       +\theta(1+cL_p)E_k.
\end{equation}
\end{lemma}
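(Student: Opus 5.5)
The plan is to apply the inner contraction at outer step $k+1$ and then bound the distance from the new anchor to the new proximal point by a triangle inequality. Since $a_k=x_k$, Lemma~\ref{lem:certified-inner-contraction} at index $k+1$ gives
\[
  E_{k+1}=\norm{x_{k+1}-p_{k+1}}\le\theta\norm{x_k-p_{k+1}}.
\]
Splitting through $p_k$ then gives
\[
  \norm{x_k-p_{k+1}}\le\norm{x_k-p_k}+\norm{p_k-p_{k+1}}=E_k+\norm{P_{\gamma f}z_k-P_{\gamma f}z_{k+1}}.
\]

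The second term is controlled by Lipschitz continuity of $P_{\gamma f}$. Because $\phi$ is $m$-strongly convex with $m=1+\gamma\sigma$, the resolvent $(I+\gamma\grad f)^{-1}$ is $1/m$-Lipschitz. Indeed, if $p=P_{\gamma f}z$ and $p'=P_{\gamma f}z'$, then $z-z'=(p-p')+\gamma(\grad f(p)-\grad f(p'))$. Pairing this identity with $p-p'$ and using strong monotonicity gives $m\norm{p-p'}^2\le\norm{z-z'}\norm{p-p'}$. Hence $\norm{p_k-p_{k+1}}\le L_p\norm{z_{k+1}-z_k}$, which is a standard proximal fact I may cite from Appendix~\ref{app:proximal-facts}.

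Next I bound $\norm{z_{k+1}-z_k}\le A_{k+1}+A_k$. By Lemma~\ref{lem:outer-perturbation-focused}, $A_{k+1}\le\rho A_k+cE_k$, so
\[
  \norm{z_{k+1}-z_k}\le(1+\rho)A_k+cE_k.
\]
Combining the three estimates yields
\[
  E_{k+1}\le\theta\bigl(E_k+L_p(1+\rho)A_k+L_pcE_k\bigr)=\theta L_p(1+\rho)A_k+\theta(1+cL_p)E_k,
\]
which is \eqref{eq:tracking-error-focused}.

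The one point needing care is that Lemma~\ref{lem:certified-inner-contraction} is applied with the transported anchor $a_k=x_k$ against the new center $z_{k+1}$. This is valid because that lemma uses only the exact transported residual $\bar r_{k+1}(a_k)=r_{k+1}(a_k)$, which Proposition~\ref{prop:exact-reuse} guarantees is exact. Everything else is a routine triangle-inequality chain.
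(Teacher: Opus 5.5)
Your proposal is correct and follows the paper's proof step for step. Like the paper, you apply the inner contraction at index $k+1$ with anchor $a_k=x_k$, pass through $p_k$ by the triangle inequality, use the $L_p=1/m$ Lipschitz bound on $P_{\gamma f}$ and $\norm{z_{k+1}-z_k}\le A_{k+1}+A_k$, and finally insert Lemma~\ref{lem:outer-perturbation-focused}. The one slip is a citation: the $1/m$-Lipschitz estimate you rederive is \eqref{eq:strong-prox-Lipschitz} in Appendix~\ref{app:linear-drs}, not Appendix~\ref{app:proximal-facts}, and this is harmless because you prove it directly.
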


\begin{proof}
The exact proximal map is $L_p$-Lipschitz by \eqref{eq:strong-prox-Lipschitz}.  Apply Lemma~\ref{lem:certified-inner-contraction} to the next call:
\[
\begin{aligned}
    E_{k+1}
    &\le\theta\norm{x_k-P_{\gamma f}z_{k+1}}\\
    &\le\theta E_k+\theta L_p\norm{z_{k+1}-z_k}\\
    &\le\theta E_k+\theta L_p(A_{k+1}+A_k).
\end{aligned}
\]
Insert \eqref{eq:outer-error-focused} to obtain \eqref{eq:tracking-error-focused}.
\end{proof}

\begin{proof}[Proof of Theorem~\ref{thm:global-certified-crdrs}]
Lemmas~\ref{lem:outer-perturbation-focused} and~\ref{lem:tracking-recursion-focused} give \eqref{eq:global-matrix-recursion}.
The matrix $M$ is nonnegative.  Since $\rho<1$, the condition $\varrho(M)<1$ is equivalent here to
\[
    \det(I-M)
    =(1-\rho)\bigl[1-\theta(1+cL_p)\bigr]
      -c\theta L_p(1+\rho)>0,
\]
which is exactly \eqref{eq:small-gain-scalar}.  Hence $M^k\to0$ linearly.  The convergence of $A_k$ and $E_k$ follows.  Finally,
\[
    \norm{x_k-x_\star}
    \le E_k+\norm{P_{\gamma f}z_k-P_{\gamma f}z_\star}
    \le E_k+L_pA_k.
\]
\end{proof}

\begin{corollary}[Gradient count]
\label{cor:global-gradient-count}
Under Theorem~\ref{thm:global-certified-crdrs}, there are constants $C>0$ and $\widehat\rho\in(\varrho(M),1)$ such that
\[
    A_k+E_k\le C\widehat\rho^k.
\]
For a prescribed target $\varepsilon>0$, to reach $A_k+E_k\le\varepsilon$, Algorithm~\ref{alg:certified-crdrs} uses at most
\begin{equation}
\label{eq:fixed-gradient-count}
    (b+1)\left\lceil
    \frac{\log(C/\varepsilon)}{\log(1/\widehat\rho)}
    \right\rceil+O(1)
\end{equation}
new smooth-gradient evaluations.  If every quasi-Newton candidate is accepted, the factor $b+1$ is replaced by $1$.
\end{corollary}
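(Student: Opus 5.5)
The argument has two parts: a geometric decay bound for the error vector from the recursion \eqref{eq:global-matrix-recursion}, and a count of the gradients spent per outer iteration.

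\emph{Decay bound.} Set $w_k:=(A_k,E_k)^\top\ge0$. Since $M$ is entrywise nonnegative, the componentwise inequality $w_{k+1}\le Mw_k$ can be iterated, giving $w_k\le M^kw_0$.

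Fix any $\widehat\rho\in(\varrho(M),1)$, which is possible because Theorem~\ref{thm:global-certified-crdrs} gives $\varrho(M)<1$. Gelfand's formula then supplies a constant $C_M$ with $\norm{M^k}_1\le C_M\widehat\rho^k$ for all $k$. Hence
\[
A_k+E_k=\norm{w_k}_1\le C_M\widehat\rho^k\norm{w_0}_1 .
\]
Take $C:=\max\{C_M\norm{w_0}_1,\varepsilon_0\}>0$ for some small $\varepsilon_0>0$, so that $C$ is positive even when $w_0=0$.

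A sharper constant is available. Because $M$ is a positive $2\times2$ matrix (note $c>0$ and $\theta L_p(1+\rho)>0$), Perron--Frobenius gives a positive left eigenvector $v$ with $v^\top M=\varrho(M)v^\top$. Then
\[
v^\top w_{k+1}\le v^\top Mw_k=\varrho(M)\,v^\top w_k ,
\]
so $v^\top w_k\le\varrho(M)^k\,v^\top w_0$. Comparing the weighted norm with $\norm{\cdot}_1$ yields the bound even with $\widehat\rho=\varrho(M)$. The statement only needs some $\widehat\rho$, so either route suffices.

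\emph{Gradient count.} Let $N:=\lceil\log(C/\varepsilon)/\log(1/\widehat\rho)\rceil$. Then $C\widehat\rho^N\le\varepsilon$, so after $N$ outer iterations we have $A_N+E_N\le\varepsilon$.

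Now count the gradients used in one pass of Algorithm~\ref{alg:certified-crdrs}.
\begin{itemize}
\item The transported residual $\bar r_k$ is built from stored data only, by \eqref{eq:residual-shift-simple}.
\item The rollout \eqref{eq:model-rollout-initial}--\eqref{eq:predictor-endpoint} evaluates no gradients.
\item One new value and gradient are computed at $\widehat x_k$.
\item If the candidate is rejected, the fallback \eqref{eq:safe-gradient-step} uses exactly $b$ more, one at each endpoint $u_1,\dots,u_b$; the residual at $u_0=a_{k-1}$ is already known from transport.
\item The secant pair reuses the stored gradients at $a_{k-1}$ and $x_k$, and the anchor update $a_k=x_k$ keeps the gradient already computed.
\end{itemize}
So each outer iteration costs at most $b+1$ new gradients, and exactly $1$ when the quasi-Newton candidate is accepted.

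Multiplying the per-iteration cost by $N$ gives \eqref{eq:fixed-gradient-count}. The $O(1)$ term absorbs any initialization, for example computing $\grad f(a_{-1})$ if it was not supplied, and the possibility of an outer iteration's evaluations when indexing starts at $k=0$. If every candidate is accepted, the per-iteration cost is $1$ and the factor $b+1$ becomes $1$.

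The only real subtlety is the spectral-radius step, namely turning $\varrho(M)<1$ into a uniform geometric bound with $\widehat\rho$ strictly greater than $\varrho(M)$; Gelfand's formula handles it directly. The gradient bookkeeping is routine. One should still confirm that the Armijo test \eqref{eq:armijo-certificate} needs no evaluation beyond the stored $f(a_{k-1})$ and the single evaluation at $\widehat x_k$, and the text after \eqref{eq:descent-direction-certificate} states exactly this.
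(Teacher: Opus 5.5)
Your proposal is correct and follows the route the paper takes implicitly: iterate the nonnegative recursion \eqref{eq:global-matrix-recursion}, turn $\varrho(M)<1$ into a geometric bound $\|M^k\|\le C_M\widehat\rho^k$, and multiply the outer-iteration count by the per-iteration cost recorded after \eqref{eq:safe-gradient-step}. That cost is at most $b+1$ new gradients, or exactly one when the candidate is accepted, and the extra inner call needed to form $x_N$ is absorbed in the $O(1)$ term. Your optional Perron--Frobenius refinement also works, but $M$ is only guaranteed to be irreducible, not entrywise positive: its $(1,1)$ entry $\rho$ vanishes when $\alpha=1$ and $\gamma\sigma=\gamma\beta=1$, and irreducibility still gives a positive left eigenvector.
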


\section{Local superlinear behavior of transported quasi-Newton steps}
\label{sec:superlinear}

The global theorem uses only the acceptance certificate.  We now use the fact that the candidate is quasi-Newton.  The main theorem again comes first.

\begin{theorem}[Transported Dennis--Mor\'e theorem]
\label{thm:transported-dm-focused}
Let $z_\star$ be the exact DRS fixed point from Theorem~\ref{thm:global-certified-crdrs} and set $p_\star:=P_{\gamma f}z_\star$.  For each $k$, let
\[
    p_k=P_{\gamma f}z_k,
    \qquad
    d_k:=a_{k-1}-p_k,
    \qquad
    \overline G_k:=\int_0^1
      \grad^2\phi_k(p_k+td_k)\,dt.
\]
Assume $d_k\to0$, $d_k\ne0$, $a_{k-1}\to p_\star$, $p_k\to p_\star$, and $\grad^2 f$ is continuous near $p_\star$.  Write the actual transported candidate as
\begin{equation}
\label{eq:actual-transported-focused}
    \widehat x_k
    =a_{k-1}-\eta_k^{\rm eff}H_k r_k(a_{k-1})+\xi_k.
\end{equation}
Assume the direct and inverse models are paired as in Proposition~\ref{prop:paired-update-property}, so $B_k=H_k^{-1}$ in exact arithmetic.  Here $\eta_k^{\rm eff}$ is the effective damping from \eqref{eq:frozen-endpoint-new} (or its value after any explicit cap), and $\xi_k$ is the remaining displacement change caused by clipping or truncation.  Suppose
\begin{align}
    \sup_k\norm{H_k}&<\infty,
    &\eta_k^{\rm eff}&\to1,
    \label{eq:dm-basic-focused}\\
    \frac{\norm{(B_k-\overline G_k)d_k}}{\norm{d_k}}&\to0,
    &
    \frac{\norm{\xi_k}}{\norm{d_k}}&\to0.
    \label{eq:dm-directional-focused}
\end{align}
Then
\begin{equation}
\label{eq:transport-superlinear-focused}
    \frac{\norm{\widehat x_k-p_k}}
         {\norm{a_{k-1}-p_k}}
    \longrightarrow0.
\end{equation}
Moreover, for every fixed $\theta_Q\in(0,1)$ and $c_A\in(0,1/2)$, the descent-direction test \eqref{eq:descent-direction-certificate}, the residual test \eqref{eq:residual-certificate} and Armijo test \eqref{eq:armijo-certificate} hold for all sufficiently large $k$.  Hence Algorithm~\ref{alg:certified-crdrs} eventually accepts the transported candidate with one new gradient and does not call the fallback block.
\end{theorem}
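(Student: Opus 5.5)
The plan is to write the candidate error in terms of $d_k$ and show that each term is $o(\norm{d_k})$, then check the three certificates from this asymptotic. Since $r_k(p_k)=0$, the integral mean value theorem gives the exact identity $r_k(a_{k-1})=\overline G_k d_k$. Subtracting $p_k$ from \eqref{eq:actual-transported-focused} and using $H_kB_k=I$ (Proposition~\ref{prop:paired-update-property}) gives
\[
  \widehat x_k-p_k
  = d_k-\eta_k^{\rm eff}H_k\overline G_k d_k+\xi_k
  = (1-\eta_k^{\rm eff})d_k+\eta_k^{\rm eff}H_k(B_k-\overline G_k)d_k+\xi_k .
\]
Dividing by $\norm{d_k}$, the first term vanishes because $\eta_k^{\rm eff}\to1$. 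The second vanishes because $\sup_k\norm{H_k}<\infty$ and the directional Dennis--Mor\'e hypothesis in \eqref{eq:dm-directional-focused} holds. The third vanishes by the assumption on $\xi_k$. This yields \eqref{eq:transport-superlinear-focused}, and it is the routine part of the proof.

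For the certificates, set $G_\star:=\grad^2\phi_\star(p_\star)=\gamma\grad^2 f(p_\star)+I$. Continuity of $\grad^2 f$ near $p_\star$, together with $a_{k-1},p_k\to p_\star$, gives $\overline G_k\to G_\star$. All these operators satisfy $mI\preceq\cdot\preceq LI$. The residual test follows from
\[
  \norm{r_k(\widehat x_k)}\le L\norm{\widehat x_k-p_k}=o(\norm{d_k})
\]
and
\[
  \norm{\bar r_k(a_{k-1})}=\norm{\overline G_kd_k}\ge m\norm{d_k},
\]
so the ratio tends to $0<\frac mL\theta_Q$. For the descent-direction test, write $\widehat x_k-a_{k-1}=-d_k+o(\norm{d_k})$. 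Then
\[
  \ip{\bar r_k(a_{k-1})}{\widehat x_k-a_{k-1}}=-\ip{\overline G_kd_k}{d_k}+o(\norm{d_k}^2)\le-m\norm{d_k}^2+o(\norm{d_k}^2)<0
\]
for large $k$.

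The Armijo test is the main obstacle, since it needs second-order accuracy of $\phi_k$ along a step that is asymptotically the exact Newton step. I will use the integral Taylor expansions about $p_k$:
\[
  \phi_k(a_{k-1})-\phi_k(p_k)=\tfrac12\ip{d_k}{\widetilde G_kd_k},
  \qquad
  \widetilde G_k:=2\int_0^1(1-t)\grad^2\phi_k(p_k+td_k)\,dt\to G_\star .
\]
Similarly, $\phi_k(\widehat x_k)-\phi_k(p_k)\le\frac L2\norm{\widehat x_k-p_k}^2=o(\norm{d_k}^2)$. Hence $\phi_k(\widehat x_k)-\phi_k(a_{k-1})=-\frac12\ip{d_k}{G_\star d_k}+o(\norm{d_k}^2)$. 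The right-hand side of \eqref{eq:armijo-certificate} minus $\phi_k(a_{k-1})$ equals $-c_A\ip{d_k}{G_\star d_k}+o(\norm{d_k}^2)$. Because $c_A<\tfrac12$ and $\ip{d_k}{G_\star d_k}\ge m\norm{d_k}^2$, the gap
\[
  (\tfrac12-c_A)\,m\norm{d_k}^2
\]
dominates the $o(\norm{d_k}^2)$ errors, and the Armijo inequality holds for large $k$. Here the uniformity of $\overline G_k,\widetilde G_k\to G_\star$, which comes from continuity of $\grad^2 f$ on a neighbourhood containing the segments $[p_k,a_{k-1}]$, is exactly what is needed. Since all three tests hold eventually, Algorithm~\ref{alg:certified-crdrs} accepts $\widehat x_k$ with one new gradient and never enters the fallback block.
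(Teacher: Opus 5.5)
Your proposal is correct and follows essentially the same route as the paper. The identity $r_k(a_{k-1})=\overline G_kd_k$ together with exact pairing $H_kB_k=I$ gives the same three-term decomposition of $\widehat x_k-p_k$. The three certificates then follow from the same second-order expansions about $p_k$ with $G_\star=I+\gamma\grad^2f(p_\star)$, with Armijo margin $(\tfrac12-c_A)\ip{d_k}{G_\star d_k}$; your integral-remainder operator $\widetilde G_k$ and the bound $\phi_k(\widehat x_k)-\phi_k(p_k)\le\frac L2\norm{\widehat x_k-p_k}^2$ only make explicit the $o(\norm{d_k}^2)$ expansions that the paper obtains from Hessian continuity.
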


\begin{proof}
Since $r_k(p_k)=0$, the fundamental theorem of calculus gives
\begin{equation}
\label{eq:mean-hessian-focused}
    r_k(a_{k-1})=\overline G_kd_k.
\end{equation}
Insert this into \eqref{eq:actual-transported-focused}.  Exact pairing gives $H_kB_k=I$, and therefore
\[
\begin{aligned}
    \widehat x_k-p_k
    ={}&(1-\eta_k^{\rm eff})d_k
      +\eta_k^{\rm eff}H_k(B_k-\overline G_k)d_k+\xi_k.
\end{aligned}
\]
Divide by $\norm{d_k}$ and use \eqref{eq:dm-basic-focused}--\eqref{eq:dm-directional-focused}.  This proves \eqref{eq:transport-superlinear-focused}.

For the residual test, local smoothness and strong convexity give
\[
    \frac{\norm{r_k(\widehat x_k)}}{\norm{r_k(a_{k-1})}}
    \le \frac{L}{m}
       \frac{\norm{\widehat x_k-p_k}}{\norm{d_k}}
    \longrightarrow0.
\]
Thus \eqref{eq:residual-certificate} eventually holds.

Set $\widehat d_k=\widehat x_k-p_k=o(\norm{d_k})$ and
$\widehat s_k=\widehat x_k-a_{k-1}=-d_k+\widehat d_k$.  Continuity of the Hessian gives the local expansions
\[
\begin{aligned}
    \phi_k(a_{k-1})-\phi_k(p_k)
      &=\frac12\ip{d_k}{G_\star d_k}+o(\norm{d_k}^2),\\
    \ip{r_k(a_{k-1})}{\widehat s_k}
      &=-\ip{d_k}{G_\star d_k}+o(\norm{d_k}^2),\\
    \phi_k(\widehat x_k)-\phi_k(p_k)
      &=o(\norm{d_k}^2),
\end{aligned}
\]
where $G_\star=I+\gamma\grad^2f(p_\star)\succ0$.  Therefore
\[
\begin{aligned}
&\phi_k(\widehat x_k)-\phi_k(a_{k-1})
   -c_A\ip{r_k(a_{k-1})}{\widehat s_k}\\
&\qquad
=-(1/2-c_A)\ip{d_k}{G_\star d_k}
  +o(\norm{d_k}^2)<0
\end{aligned}
\]
for all sufficiently large $k$.  This is the Armijo test.  The same expansion gives $\ip{r_k(a_{k-1})}{\widehat s_k}<0$ for all sufficiently large $k$, so \eqref{eq:descent-direction-certificate} also holds.
\end{proof}

The theorem isolates the new moving-target condition: $B_k$ must be accurate in the direction from the old exact-gradient state to the minimizer of the new proximal problem.  A classical secant condition on an unrelated historical direction is not enough.

\subsection{Why the exact proximal path is predictable}

\begin{proposition}[Second-order motion of the proximal point]
\label{prop:prox-path-focused}
Let $p(z)=P_{\gamma f}z$ and let
$G(z)=I+\gamma\grad^2f(p(z))$.  Suppose $G(z)\succeq mI$ and $\grad^2f$ is locally Lipschitz.  Then, for small $\Delta z$,
\begin{equation}
\label{eq:prox-path-focused}
    p(z+\Delta z)
    =p(z)+G(z)^{-1}\Delta z+O(\norm{\Delta z}^2).
\end{equation}
\end{proposition}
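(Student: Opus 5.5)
The plan is to work from the optimality condition of the proximal map. The point $p(z)$ is characterized by
\[
  \Phi(p,z):=\gamma\grad f(p)+p-z=0 .
\]
Set $p=p(z)$, $p^+=p(z+\Delta z)$ and $\Delta p=p^+-p$. Subtracting the two optimality conditions gives
\[
  \gamma\bigl(\grad f(p^+)-\grad f(p)\bigr)+\Delta p=\Delta z .
\]
By the fundamental theorem of calculus this reads $\overline G\,\Delta p=\Delta z$, where
\[
  \overline G:=\int_0^1\bigl(I+\gamma\grad^2 f(p+t\Delta p)\bigr)\,dt .
\]
This is exactly the mean-Hessian device used in \eqref{eq:mean-hessian-focused}.

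The first step is a first-order bound on $\Delta p$. Since $f$ is convex and twice continuously differentiable, each $I+\gamma\grad^2 f\succeq I$, so $\overline G\succeq I$ is invertible. Under the stated hypothesis $G(z)\succeq mI$ one also has the Lipschitz estimate \eqref{eq:strong-prox-Lipschitz}. Either way,
\[
  \norm{\Delta p}\le\norm{\Delta z}
\]
(or $\le L_p\norm{\Delta z}$ with the strongly convex constant). In particular $\Delta p\to0$ as $\Delta z\to0$, so the whole segment $[p,p^+]$ eventually lies in the neighbourhood where $\grad^2 f$ is Lipschitz, with some constant $\Lambda$.

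The second step compares $\overline G$ with $G(z)$. The Lipschitz property gives
\[
  \norm{\overline G-G(z)}\le\gamma\Lambda\int_0^1 t\norm{\Delta p}\,dt=\tfrac12\gamma\Lambda\norm{\Delta p}=O(\norm{\Delta z}).
\]
We then write
\[
  \Delta p-G(z)^{-1}\Delta z=G(z)^{-1}\bigl(G(z)-\overline G\bigr)\Delta p .
\]
This follows from $G(z)\Delta p-\Delta z=(G(z)-\overline G)\Delta p$. Using $\norm{G(z)^{-1}}\le1/m$, the right-hand side is bounded by
\[
  \tfrac1m\cdot\tfrac12\gamma\Lambda\norm{\Delta p}^2\le C\norm{\Delta z}^2 .
\]
This yields \eqref{eq:prox-path-focused}.

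The main point needing care is the localization in the first step. The Lipschitz bound on $\grad^2 f$ is only local, so I must first show that $p^+$ stays near $p$ before invoking it. This is why the global nonexpansiveness (or $L_p$-Lipschitz continuity) of $P_{\gamma f}$ must come before the Taylor comparison; a bare implicit-function-theorem argument would not by itself give the quantitative $O(\norm{\Delta z}^2)$ constant. The rest is routine.
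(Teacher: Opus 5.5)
Your proof is correct and is essentially the paper's argument: expand the optimality equation $\gamma\grad f(p)+p=z$ around $p(z)$, then use global Lipschitz continuity of $P_{\gamma f}$ to make the second-order remainder $O(\norm{\Delta z}^2)$, and finally multiply by $G(z)^{-1}$; your mean-Hessian identity simply makes the paper's Taylor remainder explicit. Relying on nonexpansiveness from convexity is the right choice for the localization step, since the pointwise bound $G(z)\succeq mI$ alone would not give the global $1/m$-Lipschitz estimate you mention in passing.
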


\begin{proof}
The optimality equation is
$\gamma\grad f(p(z))+p(z)=z$.  Taylor expansion at $p(z)$ gives
\[
    \Delta z=G(z)\bigl[p(z+\Delta z)-p(z)\bigr]
       +O\bigl(\norm{p(z+\Delta z)-p(z)}^2\bigr).
\]
The proximal map is $1/m$-Lipschitz, so the remainder is $O(\norm{\Delta z}^2)$.  Multiplication by $G(z)^{-1}$ gives \eqref{eq:prox-path-focused}.
\end{proof}

At an exact old proximal point, the new residual is simply $z_{k-1}-z_k$.  Thus a full inverse-Hessian transport step is the first-order predictor in \eqref{eq:prox-path-focused}; exact inverse curvature gives a second-order tracking error.

\subsection{BFGS and SR1 specialization}

\begin{corollary}[Full-memory BFGS or SR1]
\label{cor:bfgs-sr1-superlinear}
Assume the hypotheses of Theorem~\ref{thm:transported-dm-focused}.  Suppose the BFGS or safeguarded SR1 realization has entered a local regime in which
\begin{enumerate}[label=(\alph*),leftmargin=2.2em]
    \item the direct and inverse actions remain paired;
    \item the effective damping becomes full, so $\eta_k^{\rm eff}\to1$ and $\xi_k=o(\norm{d_k})$;
    \item the update satisfies the transported Dennis--Mor\'e condition
    \[
       \frac{\norm{(B_k-\overline G_k)d_k}}{\norm{d_k}}\to0.
    \]
\end{enumerate}
Then the accepted CR-DRS proximal candidate is $Q$-superlinear relative to the ordinary warm start and uses one new gradient for all sufficiently large $k$.

A sufficient stronger condition is $\norm{B_k-G_\star}\to0$, where
$G_\star=I+\gamma\grad^2f(p_\star)$.  Classical full-memory BFGS and SR1 theory supplies Dennis--Mor\'e conditions under the usual local smoothness, nonsingularity, and step-acceptance assumptions \cite{BroydenDennisMore1973,dennismore1974,DennisMore1977,NocedalWright2006}.  Generic fixed-memory L-BFGS does not automatically have this property; for that realization the displayed transported condition is an explicit additional hypothesis.
\end{corollary}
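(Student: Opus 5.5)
The plan is to reduce Corollary~\ref{cor:bfgs-sr1-superlinear} directly to Theorem~\ref{thm:transported-dm-focused}. Hypotheses (a)--(c) are essentially the remaining assumptions \eqref{eq:dm-basic-focused}--\eqref{eq:dm-directional-focused}; the only missing one is $\sup_k\norm{H_k}<\infty$.

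For the uniform bound on $H_k$, the safeguarded SR1 case is immediate from \eqref{eq:sr1-spectral-bounds}, which gives $\norm{H_k}\le\overline h$. For BFGS I will not try to bound the replayed product of updates. Instead I will use (a) and (c) through the stronger local statement. Under the sufficient condition $\norm{B_k-G_\star}\to0$, the facts $G_\star\succeq I$ and $B_k\succ0$ give $\lambda_{\min}(B_k)\ge 1/2$ for large $k$. Hence $\norm{H_k}=\norm{B_k^{-1}}\le 2$ eventually, and the finitely many early indices contribute a finite maximum.

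In the general case, where only the directional condition (c) is assumed, this is the step I expect to be the main obstacle. I would handle it by reading ``entered a local regime'' as including bounded conditioning of the paired models, which is standard in the cited Dennis--Mor\'e theory. I would state this explicitly in the proof as part of the regime assumption.

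Once $\sup_k\norm{H_k}<\infty$ is in hand, Theorem~\ref{thm:transported-dm-focused} gives \eqref{eq:transport-superlinear-focused}. This is precisely $Q$-superlinearity relative to the ordinary warm start $a_{k-1}$. The same theorem gives eventual acceptance by \eqref{eq:descent-direction-certificate}--\eqref{eq:residual-certificate}. With the fallback block never called, each outer iteration then costs exactly the single evaluation at $\widehat x_k$, as stated after \eqref{eq:safe-gradient-step}.

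For the sufficiency claim, I will show $\norm{B_k-G_\star}\to0$ implies (c). Write $B_k-\overline G_k=(B_k-G_\star)+(G_\star-\overline G_k)$. Then bound
\[
  \norm{G_\star-\overline G_k}\le\gamma\sup_{t\in[0,1]}\norm{\grad^2f(p_k+td_k)-\grad^2f(p_\star)}.
\]
This tends to zero by continuity of $\grad^2f$ near $p_\star$, because $p_k\to p_\star$ and $d_k\to0$. Both operator-norm terms vanish, so the directional ratio does as well.

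The closing remarks on classical BFGS/SR1 theory and on L-BFGS are citations and a caveat rather than claims requiring proof. I would note only that a fixed memory $m_q<\infty$ with scalar reset \eqref{eq:initial-qn-scaling} cannot force $B_k\to G_\star$ in general. Hence (c) must be assumed in that case.
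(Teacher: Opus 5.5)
Your reduction to Theorem~\ref{thm:transported-dm-focused}, together with the Hessian-continuity argument showing that $\norm{B_k-G_\star}\to0$ implies (c) (the content of \eqref{eq:dm-equivalent-fixed-Hessian} and Corollary~\ref{cor:dm-full-model-convergence}), is essentially how the paper obtains this corollary; the paper gives no separate proof. The bound $\sup_k\norm{H_k}<\infty$ that you treat as the main obstacle is not actually missing, because it is part of \eqref{eq:dm-basic-focused}, which the corollary imports explicitly by assuming the hypotheses of the theorem, so you need no reinterpretation of ``local regime'' (your SR1 and full-model BFGS bounds are nonetheless correct).
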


\section{Justifying the transported Dennis--Mor\'e condition}
\label{sec:dm-coverage}

The transported Dennis--Mor\'e condition is the main local assumption that is specific to CR-DRS.  Classical quasi-Newton theory learns curvature from steps generated by one fixed objective.  Here the direction
\[
    d_k=a_{k-1}-p_k,
    \qquad p_k=P_{\gamma f}z_k,
\]
also contains the motion of the exact proximal point caused by the outer DRS update.  The affine change of right-hand side does not alter any exact secant, but it determines the direction in which the stored Hessian model is queried.

The direct and inverse models used in this paper are paired.  Proposition~\ref{prop:paired-update-property} gives $B_k=H_k^{-1}$ in exact arithmetic for both BFGS and accepted SR1 updates.  Thus $H_kB_k=I$ is built into Theorem~\ref{thm:transported-dm-focused}; no separate inverse-consistency limit is required.  The substantive condition is
\begin{equation}
\label{eq:dm-real-obstacle}
    \frac{\norm{(B_k-\overline G_k)d_k}}{\norm{d_k}}\longrightarrow0.
\end{equation}

Set
\begin{equation}
\label{eq:dm-coverage-Gstar}
    G_\star:=I+\gamma\grad^2f(p_\star),
    \qquad
    u_k:=\frac{d_k}{\norm{d_k}}
    \quad(d_k\ne0).
\end{equation}
Under the hypotheses of Theorem~\ref{thm:transported-dm-focused}, continuity of $\grad^2f$ gives $\norm{\overline G_k-G_\star}\to0$.  Hence
\begin{equation}
\label{eq:dm-equivalent-fixed-Hessian}
    \norm{(B_k-\overline G_k)u_k}\to0
    \quad\Longleftrightarrow\quad
    \norm{(B_k-G_\star)u_k}\to0.
\end{equation}
The moving-target question is therefore a directional approximation problem for one limiting Hessian $G_\star$.

\subsection{How the outer step enters the transported direction}

Let
\[
    e_{k-1}:=a_{k-1}-p_{k-1},
    \qquad
    \Delta z_k:=z_k-z_{k-1},
    \qquad
    G_{k-1}:=I+\gamma\grad^2f(p_{k-1}).
\]
Proposition~\ref{prop:prox-path-focused} gives
\begin{equation}
\label{eq:dm-direction-decomposition}
    d_k
    =e_{k-1}-G_{k-1}^{-1}\Delta z_k
      +O(\norm{\Delta z_k}^2).
\end{equation}
The first term is the previous smooth-proximal error.  The second is the first-order response of the exact proximal point to the new DRS right-hand side.  The right-hand side cancels from the reusable secant identity \eqref{eq:reusable-secant}; nevertheless, it can request curvature in a direction not yet represented in the memory.

\begin{remark}[A DRS-realizable obstruction]
\label{rem:dm-orthogonal-counterexample}
The obstruction is not an artifact of allowing an arbitrary sequence of centers.  Let $\Hcal=\R^2$, let $e_1,e_2$ be the coordinate vectors, and choose
\[
    G=\diag(g_1,g_2),
    \qquad g_1,g_2>1,
    \qquad
    f(x)=\frac{1}{2\gamma}x^\top(G-I)x.
\]
Then
\[
    P_{\gamma f}z=G^{-1}z,
    \qquad
    \phi_z(x)=\frac12x^\top Gx-z^\top x+\frac12\norm z^2.
\]
Fix a DRS state $\bar z$ and write $\bar p=G^{-1}\bar z$.  Choose the nonsmooth term
\[
    g=\iota_{\{\bar y\}},
    \qquad
    \bar y:=\bar p+\frac{t}{2\alpha}e_2,
    \qquad t\ne0.
\]
Its proximal map is the constant map $P_{\gamma g}\equiv\bar y$.  The corresponding DRS fixed point is
\[
    z_\star=G\bar y
      =\bar z+\frac{g_2t}{2\alpha}e_2,
\]
so the construction can be placed in an arbitrarily small neighborhood of a fixed point by taking $|t|$ small.  The valid DRS update from $\bar z$ is
\[
    \Delta z=2\alpha(\bar y-\bar p)=t e_2.
\]
If the old anchor is exact, then $d=-G^{-1}\Delta z=-(t/g_2)e_2$.  Suppose the available secants lie in $\operatorname{span}\{e_1\}$ and
$B=\diag(g_1,b)$ with $b\ne g_2$.  Then
\[
    \frac{\norm{(B-G)d}}{\norm d}=|b-g_2|.
\]
Thus the DRS structure alone does not imply transported Dennis--Mor\'e accuracy from past secants.

This example is deliberately local.  It shows that one DRS step can introduce a direction that has not yet been learned.  It does \emph{not} show that DRS keeps producing mutually orthogonal innovations, or that a full-memory model cannot converge on the increasing span of the observed directions.  The latter question is addressed by the active-subspace conditions below.
\end{remark}

\subsection{Deterministic sufficient conditions}

The strongest sufficient condition is immediate.

\begin{corollary}[Full-model convergence]
\label{cor:dm-full-model-convergence}
If $\norm{B_k-G_\star}\to0$, then the transported Dennis--Mor\'e condition holds for every sequence of nonzero directions $d_k\to0$, independently of how the outer DRS step chooses those directions.
\end{corollary}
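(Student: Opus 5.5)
The plan is a direct norm estimate that routes through the equivalence \eqref{eq:dm-equivalent-fixed-Hessian}. Let $u_k=d_k/\norm{d_k}$, so $\norm{u_k}=1$. The triangle inequality and the definition of the operator norm give
\[
    \frac{\norm{(B_k-\overline G_k)d_k}}{\norm{d_k}}
    =\norm{(B_k-\overline G_k)u_k}
    \le \norm{B_k-G_\star}+\norm{\overline G_k-G_\star}.
\]
The first term tends to zero by hypothesis. Neither term involves the direction $u_k$ except through a unit-norm factor. This is exactly why the conclusion holds \emph{independently} of how the outer DRS step chooses the directions $d_k$.

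The only remaining work is the second term, $\norm{\overline G_k-G_\star}\to0$. Since $\grad^2\phi_k=I+\gamma\grad^2 f$ does not depend on the center $z_k$, by \eqref{eq:intro-curvature-invariance}, we have
\[
    \overline G_k-G_\star=\gamma\int_0^1\bigl(\grad^2 f(p_k+td_k)-\grad^2 f(p_\star)\bigr)\,dt .
\]
Every point $p_k+td_k$ lies on the segment joining $p_k$ and $a_{k-1}$. Both endpoints converge to $p_\star$, as in the standing hypotheses of Theorem~\ref{thm:transported-dm-focused}. Hence, for any neighborhood of $p_\star$ on which $\grad^2 f$ is continuous, all segment points eventually lie in that neighborhood, uniformly in $t$. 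Continuity then makes the integrand uniformly small, so the integral tends to zero.

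The one point needing care is the scope of the statement. As written it mentions only $d_k\to0$, not $p_k\to p_\star$. I will therefore state explicitly that the corollary is read in the setting of \eqref{eq:dm-real-obstacle}, that is, under the hypotheses of Theorem~\ref{thm:transported-dm-focused}, which supply $p_k\to p_\star$ and the continuity of $\grad^2 f$ near $p_\star$. Under those hypotheses the convergence of $\overline G_k$ is immediate, and I expect no genuine obstacle. The argument is a two-term triangle inequality followed by a uniform-continuity remark.
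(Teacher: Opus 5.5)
Your proposal is correct and matches the paper. The paper treats the corollary as immediate from \eqref{eq:dm-equivalent-fixed-Hessian}, which gives $\norm{(B_k-G_\star)u_k}\le\norm{B_k-G_\star}\to0$, while $\norm{\overline G_k-G_\star}\to0$ comes from continuity of $\grad^2 f$ under the hypotheses of Theorem~\ref{thm:transported-dm-focused}; your two-term triangle inequality just unpacks that equivalence. Your scope remark is also right: the corollary is implicitly read under those standing hypotheses, because without $p_k\to p_\star$ the averaged Hessian $\overline G_k$ need not approach $G_\star$.
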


Full operator convergence is stronger than necessary.  It is enough to learn the subspace visited by the transported directions.

\begin{proposition}[Active-subspace coverage]
\label{prop:dm-active-subspace}
Let $\mathcal U_k\subseteq\Hcal$ be linear subspaces and let $\Pi_k$ be their orthogonal projectors.  Suppose
\begin{align}
    \sup_k\norm{B_k-G_\star}&<\infty,
    \label{eq:dm-subspace-bounded}\\
    \frac{\norm{(I-\Pi_k)d_k}}{\norm{d_k}}&\to0,
    \label{eq:dm-subspace-direction}\\
    \norm{(B_k-G_\star)\Pi_k}&\to0.
    \label{eq:dm-subspace-model}
\end{align}
Then \eqref{eq:dm-real-obstacle} holds.
\end{proposition}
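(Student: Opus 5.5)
By \eqref{eq:dm-equivalent-fixed-Hessian}, which holds under the standing hypotheses of Theorem~\ref{thm:transported-dm-focused}, it suffices to prove $\norm{(B_k-G_\star)u_k}\to0$ with $u_k=d_k/\norm{d_k}$. To do this I will split the unit direction along the active subspace as $u_k=\Pi_ku_k+(I-\Pi_k)u_k$, and then use the triangle inequality.

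This gives
\[
  \norm{(B_k-G_\star)u_k}
  \le \norm{(B_k-G_\star)\Pi_k}\,\norm{u_k}
     +\norm{B_k-G_\star}\,\norm{(I-\Pi_k)u_k}.
\]
The first term is bounded by $\norm{(B_k-G_\star)\Pi_k}$, since $\norm{u_k}=1$, and this tends to zero by \eqref{eq:dm-subspace-model}. Note that we only need $\norm{\Pi_k u_k}\le1$ here.

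For the second term, \eqref{eq:dm-subspace-bounded} provides a uniform bound $C:=\sup_k\norm{B_k-G_\star}<\infty$. By \eqref{eq:dm-subspace-direction}, $\norm{(I-\Pi_k)u_k}=\norm{(I-\Pi_k)d_k}/\norm{d_k}\to0$. Hence the second term is at most $C\,\norm{(I-\Pi_k)d_k}/\norm{d_k}\to0$.

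Putting the two terms together yields $\norm{(B_k-G_\star)u_k}\to0$. The step from this limit back to \eqref{eq:dm-real-obstacle} is where the averaged Hessian enters. Since $\norm{\overline G_k-G_\star}\to0$ (from $d_k\to0$, $p_k\to p_\star$ and continuity of $\grad^2 f$), we have $\norm{(B_k-\overline G_k)u_k}\le\norm{(B_k-G_\star)u_k}+\norm{\overline G_k-G_\star}\to0$, which is \eqref{eq:dm-real-obstacle}. I expect no real obstacle in this argument. The only point requiring care is this last equivalence: it relies on the hypotheses of Theorem~\ref{thm:transported-dm-focused} being in force, and I will state explicitly that they are assumed.
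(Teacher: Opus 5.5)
Your proposal is correct and follows essentially the same route as the paper: split $u_k=\Pi_ku_k+(I-\Pi_k)u_k$, kill the first piece with \eqref{eq:dm-subspace-model} and the second with \eqref{eq:dm-subspace-bounded}--\eqref{eq:dm-subspace-direction}. You then pass from $G_\star$ back to $\overline G_k$ via \eqref{eq:dm-equivalent-fixed-Hessian}, which you correctly note depends on the standing hypotheses of Theorem~\ref{thm:transported-dm-focused}.
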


\begin{proof}
Write
\[
    (B_k-G_\star)u_k
    =(B_k-G_\star)\Pi_ku_k
      +(B_k-G_\star)(I-\Pi_k)u_k.
\]
The first term tends to zero by \eqref{eq:dm-subspace-model}.  The second tends to zero by \eqref{eq:dm-subspace-bounded}--\eqref{eq:dm-subspace-direction}.  Equation~\eqref{eq:dm-equivalent-fixed-Hessian} completes the proof.
\end{proof}

In particular, one may take a nested chain $\mathcal U_1\subseteq\mathcal U_2\subseteq\cdots$ that contains the observed directions.  If the restricted error $\norm{(B_k-G_\star)\Pi_k}$ tends to zero, then widening of the span is harmless.  Remark~\ref{rem:dm-orthogonal-counterexample} only shows that this restricted convergence is not automatic before a newly requested direction has entered the learned subspace.

\subsection{When the nonsmooth term makes the forcing nonadversarial}
\label{sec:dm-g-structure}

The outer correction is endogenous: it is generated by the same local DRS map at every iteration.  To use this fact, one must understand when the nonsmooth proximal map has a stable local derivative.  The point at which this derivative is taken is important.  Let
\begin{equation}
\label{eq:dm-prox-graph-point}
    p_\star=P_{\gamma f}z_\star,
    \qquad
    w_\star:=R_{\gamma f}z_\star=2p_\star-z_\star.
\end{equation}
The smooth proximal optimality condition gives
\(
 z_\star-p_\star=\gamma\grad f(p_\star)
\), and therefore
\begin{equation}
\label{eq:dm-wstar-primal-dual}
    w_\star=p_\star-\gamma\grad f(p_\star),
    \qquad
    v_\star:=\frac{w_\star-p_\star}{\gamma}
              =-\grad f(p_\star)
              \in\subd g(p_\star).
\end{equation}
Thus differentiability of \(P_{\gamma g}\) at \(w_\star\) is a property of the primal--dual graph point
\(
 (p_\star,v_\star)\in\operatorname{gra}\subd g
\), not simply a smoothness property of \(g\) at \(p_\star\).  Indeed, \(g\) can be nonsmooth at \(p_\star\) while its proximal map is affine in a neighborhood of \(w_\star\).  Equivalently, since the Moreau envelope
\[
    e_{\gamma g}(w)
      :=\min_x\left\{g(x)+\frac{1}{2\gamma}\norm{x-w}^2\right\}
\]
satisfies
\begin{equation}
\label{eq:dm-moreau-gradient}
    \grad e_{\gamma g}(w)
       =\frac{1}{\gamma}\bigl(w-P_{\gamma g}w\bigr),
\end{equation}
Fr\'echet differentiability of \(P_{\gamma g}\) concerns second-order smoothness of the Moreau envelope at \(w_\star\), rather than differentiability of \(g\) at \(p_\star\); see \cite[Chs.~12 and~24]{BauschkeCombettes2017}.

The useful condition is a nondegeneracy condition at the graph point.  For the regularizers below it takes the strict-complementarity form
\begin{equation}
\label{eq:dm-ri-nondegeneracy}
    -\grad f(p_\star)=v_\star
       \in\operatorname{ri}\subd g(p_\star),
\end{equation}
where \(\operatorname{ri}\) denotes relative interior.  In partial-smoothness terminology, \eqref{eq:dm-ri-nondegeneracy} places the dual vector in the relative interior of the subdifferential face associated with the active manifold.  It is the standard condition that makes the active manifold stable under small perturbations \cite{Lewis2002,LiangFadiliPeyre2017}.  More general characterizations of continuous differentiability of proximal mappings through relative-interior conditions are developed in \cite{HangSarabi2025}.

\begin{proposition}[Proximal derivative after active-manifold identification]
\label{prop:dm-active-prox-derivative}
Suppose that \(g\) is partly smooth at \(p_\star\) relative to a locally affine manifold
\(
 \mathcal M=p_\star+\mathcal T_\star
\), that \(g|_{\mathcal M}\) is twice continuously differentiable, and that \eqref{eq:dm-ri-nondegeneracy} holds.  Let
\(
 U:\R^r\to\Hcal
\)
be an isometry onto the tangent space \(\mathcal T_\star\), so that
\(
 U^*U=I_r
\)
and
\(
 UU^*=P_{\mathcal T_\star}
\).  Define the restricted Hessian
\begin{equation}
\label{eq:dm-active-hessian-g}
    \mathcal H_{g,\star}
      :=U^*\grad^2\!\bigl(g|_{\mathcal M}\bigr)(p_\star)U.
\end{equation}
Then \(P_{\gamma g}\) is continuously Fr\'echet differentiable in a neighborhood of \(w_\star\), and
\begin{equation}
\label{eq:dm-active-prox-general-derivative}
    Q_\star:=DP_{\gamma g}(w_\star)
       =U\bigl(I_r+\gamma\mathcal H_{g,\star}\bigr)^{-1}U^*.
\end{equation}
In particular, if \(g\) is locally affine on \(\mathcal M\), then
\begin{equation}
\label{eq:dm-active-prox-projector}
    Q_\star=P_{\mathcal T_\star}.
\end{equation}
\end{proposition}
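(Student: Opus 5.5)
We plan to prove the statement by identifying the proximal map locally with the proximal map of a smooth function restricted to the affine manifold $\mathcal M$. Then we differentiate the resulting finite-dimensional optimality equation by the implicit function theorem.

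The first step uses the nondegeneracy condition \eqref{eq:dm-ri-nondegeneracy} together with partial smoothness to get local manifold identification. For $w$ near $w_\star$, the point $x(w):=P_{\gamma g}w$ is near $p_\star$, because $P_{\gamma g}$ is nonexpansive (Appendix~\ref{app:proximal-facts}). The dual vector $(w-x(w))/\gamma\in\subd g(x(w))$ is near $v_\star$. By the standard identification result for partly smooth functions under the relative-interior condition \cite{Lewis2002,LiangFadiliPeyre2017}, this forces $x(w)\in\mathcal M$ for all $w$ in a neighborhood $W$ of $w_\star$. We would cite this result rather than reprove it, and this is the step we expect to be the main obstacle. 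The delicate point is that identification needs convergence of the graph points $(x(w),(w-x(w))/\gamma)\to(p_\star,v_\star)$ together with continuity of $g$ along the iterates. Continuity of $g$ follows from $g(x(w))=e_{\gamma g}(w)-\norm{w-x(w)}^2/(2\gamma)$ and continuity of the Moreau envelope.

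The second step builds a smooth parametrization of the minimizer on $\mathcal M$. Write $x=p_\star+U\zeta$ with $\zeta\in\R^r$ and set $h(\zeta):=g(p_\star+U\zeta)$, which is $C^2$ near $0$. On $\mathcal M$, minimizing $g(x)+\norm{x-w}^2/(2\gamma)$ amounts to minimizing $h(\zeta)+\norm{p_\star+U\zeta-w}^2/(2\gamma)$, since $U$ is an isometry. The first-order condition for this reduced problem is
\[
  \Phi(\zeta,w):=\gamma\grad h(\zeta)+\zeta-U^*(w-p_\star)=0.
\]
At $(0,w_\star)$ this holds, because $U^*(w_\star-p_\star)=\gamma U^*v_\star$ and, by partial smoothness, the tangential component of any subgradient equals the Riemannian gradient of $g|_{\mathcal M}$. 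The partial Jacobian is $D_\zeta\Phi(0,w_\star)=I_r+\gamma\mathcal H_{g,\star}$. We claim this matrix is invertible: $\mathcal H_{g,\star}\succeq0$ because convexity of $g$ makes $h$ convex, and $\mathcal M$ is affine, so the restricted Hessian is just $\grad^2h(0)$. The implicit function theorem then gives a $C^1$ map $\zeta(w)$ with $D\zeta(w_\star)=(I_r+\gamma\mathcal H_{g,\star})^{-1}U^*$.

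The third step glues the two. By the first step, $x(w)\in\mathcal M$, so $x(w)=p_\star+U\zeta'$ for some small $\zeta'$ with $\Phi(\zeta',w)=0$. The tangential projection of the optimality inclusion gives this, again because subgradients on $\mathcal M$ have tangential part $\grad h$. By the uniqueness part of the implicit function theorem, $\zeta'=\zeta(w)$ after shrinking $W$. Hence $P_{\gamma g}w=p_\star+U\zeta(w)$ on $W$, which is $C^1$ with derivative $Q_\star=U(I_r+\gamma\mathcal H_{g,\star})^{-1}U^*$, as in \eqref{eq:dm-active-prox-general-derivative}. If $g$ is locally affine on $\mathcal M$, then $\mathcal H_{g,\star}=0$, so $Q_\star=UU^*=P_{\mathcal T_\star}$, which is \eqref{eq:dm-active-prox-projector}.
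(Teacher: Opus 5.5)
Your proposal is correct and follows essentially the same route as the paper. Both arguments cite local identification under partial smoothness and the relative-interior condition, parametrize \(\mathcal M\) by \(x=p_\star+U\zeta\), and apply the implicit function theorem to the tangentially projected optimality equation with Jacobian \(I_r+\gamma\mathcal H_{g,\star}\succ0\) to read off \(Q_\star\). Your additions are steps the paper leaves implicit: function-value convergence via the Moreau envelope, the fact that convexity forces the tangential part of any subgradient to equal \(\grad h\), and the IFT-uniqueness gluing of \(P_{\gamma g}w\) to \(p_\star+U\zeta(w)\).
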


\begin{proof}
Under partial smoothness and \eqref{eq:dm-ri-nondegeneracy}, proximal points generated by inputs near \(w_\star\) remain on \(\mathcal M\); this is the local identification and sensitivity statement in \cite{Lewis2002,HangSarabi2025}.  Write a nearby point on the affine manifold as
\(
 x=p_\star+Ut
\).
The proximal optimality equation, projected onto \(\mathcal T_\star\), is
\[
   U^*\bigl(x-w+\gamma\grad(g|_{\mathcal M})(x)\bigr)=0.
\]
Its derivative with respect to \(t\) at \((p_\star,w_\star)\) is
\(
 I_r+\gamma\mathcal H_{g,\star}
\), which is positive definite because \(g|_{\mathcal M}\) is convex.  The implicit-function theorem therefore gives a continuously differentiable local solution map.  Differentiating the displayed equation with respect to \(w\) gives \eqref{eq:dm-active-prox-general-derivative}.  If the restriction is affine, then \(\mathcal H_{g,\star}=0\), which gives \eqref{eq:dm-active-prox-projector}.
\end{proof}

\subsubsection{The \texorpdfstring{$\ell_1$}{l1} norm}

Let
\begin{equation}
\label{eq:dm-l1-g}
    g(x)=\lambda\norm{x}_1,
    \qquad \lambda>0.
\end{equation}
The proximal map is componentwise soft thresholding,
\begin{equation}
\label{eq:dm-l1-soft-threshold}
   \bigl(P_{\gamma g}w\bigr)_i
      =S_{\gamma\lambda}(w_i)
      :=\operatorname{sign}(w_i)
         \max\{|w_i|-\gamma\lambda,0\}.
\end{equation}
The scalar map \(S_{\gamma\lambda}\) is affine on the three open intervals separated by
\(
 \pm\gamma\lambda
\), and it is not differentiable at these two threshold values.  Hence
\begin{equation}
\label{eq:dm-l1-prox-diff-raw}
   P_{\gamma g}\text{ is Fr\'echet differentiable at }w_\star
   \quad\Longleftrightarrow\quad
   |w_{\star,i}|\ne\gamma\lambda
   \quad\text{for every }i.
\end{equation}

Let
\begin{equation}
\label{eq:dm-l1-active-sets}
   A:=\{i:p_{\star,i}\ne0\},
   \qquad
   Z:=\{i:p_{\star,i}=0\}.
\end{equation}
If \(i\in A\), stationarity gives
\(
 \grad_i f(p_\star)=-\lambda\operatorname{sign}(p_{\star,i})
\), and therefore
\[
   w_{\star,i}
     =p_{\star,i}+\gamma\lambda\operatorname{sign}(p_{\star,i}),
     \qquad
   |w_{\star,i}|>\gamma\lambda.
\]
Thus differentiability is automatic on the nonzero support.  If \(i\in Z\), then
\(
 w_{\star,i}=-\gamma\grad_i f(p_\star)
\), while the KKT condition gives
\(
 |\grad_i f(p_\star)|\le\lambda
\).  Avoiding the threshold requires the strict inequality
\begin{equation}
\label{eq:dm-l1-strict-complementarity}
   |\grad_i f(p_\star)|<\lambda
   \qquad(i\in Z).
\end{equation}
Consequently,
\begin{equation}
\label{eq:dm-l1-equivalence}
   \begin{aligned}
   P_{\gamma g}\text{ is Fr\'echet differentiable at }w_\star
   &\quad\Longleftrightarrow\quad
   -\grad f(p_\star)\in\operatorname{ri}\subd g(p_\star)\\
   &\quad\Longleftrightarrow\quad
   \eqref{eq:dm-l1-strict-complementarity}.
   \end{aligned}
\end{equation}
The identified tangent space is
\begin{equation}
\label{eq:dm-l1-tangent}
   \mathcal T_\star
     :=\{h\in\Hcal:h_i=0\text{ for every }i\in Z\},
\end{equation}
and the derivative is the coordinate projector
\begin{equation}
\label{eq:dm-l1-Q}
   Q_\star
     =\diag\bigl(\mathbf 1_{\{p_{\star,i}\ne0\}}\bigr)
     =P_{\mathcal T_\star}.
\end{equation}

The graph-point distinction is already visible in one dimension.  Let
\(
 g(x)=\lambda|x|
\)
and \(p_\star=0\).  The function \(g\) is not differentiable at \(p_\star\).  If \(w_\star=0\), however, soft thresholding is identically zero near \(w_\star\), so
\(
 DP_{\gamma g}(0)=0
\).  If instead \(w_\star=\gamma\lambda\), the same primal point lies at the boundary of two proximal cells and the proximal map is not differentiable.  The difference is the selected dual element
\(
 v_\star=(w_\star-p_\star)/\gamma\in\subd g(p_\star)
\), not the primal point alone.

\subsubsection{Anisotropic discrete total variation}

Let \(D:\Hcal\to\R^m\) be a finite-difference operator and let
\begin{equation}
\label{eq:dm-aniso-tv-g}
    g(x)=\lambda\norm{Dx}_1.
\end{equation}
Define
\begin{equation}
\label{eq:dm-aniso-tv-active-sets}
   A:=\{j:(Dp_\star)_j\ne0\},
   \qquad
   Z:=\{j:(Dp_\star)_j=0\}.
\end{equation}
The subdifferential can be represented as
\begin{equation}
\label{eq:dm-aniso-tv-subdiff}
   \subd g(p_\star)
   =\left\{\lambda D^*q:
       q_A=\operatorname{sign}(D_Ap_\star),
       \ \norm{q_Z}_\infty\le1\right\}.
\end{equation}
Thus stationarity is equivalent to the existence of a dual certificate \(q_\star\) such that
\begin{equation}
\label{eq:dm-aniso-tv-certificate}
   -\grad f(p_\star)=\lambda D^*q_\star,
   \qquad
   q_{\star,A}=\operatorname{sign}(D_Ap_\star),
   \qquad
   \norm{q_{\star,Z}}_\infty\le1.
\end{equation}
The strict-complementarity condition is the existence of such a certificate with
\begin{equation}
\label{eq:dm-aniso-tv-strict-complementarity}
   \norm{q_{\star,Z}}_\infty<1.
\end{equation}
Equivalently, \(-\grad f(p_\star)\in\operatorname{ri}\subd g(p_\star)\).  Under this condition, small perturbations preserve the zero-difference set \(Z\) and the signs of the active differences.  The identified tangent space is
\begin{equation}
\label{eq:dm-aniso-tv-tangent}
   \mathcal T_\star=\ker D_Z.
\end{equation}
On the identified manifold, the TV term is locally affine:
\begin{equation}
\label{eq:dm-aniso-tv-local-affine}
   g(x)
     =\lambda\ip{\operatorname{sign}(D_Ap_\star)}{D_Ax}
     \quad\text{for }x\text{ near }p_\star
     \text{ with }D_Zx=0.
\end{equation}
Therefore the local proximal problem is a linearly perturbed projection onto \(\ker D_Z\), and
\begin{equation}
\label{eq:dm-aniso-tv-local-prox}
   P_{\gamma g}(w)
      =P_{\mathcal T_\star}
        \bigl(w-\gamma\lambda D_A^*\operatorname{sign}(D_Ap_\star)\bigr)
      \quad\text{locally},
\end{equation}
so that
\begin{equation}
\label{eq:dm-aniso-tv-Q}
   Q_\star=P_{\ker D_Z}.
\end{equation}

A two-pixel example makes the threshold geometry explicit.  Let
\(
 g(x)=\lambda|x_2-x_1|
\)
and put \(\delta=w_2-w_1\).  The proximal solution is fused when
\(
 |\delta|<2\gamma\lambda
\), in which case
\[
   P_{\gamma g}(w)
      =\left(\frac{w_1+w_2}{2},
              \frac{w_1+w_2}{2}\right),
   \qquad
   DP_{\gamma g}(w)
      =\frac12
       \begin{pmatrix}1&1\\[1mm]1&1\end{pmatrix}.
\]
The derivative is the projector onto the constant-vector subspace.  At
\(
 |\delta|=2\gamma\lambda
\), the fused and nonfused cells meet and the proximal map is not Fr\'echet differentiable.

\subsubsection{Isotropic or group total variation}

For block-valued finite differences, let
\begin{equation}
\label{eq:dm-iso-tv-g}
   g(x)=\lambda\sum_{r=1}^m\norm{(Dx)_r}_2.
\end{equation}
Define the active and zero blocks by
\begin{equation}
\label{eq:dm-iso-tv-active-sets}
   A:=\{r:(Dp_\star)_r\ne0\},
   \qquad
   Z:=\{r:(Dp_\star)_r=0\}.
\end{equation}
A dual certificate satisfies
\begin{equation}
\label{eq:dm-iso-tv-certificate}
   -\grad f(p_\star)=\lambda D^*q_\star,
   \qquad
   q_{\star,r}=\frac{(Dp_\star)_r}{\norm{(Dp_\star)_r}_2}
      \ (r\in A),
   \qquad
   \norm{q_{\star,r}}_2\le1
      \ (r\in Z).
\end{equation}
The block strict-complementarity condition is
\begin{equation}
\label{eq:dm-iso-tv-strict-complementarity}
   \norm{q_{\star,r}}_2<1
   \qquad\text{for every }r\in Z.
\end{equation}
The tangent space is again
\begin{equation}
\label{eq:dm-iso-tv-tangent}
   \mathcal T_\star=\ker D_Z.
\end{equation}
Unlike anisotropic TV, the restriction of \(g\) to the active manifold is smooth but not affine.  For \(r\in A\), set
\begin{equation}
\label{eq:dm-iso-tv-unit-blocks}
   u_r:=\frac{(Dp_\star)_r}{\norm{(Dp_\star)_r}_2},
   \qquad
   W_r:=\frac{1}{\norm{(Dp_\star)_r}_2}
          \bigl(I-u_ru_r^*\bigr),
\end{equation}
and let \(W_\star\) be the block-diagonal operator with blocks \(W_r\).  The Hessian of the active smooth TV expression is
\begin{equation}
\label{eq:dm-iso-tv-active-hessian}
   H_{\mathrm{TV},\star}
      =\lambda D_A^*W_\star D_A.
\end{equation}
If \(U\) is an isometry onto \(\mathcal T_\star\), Proposition~\ref{prop:dm-active-prox-derivative} gives
\begin{equation}
\label{eq:dm-iso-tv-Q}
   Q_\star
      =U\left[U^*\bigl(I+\gamma H_{\mathrm{TV},\star}\bigr)U\right]^{-1}U^*.
\end{equation}
Thus \(Q_\star\) is a self-adjoint positive contraction on the identified tangent space.  It reduces to the orthogonal projector only in the locally affine, polyhedral case.

\subsubsection{What fails without strict complementarity}

The nondegeneracy condition is not automatic.  For the \(\ell_1\) norm, it fails when a zero coordinate satisfies
\(
 |\grad_i f(p_\star)|=\lambda
\).  For anisotropic TV, it fails when the relevant dual certificate reaches
\(
 |q_{\star,j}|=1
\)
on a zero difference; for isotropic TV, the corresponding boundary is
\(
 \norm{q_{\star,r}}_2=1
\).  At such points, \(w_\star\) lies on the boundary of two or more active proximal cells.  A single Fr\'echet derivative \(Q_\star\) need not exist, so a fixed local linearization of the DRS map is not justified.

For \(\ell_1\) and anisotropic discrete TV, the proximal map remains piecewise affine and directionally differentiable.  One could replace \(Q_\star\) by a Bouligand derivative or a set of generalized Jacobians and obtain a piecewise-linear local DRS model.  That extension is possible but is outside the present analysis.  The current sufficient condition is therefore explicit: the active support or edge pattern must be nondegenerate in the sense of \eqref{eq:dm-ri-nondegeneracy}.

The TV statements above concern finite-dimensional discrete TV.  Continuum TV on \(BV\) or \(L^2\) involves moving jump sets and infinite-dimensional active geometry; Fr\'echet differentiability of its proximal map requires a separate analysis.  In the split TV--ADMM formulation used in Section~\ref{sec:numerics}, the nonsmooth variable is the finite-dimensional difference field itself, so the scalar or block threshold analysis above applies directly to that split proximal block.

\subsubsection{Local DRS linearization}

The preceding cases justify the differentiability assumption needed to expose the local forcing.  More generally, the next proposition only requires the derivative to exist.

\begin{proposition}[Local DRS linearization]
\label{prop:dm-local-drs-jacobian}
Assume \(P_{\gamma g}\) is Fr\'echet differentiable at \(w_\star\), with derivative
\(Q_\star:=DP_{\gamma g}(w_\star)\).  This assumption is satisfied under the hypotheses of Proposition~\ref{prop:dm-active-prox-derivative}, and in particular under the strict-complementarity conditions \eqref{eq:dm-l1-strict-complementarity}, \eqref{eq:dm-aniso-tv-strict-complementarity}, or \eqref{eq:dm-iso-tv-strict-complementarity}.  Then the exact DRS map
\[
    T=(1-\alpha)I+\alpha R_{\gamma g}R_{\gamma f}
\]
is Fr\'echet differentiable at \(z_\star\), and
\begin{equation}
\label{eq:dm-local-drs-jacobian}
    L_\star:=DT(z_\star)
    =(1-\alpha)I
      +\alpha(2Q_\star-I)(2G_\star^{-1}-I).
\end{equation}
Consequently, for the exact DRS iteration,
\begin{equation}
\label{eq:dm-local-outer-increment}
    \Delta z_k
    =(L_\star-I)(z_{k-1}-z_\star)
      +o(\norm{z_{k-1}-z_\star}).
\end{equation}
The same expansion holds for an inexact CR-DRS iteration whenever its outer perturbation is \(o(\norm{z_{k-1}-z_\star})\).
\end{proposition}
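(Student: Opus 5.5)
The proof is a chain-rule computation followed by a first-order Taylor expansion of $T$ about its fixed point.

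\emph{Differentiability of the smooth reflection.} Since $f$ is $C^2$ and $p(z)=P_{\gamma f}z$ solves $\gamma\grad f(p)+p=z$, the implicit-function theorem applies. The Jacobian of $p\mapsto\gamma\grad f(p)+p$ is $G(z)=I+\gamma\grad^2 f(p(z))\succeq I$, which is invertible. Hence $P_{\gamma f}$ is $C^1$ near $z_\star$, with $DP_{\gamma f}(z_\star)=G_\star^{-1}$. This is the same computation as in Proposition~\ref{prop:prox-path-focused}, but here it needs only continuity of $\grad^2 f$, not a Lipschitz bound. It follows that $R_{\gamma f}=2P_{\gamma f}-I$ is differentiable at $z_\star$ with derivative $2G_\star^{-1}-I$. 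Moreover $R_{\gamma f}z_\star=w_\star$ by \eqref{eq:dm-prox-graph-point}.

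\emph{Chain rule.} By assumption $P_{\gamma g}$ is Fr\'echet differentiable at $w_\star$, so $R_{\gamma g}=2P_{\gamma g}-I$ is differentiable there with derivative $2Q_\star-I$. The composition $R_{\gamma g}\circ R_{\gamma f}$ is then Fr\'echet differentiable at $z_\star$ by the chain rule, which needs only differentiability of the outer map at the image point $w_\star$. Taking the affine combination with $(1-\alpha)I$ gives \eqref{eq:dm-local-drs-jacobian}. The claim that the hypotheses of Proposition~\ref{prop:dm-active-prox-derivative} and the three strict-complementarity conditions suffice is already contained in that proposition and in the $\ell_1$, anisotropic TV and isotropic TV discussions.

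\emph{Exact increment.} For the exact iteration $z_k=Tz_{k-1}$ and $Tz_\star=z_\star$. Fr\'echet differentiability at $z_\star$ gives
\[
z_k=z_\star+L_\star(z_{k-1}-z_\star)+o(\norm{z_{k-1}-z_\star}).
\]
Subtracting $z_{k-1}=z_\star+(z_{k-1}-z_\star)$ yields \eqref{eq:dm-local-outer-increment}.

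\emph{Inexact increment.} For CR-DRS, write $z_k=Tz_{k-1}+\epsilon_k$. By Lemma~\ref{lem:outer-perturbation-focused}, $\norm{\epsilon_k}\le 2\alpha\norm{e_{k-1}}$. Under the stated hypothesis $\epsilon_k=o(\norm{z_{k-1}-z_\star})$, this term is absorbed into the remainder.

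The only delicate point is that $P_{\gamma g}$ is assumed differentiable only at the single point $w_\star$, not on a neighborhood. I expect this to be the main obstacle. It is resolved because the chain rule for Fr\'echet derivatives requires differentiability of the outer map only at $R_{\gamma f}z_\star$, together with differentiability of the inner map at $z_\star$, which we have.
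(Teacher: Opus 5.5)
Your proposal is correct and follows the paper's proof: the implicit-function theorem on $\gamma\grad f(p)+p=z$ gives $DP_{\gamma f}(z_\star)=G_\star^{-1}$, the chain rule (which needs $P_{\gamma g}$ differentiable only at the image point $w_\star=R_{\gamma f}z_\star$) gives \eqref{eq:dm-local-drs-jacobian}, and a first-order Taylor expansion of $T$ at its fixed point gives \eqref{eq:dm-local-outer-increment}, with an $o(\norm{z_{k-1}-z_\star})$ perturbation absorbed into the remainder. Your added remarks, that only continuity of $\grad^2 f$ is needed and that differentiability of $P_{\gamma g}$ at the single point $w_\star$ suffices, are correct refinements of the same argument.
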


\begin{proof}
The optimality equation for \(P_{\gamma f}\) is
\(\gamma\grad f(p)+p=z\).  The implicit-function theorem gives
\(DP_{\gamma f}(z_\star)=G_\star^{-1}\) and hence
\(DR_{\gamma f}(z_\star)=2G_\star^{-1}-I\).  Also
\(DR_{\gamma g}(w_\star)=2Q_\star-I\).  The chain rule gives
\eqref{eq:dm-local-drs-jacobian}; Taylor expansion of \(T\) gives
\eqref{eq:dm-local-outer-increment}.  An \(o(\norm{z_{k-1}-z_\star})\) perturbation does not change the first-order term.
\end{proof}

Because a proximal map is firmly nonexpansive, its derivative satisfies
\begin{equation}
\label{eq:dm-Q-firm}
    \norm{Q_\star h}^2\le\ip{Q_\star h}{h}
    \qquad(h\in\Hcal).
\end{equation}
Thus \(Q_\star\) is a contraction.  In the \(\ell_1\) and anisotropic-TV cases it is an orthogonal projector; for isotropic TV it is the positive contraction \eqref{eq:dm-iso-tv-Q}.  Formula~\eqref{eq:dm-local-drs-jacobian} now gives the methodological point: after active identification, the new right-hand sides are not arbitrary external vectors.  They are generated, to first order, by repeated action of one fixed operator \(L_\star\).

The next statement turns this observation into the active-subspace condition of Proposition~\ref{prop:dm-active-subspace}.

\begin{proposition}[DRS-generated active forcing]
\label{prop:dm-active-forcing}
Let \(\mathcal U\subseteq\Hcal\) be an \(L_\star\)-invariant subspace and suppose
\begin{align}
    \frac{\dist(z_k-z_\star,\mathcal U)}{\norm{z_k-z_\star}}&\to0,
    \label{eq:dm-outer-active-subspace}\\
    \norm{(I-L_\star)(z_k-z_\star)}&\ge
       c_z\norm{z_k-z_\star}
       \quad\text{eventually}
    \label{eq:dm-increment-nondegenerate}
\end{align}
for some \(c_z>0\).  Suppose also that
\begin{equation}
\label{eq:dm-anchor-smaller-than-motion}
    \norm{a_{k-1}-p_{k-1}}=o(\norm{\Delta z_k}).
\end{equation}
Define
\begin{equation}
\label{eq:dm-active-forcing-subspace}
    \mathcal V:=G_\star^{-1}(I-L_\star)\mathcal U.
\end{equation}
Then
\begin{equation}
\label{eq:dm-active-direction-confinement}
    \frac{\dist(d_k,\mathcal V)}{\norm{d_k}}\longrightarrow0.
\end{equation}
Consequently, if \(B_k\to G_\star\) on \(\mathcal V\) in the sense of
\eqref{eq:dm-subspace-model}, then the transported Dennis--Mor\'e condition holds.
\end{proposition}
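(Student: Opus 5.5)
We decompose $d_k$ with \eqref{eq:dm-direction-decomposition}, show that the dominant term is $-G_\star^{-1}\Delta z_k$, and then use Proposition~\ref{prop:dm-local-drs-jacobian} to place $\Delta z_k$ near $(L_\star-I)\mathcal U$.

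Write $\varepsilon_{k-1}:=\norm{z_{k-1}-z_\star}$ and $\zeta_{k-1}:=z_{k-1}-z_\star$. Proposition~\ref{prop:dm-local-drs-jacobian} (with outer perturbation $o(\varepsilon_{k-1})$, which we take as implicit in the local regime) gives
\[
  \Delta z_k=(L_\star-I)\zeta_{k-1}+o(\varepsilon_{k-1}).
\]
By \eqref{eq:dm-increment-nondegenerate}, $\norm{\Delta z_k}\ge c_z\varepsilon_{k-1}-o(\varepsilon_{k-1})$, so $\norm{\Delta z_k}\asymp\varepsilon_{k-1}$. The upper bound uses $\norm{L_\star-I}<\infty$.

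Now substitute into \eqref{eq:dm-direction-decomposition}. Replace $G_{k-1}^{-1}$ by $G_\star^{-1}$, with error $\norm{G_{k-1}^{-1}-G_\star^{-1}}\norm{\Delta z_k}=o(\norm{\Delta z_k})$; this holds because $p_{k-1}\to p_\star$, $\grad^2 f$ is continuous, and $G\succeq mI$ uniformly. The quadratic remainder is $O(\norm{\Delta z_k}^2)=o(\norm{\Delta z_k})$, and $e_{k-1}=o(\norm{\Delta z_k})$ by \eqref{eq:dm-anchor-smaller-than-motion}. Hence
\[
  d_k=-G_\star^{-1}(L_\star-I)\zeta_{k-1}+o(\varepsilon_{k-1})
     =G_\star^{-1}(I-L_\star)\zeta_{k-1}+o(\varepsilon_{k-1}).
\]

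Next decompose $\zeta_{k-1}=\Pi_{\mathcal U}\zeta_{k-1}+(I-\Pi_{\mathcal U})\zeta_{k-1}$. By \eqref{eq:dm-outer-active-subspace}, the second piece is $o(\varepsilon_{k-1})$. The first piece maps to
\[
  G_\star^{-1}(I-L_\star)\Pi_{\mathcal U}\zeta_{k-1}\in\mathcal V.
\]
The $L_\star$-invariance of $\mathcal U$ guarantees that $(I-L_\star)\mathcal U\subseteq\mathcal U$ is a linear subspace. Its image $\mathcal V$ under the bounded invertible $G_\star^{-1}$ is a linear subspace of the finite-dimensional space $\Hcal$, hence closed. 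Therefore $\dist(d_k,\mathcal V)=o(\varepsilon_{k-1})$.

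\textbf{Main obstacle: the lower bound on $\norm{d_k}$.} Dividing the distance by $\norm{d_k}$ requires $\norm{d_k}\gtrsim\varepsilon_{k-1}$. From the expansion above,
\[
  \norm{d_k}\ge \norm{G_\star}^{-1}\norm{(I-L_\star)\zeta_{k-1}}-o(\varepsilon_{k-1})\ge (c_z/\norm{G_\star})\,\varepsilon_{k-1}-o(\varepsilon_{k-1}),
\]
using \eqref{eq:dm-increment-nondegenerate} together with $\norm{G_\star^{-1}h}\ge\norm{h}/\norm{G_\star}$. This gives \eqref{eq:dm-active-direction-confinement}.

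The consequence then follows from Proposition~\ref{prop:dm-active-subspace} with $\mathcal U_k:=\mathcal V$. Condition \eqref{eq:dm-subspace-direction} is exactly the confinement just proved, since $\norm{(I-\Pi_{\mathcal V})d_k}=\dist(d_k,\mathcal V)$. Condition \eqref{eq:dm-subspace-model} is the stated hypothesis. Boundedness \eqref{eq:dm-subspace-bounded} comes from $\sup_k\norm{H_k}<\infty$ and the paired spectral bounds, or it may be assumed as part of "$B_k\to G_\star$ on $\mathcal V$ in the sense of" Proposition~\ref{prop:dm-active-subspace}.
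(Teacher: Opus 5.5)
Your proof is correct and follows the paper's argument essentially step for step. You linearize $\Delta z_k$, absorb $e_{k-1}$, the drift $G_{k-1}^{-1}-G_\star^{-1}$ and the quadratic remainder as little-$o$ terms, project $z_{k-1}-z_\star$ onto $\mathcal U$, and bound $\norm{d_k}$ below via $c_z$ and $G_\star$. The only cosmetic difference is that you normalize by $\norm{z_{k-1}-z_\star}$ rather than by $\norm{\Delta z_k}$.

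One small correction to your last line: $\sup_k\norm{H_k}<\infty$ bounds $B_k=H_k^{-1}$ only from below. The upper bound on $\norm{B_k}$ needed for \eqref{eq:dm-subspace-bounded} comes from \eqref{eq:sr1-spectral-bounds} in the SR1 case and must simply be assumed for BFGS, as the paper's proof tacitly does.
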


\begin{proof}
Let \(\zeta_{k-1}=z_{k-1}-z_\star\).  Equations~\eqref{eq:dm-local-outer-increment} and
\eqref{eq:dm-outer-active-subspace}, together with invariance of \(\mathcal U\), give
\[
    \dist(\Delta z_k,(L_\star-I)\mathcal U)
       =o(\norm{\zeta_{k-1}}).
\]
Condition~\eqref{eq:dm-increment-nondegenerate} implies
\(\norm{\Delta z_k}\ge(c_z+o(1))\norm{\zeta_{k-1}}\), so the preceding distance is
\(o(\norm{\Delta z_k})\).  Since \(G_{k-1}^{-1}\to G_\star^{-1}\),
\eqref{eq:dm-direction-decomposition} and
\eqref{eq:dm-anchor-smaller-than-motion} yield
\[
    \dist(d_k,\mathcal V)=o(\norm{\Delta z_k}).
\]
The same relations and positive definiteness of \(G_\star\) give
\(\norm{d_k}\ge c_d\norm{\Delta z_k}\) for some \(c_d>0\) and all large \(k\).
This proves \eqref{eq:dm-active-direction-confinement}.  Proposition~\ref{prop:dm-active-subspace} gives the final claim.
\end{proof}

For \(g(x)=\lambda\norm{x}_1\), the active subspace in this proposition is generated by a DRS linearization whose nonsmooth factor is the support projector \eqref{eq:dm-l1-Q}.  For anisotropic TV it is generated by the edge-tangent projector \eqref{eq:dm-aniso-tv-Q}; for isotropic TV, by the positive contraction \eqref{eq:dm-iso-tv-Q}.  Hence strict complementarity supplies a concrete chain:
\[
   \begin{gathered}
   \text{stable support or edge pattern}
   \Longrightarrow Q_\star
   \Longrightarrow L_\star\\
   \Longrightarrow \mathcal V
   \Longrightarrow
     \begin{gathered}
       \text{a subspace on which}\\
       B_k\text{ must learn }G_\star
     \end{gathered}.
   \end{gathered}
\]
This does not itself prove the transported Dennis--Mor\'e condition.  It shows, however, that the outer forcing is structured by the identified nonsmooth geometry and that the quasi-Newton model need only become accurate on the resulting DRS-generated subspace.

If \(f\) is quadratic and \(P_{\gamma g}\) is affine on the identified region, then \(T\) is affine there.  The outer errors and increments lie exactly in the finite Krylov space
\begin{equation}
\label{eq:dm-krylov-space}
    \mathcal K
    :=\operatorname{span}\{(z_{k_0}-z_\star),
        L_\star(z_{k_0}-z_\star),
        L_\star^2(z_{k_0}-z_\star),\ldots\}.
\end{equation}
In finite dimensions this chain stabilizes after at most \(\dim\Hcal\) steps, and often much earlier when only a few slow DRS modes remain.  Thus the counterexample above does not imply perpetual failure: it identifies the need to learn the stabilized subspace
\(
 G_\star^{-1}(I-L_\star)\mathcal K
\).

\subsection{Learning the active subspace}

The active-subspace requirement can be verified exactly for the rank-one update in a locally quadratic model.

\begin{proposition}[Finite SR1 learning on a quadratic active subspace]
\label{prop:dm-sr1-active-learning}
Let $G_\star$ be constant and suppose every accepted exact secant satisfies
$y_i=G_\star s_i$.  Let $B_{i+1}$ be obtained from $B_i$ by the direct SR1 update \eqref{eq:direct-sr1}, and assume each displayed update is well defined.  Set
$E_i:=B_i-G_\star$.  Then
\begin{equation}
\label{eq:dm-sr1-error-update}
    E_{i+1}
    =E_i-\frac{E_is_i s_i^\top E_i}{s_i^\top E_i s_i},
\end{equation}
and
\begin{equation}
\label{eq:dm-sr1-kernel-inheritance}
    E_{i+1}s_i=0,
    \qquad
    \ker(E_i)\subseteq\ker(E_{i+1}).
\end{equation}
Consequently, if a finite set of accepted directions $s_{i_1},\ldots,s_{i_r}$ spans a subspace $\mathcal V$, then after those updates
\begin{equation}
\label{eq:dm-sr1-exact-on-active}
    (B-G_\star)v=0
    \qquad\text{for every }v\in\mathcal V.
\end{equation}
In particular, transported Dennis--Mor\'e holds for every later transported direction contained in $\mathcal V$.
\end{proposition}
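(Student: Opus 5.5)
The plan is to compute directly from the direct SR1 formula \eqref{eq:direct-sr1} with the exact secant $y_i=G_\star s_i$. Set $v_i=y_i-B_is_i=G_\star s_i-B_is_i=-E_is_i$. Then $\ip{v_i}{s_i}=-s_i^\top E_is_i$ and $v_iv_i^\top=E_is_is_i^\top E_i$, where we use symmetry of $E_i$. Symmetry holds because $B_0$ is a scalar multiple of $I$, every SR1 correction is symmetric, and $G_\star$ is self-adjoint. Substituting into $B_{i+1}=B_i+v_iv_i^\top/\ip{v_i}{s_i}$ and subtracting $G_\star$ from both sides gives \eqref{eq:dm-sr1-error-update} immediately. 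The well-definedness assumption is exactly $s_i^\top E_is_i\ne0$.

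For \eqref{eq:dm-sr1-kernel-inheritance}, I will apply the updated error to $s_i$:
\[
E_{i+1}s_i=E_is_i-E_is_i\,\frac{s_i^\top E_is_i}{s_i^\top E_is_i}=0.
\]
If $E_iw=0$, then $s_i^\top E_iw=\ip{E_is_i}{w}=0$ by symmetry. Hence the rank-one correction applied to $w$ vanishes, and $E_{i+1}w=0$.

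The conclusion \eqref{eq:dm-sr1-exact-on-active} then follows by induction over the accepted updates, including any intermediate accepted updates between the $i_\ell$. After update $i_\ell$, we have $s_{i_\ell}\in\ker E_{i_\ell+1}$. Kernel inheritance preserves this through every later update; skipped pairs leave $B$ unchanged, so they are harmless. After the last of the listed updates, all of $s_{i_1},\dots,s_{i_r}$ lie in the kernel. Since the kernel is a linear subspace, it contains their span $\mathcal V$.

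For the final sentence, I take $d_k\in\mathcal V$ and an index $k$ late enough that $B_k$ is built after these updates. Then $(B_k-G_\star)d_k=0$, and \eqref{eq:dm-equivalent-fixed-Hessian} with constant $G_\star$ gives \eqref{eq:dm-real-obstacle}. One caveat concerns the case where $B_k$ is rebuilt from a scalar initialization by replaying a limited memory: if $m_q<\infty$ the oldest pair can be discarded and kernel inheritance need not persist. I will therefore state that the argument applies to a model containing all those updates in sequence, for example full memory or a memory large enough to retain them. This bookkeeping, together with the symmetry needed for $v_iv_i^\top=E_is_is_i^\top E_i$, is the only real subtlety; everything else is direct algebra.
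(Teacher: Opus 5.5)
Your proposal is correct and follows the paper's proof essentially step by step. You substitute the SR1 correction vector $-E_is_i$ into \eqref{eq:direct-sr1}, evaluate the error update at $s_i$, note that the rank-one correction vanishes on $\ker E_i$, and collect the spanning directions in the kernel. Your explicit remarks on the symmetry of $E_i$ (needed for $v_iv_i^\top=E_is_is_i^\top E_i$) and on the limited-memory replay caveat fill in details the paper leaves implicit; they do not change the route.
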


\begin{proof}
With $y_i=G_\star s_i$, the SR1 correction vector is
$y_i-B_is_i=-E_is_i$.  Substitution into \eqref{eq:direct-sr1} gives
\eqref{eq:dm-sr1-error-update}.  Multiplying by $s_i$ gives
$E_{i+1}s_i=0$.  If $E_iv=0$, then the correction term in
\eqref{eq:dm-sr1-error-update} also vanishes on $v$, proving kernel inheritance.
Each accepted independent direction therefore enlarges the kernel of the Hessian error, and spanning of $\mathcal V$ gives \eqref{eq:dm-sr1-exact-on-active}.
\end{proof}

This proposition gives a complete deterministic route in the quadratic, locally affine case.  Regularity of $g$ confines the DRS forcing to a fixed active subspace.  Accepted SR1 secants then recover the true curvature on that subspace.  Extension to a varying Hessian requires additional stability and denominator bounds.

BFGS does not have the same kernel-inheritance identity for arbitrary secant directions.  It preserves all earlier secant equations under additional structure, for example suitable conjugacy, and classical full-memory theory can yield full-model or Dennis--Mor\'e convergence under its usual local assumptions.  For persistent BFGS or fixed-memory L-BFGS driven by general DRS directions, proving \eqref{eq:dm-subspace-model} remains a separate task.  The local structure of $g$ makes that task plausible by reducing it to a fixed, often low-dimensional subspace; it does not make it automatic.

The theoretical status is therefore precise.  Exact pairing removes the inverse-consistency condition.  A valid DRS step can still request an unlearned direction.  Transported Dennis--Mor\'e follows from full-model convergence or from convergence on the DRS-generated active subspace.  For quadratic curvature, paired SR1 learns any finitely excited active subspace exactly.  Establishing an analogous active-subspace convergence theorem for persistent BFGS/L-BFGS and general nonlinear $f$ is the main remaining local question.

\section{Practical implementation and safeguards}
\label{sec:heuristics}

The certified algorithm is the theorem-bearing form.  The following choices describe the faster implementation used in the numerical tests.  None of them introduces a line search inside the transported predictor.

\paragraph{Predictor depth}
With a frozen paired model, Proposition~\ref{prop:frozen-cancellation-new} gives
$\eta_k^{\rm eff}=1-\prod_j(1-\eta_{k,j})$.  Under exact pairing and $\nu_{\rm qn}=1$, the rule \eqref{eq:predictor-eta} reduces to $\eta_{k,j}=\bar\eta_k$.  Thus a fixed conservative cap such as $\bar\eta_k\in[0.3,0.4]$ makes $K=4$ or $K=8$ move much closer to the same model minimizer, while increasing $K$ further gives little extra displacement.

\paragraph{Model check and damping schedule}
After the first true current gradient, record
\begin{equation}
\label{eq:model-check}
    \epsilon_k^{\rm model}
    :=\frac{\norm{r_k(\widehat x_k)-\widetilde r_{k,K}}}
            {\max\{1,\norm{r_k(\widehat x_k)}\}}.
\end{equation}
A fully explicit, no-line-search schedule is obtained by fixing
$\eta_{\rm safe}\in(0,1)$ and $\tau_\eta>0$ and setting
\begin{equation}
\label{eq:damping-trust-switch}
  \bar\eta_{k+1}:=
  \begin{cases}
    1, & \epsilon_k^{\rm model}\le\tau_\eta
         \ \text{and the endpoint tests were accepted},\\
    \eta_{\rm safe}, & \text{otherwise}.
  \end{cases}
\end{equation}
The initial cap is $\bar\eta_0:=\eta_{\rm safe}$.  Rule \eqref{eq:damping-trust-switch} uses only information already obtained at outer iteration $k$; it does not evaluate any new trial point at iteration $k+1$.  If the model remains accurate, it also permits the condition $\eta_k^{\rm eff}\to1$ in Theorem~\ref{thm:transported-dm-focused}.

\paragraph{Direct/inverse consistency}
For a nonzero probe vector $v$, define
\begin{equation}
\label{eq:BH-consistency}
  \epsilon_k^{BH}(v)
  :=\frac{\norm{H_k(B_kv)-v}}{\norm{v}}.
\end{equation}
The reference probe is $v=\bar r_k(a_{k-1})$ when this vector is nonzero.  A threshold $\tau_{BH}>0$ is fixed in advance; if $\epsilon_k^{BH}(v)>\tau_{BH}$, the model is rebuilt from $\mathcal M_k$ or restarted from \eqref{eq:initial-qn-scaling}.

\paragraph{Displacement cap}
Choose $\chi_\Delta>0$.  For $k\ge1$, set
\begin{equation}
\label{eq:predictor-radius}
  \Delta_k:=\chi_\Delta
  \max\left\{\norm{z_k-z_{k-1}},
              \gamma\norm{\bar r_k(a_{k-1})}\right\},
\end{equation}
and set $\Delta_0:=\chi_\Delta\gamma\norm{\bar r_0(a_{-1})}$.  Let $\eta_{k,j}^{\rm raw}$ denote the value in \eqref{eq:predictor-eta}.  The capped value is
\begin{equation}
\label{eq:predictor-radius-cap}
  \eta_{k,j}:=
  \sup\left\{\eta\in[0,\eta_{k,j}^{\rm raw}]:
  \norm{\widetilde x_{k,j}+\eta d_{k,j}-a_{k-1}}\le\Delta_k\right\}.
\end{equation}
This is a one-dimensional quadratic calculation.  It needs no value or gradient evaluation.

\paragraph{Other safeguards}
The certified reference algorithm checks finite directions, positive model descent through \eqref{eq:predictor-eta}, the endpoint tests \eqref{eq:descent-direction-certificate}--\eqref{eq:residual-certificate}, the exact-pair curvature threshold \eqref{eq:pair-curvature-threshold}, and the consistency test \eqref{eq:BH-consistency}.  SR1 updates are skipped when \eqref{eq:sr1-skip} or \eqref{eq:sr1-spectral-bounds} fails.  Pseudo pairs of the form $y=B_ks$ are never stored.

\paragraph{Cost}
For dimension $n$, finite memory $m_q$, and depth $K$, an L-BFGS inverse action costs $O(m_qn)$.  A simple replay of the paired direct updates costs $O(m_q^2n)$, and the frozen predictor costs $O(Km_q^2n)$.  Storage is $O(m_qn)$.  The intended regime is one in which a new $\grad f$ costs much more than this linear algebra.

\paragraph{PRS and ADMM}
PRS is obtained by taking $\alpha=1$ whenever the contraction assumptions allow it.  For the ADMM form below, let $A:\Hcal\to\Kcal$ be a bounded linear map between real Hilbert spaces, let $A^*$ be its adjoint, let $c_k\in\Kcal$ be the current affine center, and let $\rho_{\!A}>0$ be the fixed quadratic-penalty parameter.  The expensive block often has the form
\begin{equation}
\label{eq:admm-affine-family-focused}
    \psi_k(x)=f(x)+\frac{\rho_{\!A}}{2}\norm{Ax-c_k}^2,
    \qquad
    \grad^2\psi_k(x)=\grad^2f(x)+\rho_{\!A} A^*A.
\end{equation}
ADMM is DRS on the Fenchel dual.  Again, the changing outer variables enter only through an affine term.  Residual transport, BFGS and SR1 reuse, and the predictor apply with $I$ replaced by the known quadratic curvature $\rho_{\!A}A^*A$.

\section{Numerical results}
\label{sec:numerics}

The experiments test the fixed-budget predictor, not the fallback branch of Algorithm~\ref{alg:certified-crdrs}.  Their purpose is to measure progress per expensive gradient.  They do not by themselves prove that the fixed-budget runs satisfy the certificates of Section~\ref{sec:global}.  Let $F_\star$ denote a high-accuracy reference objective value.  Every reported relative objective gap is
\begin{equation}
\label{eq:numerical-relative-gap}
  G(x):=\frac{\max\{F(x)-F_\star,0\}}
              {\max\{1,|F_\star|\}}.
\end{equation}
For a reference solution $x_\star^{\rm ref}$, the reported relative reconstruction error is
\begin{equation}
\label{eq:numerical-relative-error}
  E_{\rm rel}(x):=
  \frac{\norm{x-x_\star^{\rm ref}}}
       {\norm{x_\star^{\rm ref}}},
  \qquad x_\star^{\rm ref}\ne0.
\end{equation}
A ``paired improvement'' is computed for each seed first and then summarized by its median.

\subsection{Direct DRS with \texorpdfstring{$\ell_1$}{l1} regularization}

We use three convex $n=128$ problem families with $g(x)=\lambda\norm{x}_1$ and $\lambda>0$: an ill-conditioned quadratic inverse problem, correlated logistic regression, and a robust pseudo-Huber inverse problem.  Each result is the median over eight seeds.  Unless stated otherwise, every method receives 200 true gradients: 100 outer DRS iterations and two gradients per smooth proximal call.  The L-BFGS memory is ten.

\begin{table}[t]
\centering
\caption{Median final relative objective gap at 200 true gradients.  ``Reset'' warm-starts the state but resets L-BFGS; ``persistent'' keeps curvature but has no transported displacement.}
\label{tab:focused-main}
\scriptsize
\begin{tabular}{lrrrrr}
\toprule
Problem & Reset & Persistent & $K=1$ & $K=8$ & $K=16$\\
\midrule
Quadratic & $1.97\!\times\!10^{-2}$ & $5.59\!\times\!10^{-3}$ & $6.70\!\times\!10^{-4}$ & $2.82\!\times\!10^{-5}$ & $2.66\!\times\!10^{-5}$\\
Logistic & $1.41\!\times\!10^{-3}$ & $2.73\!\times\!10^{-4}$ & $4.10\!\times\!10^{-5}$ & $6.33\!\times\!10^{-6}$ & $6.16\!\times\!10^{-6}$\\
Pseudo-Huber & $4.07\!\times\!10^{-2}$ & $2.34\!\times\!10^{-2}$ & $1.49\!\times\!10^{-2}$ & $9.56\!\times\!10^{-3}$ & $9.48\!\times\!10^{-3}$\\
\bottomrule
\end{tabular}
\end{table}

At equal cost, $K=8$ improves the paired final gap over $K=1$ by $96.96\%$, $88.04\%$, and $37.54\%$ on the quadratic, logistic, and pseudo-Huber families.  It wins all 24 seedwise comparisons.  The small change from $K=8$ to $K=16$ is consistent with the effective-damping saturation in Proposition~\ref{prop:frozen-cancellation-new}.

\begin{figure}[t]
\centering
\begin{subfigure}[t]{0.31\textwidth}
\includegraphics[width=\linewidth]{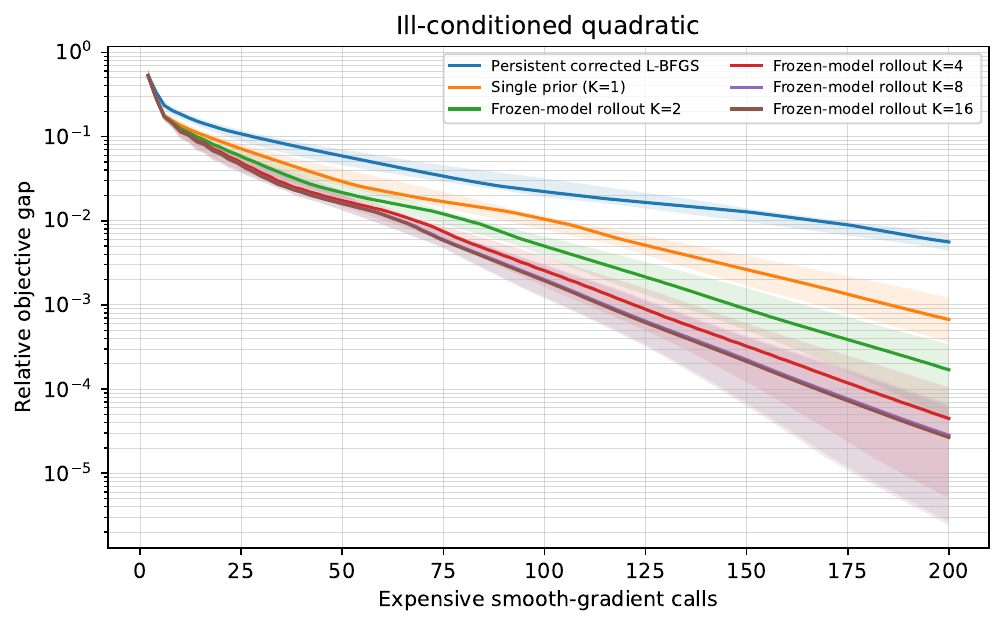}
\caption{Quadratic}
\end{subfigure}\hfill
\begin{subfigure}[t]{0.31\textwidth}
\includegraphics[width=\linewidth]{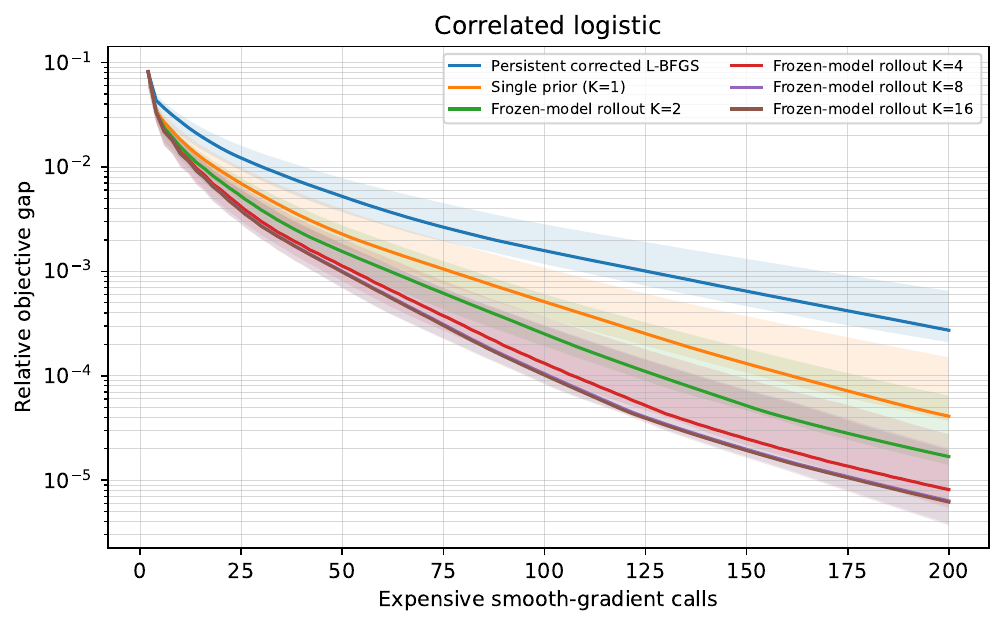}
\caption{Logistic}
\end{subfigure}\hfill
\begin{subfigure}[t]{0.31\textwidth}
\includegraphics[width=\linewidth]{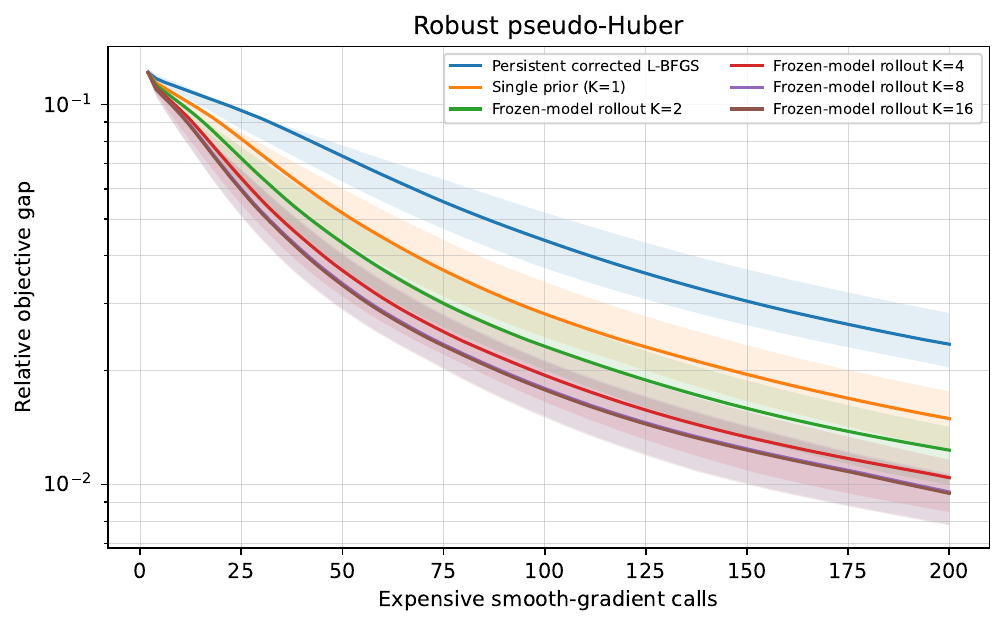}
\caption{Pseudo-Huber}
\end{subfigure}
\caption{Median relative objective gap versus true smooth-gradient calls.  Shaded regions are interquartile ranges over eight seeds.}
\label{fig:focused-objective-curves}
\end{figure}

\subsection{One-gradient outer iterations}

The most aggressive runs use one true gradient per outer iteration.  Table~\ref{tab:focused-one-gradient} compares these runs with the earlier $K=1$ method and with much larger non-predictive inner budgets.

\begin{table}[t]
\centering
\caption{Median final gap after 100 outer iterations.  Parentheses give total true-gradient calls.}
\label{tab:focused-one-gradient}
\scriptsize
\begin{tabular}{@{}lrrrr@{}}
\toprule
Problem & $K=8$ (100) & $K=16$ (100) & $K=1$ (200) & Persistent (1000)\\
\midrule
Quadratic & $2.06\!\times\!10^{-5}$ & $1.83\!\times\!10^{-5}$ & $6.70\!\times\!10^{-4}$ & $2.56\!\times\!10^{-5}$\\
Logistic & $5.22\!\times\!10^{-6}$ & $4.75\!\times\!10^{-6}$ & $4.10\!\times\!10^{-5}$ & $6.25\!\times\!10^{-6}$\\
Pseudo-Huber & $9.66\!\times\!10^{-3}$ & $9.38\!\times\!10^{-3}$ & $1.49\!\times\!10^{-2}$ & $9.55\!\times\!10^{-3}$\\
\bottomrule
\end{tabular}
\end{table}

With 100 gradients, $K=8$ improves the paired gap over the $K=1$, 200-gradient method by $98.03\%$, $91.10\%$, and $36.61\%$.  These are finite-budget empirical results.  A production implementation should also monitor the smooth-prox residual; a small approximate DRS residual alone can be misleading when the smooth proximal problem is severely under-solved.

\subsection{Comparison with FISTA}

For directly proximal $\ell_1$ regularization, FISTA is a natural first-order baseline \cite{BeckTeboulle2009}.  All methods in Table~\ref{tab:focused-fista} use 200 gradients.

\begin{table}[t]
\centering
\caption{Median final relative objective gap at 200 gradients.  The last column is the median paired improvement of $K=8$ over FISTA.}
\label{tab:focused-fista}
\scriptsize
\begin{tabular}{@{}lrrrr@{}}
\toprule
Problem & FISTA & CR $K=1$ & CR $K=8$ & Paired gain\\
\midrule
Quadratic & $1.664\!\times\!10^{-3}$ & $6.701\!\times\!10^{-4}$ & $2.824\!\times\!10^{-5}$ & $98.68\%$\\
Logistic & $8.877\!\times\!10^{-5}$ & $4.100\!\times\!10^{-5}$ & $6.327\!\times\!10^{-6}$ & $93.53\%$\\
Pseudo-Huber & $1.779\!\times\!10^{-2}$ & $1.491\!\times\!10^{-2}$ & $9.561\!\times\!10^{-3}$ & $47.26\%$\\
\bottomrule
\end{tabular}
\end{table}

On the quadratic family, the median number of calls needed to reach a relative gap of $10^{-3}$ falls from 190 for FISTA to 116 for $K=8$.  On logistic regression it falls from 94 to 51.  These comparisons concern the tested structured problems and budgets; they do not imply uniform dominance over accelerated proximal-gradient methods.

\subsection{TV deconvolution through ADMM}

We also test
\begin{equation}
\label{eq:tv-problem-focused}
    \min_x \frac12\norm{Ax-b_{\rm obs}}^2+\lambda\norm{Dx}_1,
\end{equation}
where $A$ is the blur operator, $b_{\rm obs}$ is the observed data, $D$ is the discrete first-difference operator, and $\lambda>0$ is the TV weight.  The $x$-subproblem has the affine structure \eqref{eq:admm-affine-family-focused}.  All variants use 120 ADMM iterations and 240 true gradients.  Here $K=0$ keeps the persistent Hessian but applies no transported state displacement.

\begin{table}[t]
\centering
\caption{Median TV-deconvolution results.  Gap reduction is relative to the curvature-only $K=0$ baseline.}
\label{tab:focused-tv}
\scriptsize
\begin{tabular}{@{}llrrr@{}}
\toprule
Case & $K$ & Relative gap & Gap reduction & Rel. reconstruction error\\
\midrule
1D & 0 & $3.910\!\times\!10^{-3}$ & $0.00\%$ & $5.573\!\times\!10^{-2}$\\
1D & 1 & $2.731\!\times\!10^{-3}$ & $30.15\%$ & $4.967\!\times\!10^{-2}$\\
1D & 8 & $2.572\!\times\!10^{-3}$ & $34.21\%$ & $4.824\!\times\!10^{-2}$\\
1D & 16 & $2.569\!\times\!10^{-3}$ & $34.31\%$ & $4.822\!\times\!10^{-2}$\\
\midrule
2D & 0 & $2.564\!\times\!10^{-2}$ & $0.00\%$ & $2.293\!\times\!10^{-1}$\\
2D & 1 & $1.373\!\times\!10^{-2}$ & $46.46\%$ & $2.154\!\times\!10^{-1}$\\
2D & 8 & $1.010\!\times\!10^{-2}$ & $60.60\%$ & $2.087\!\times\!10^{-1}$\\
2D & 16 & $1.007\!\times\!10^{-2}$ & $60.72\%$ & $2.085\!\times\!10^{-1}$\\
\bottomrule
\end{tabular}
\end{table}

\begin{figure}[t]
\centering
\begin{subfigure}[t]{0.48\textwidth}
\includegraphics[width=\linewidth]{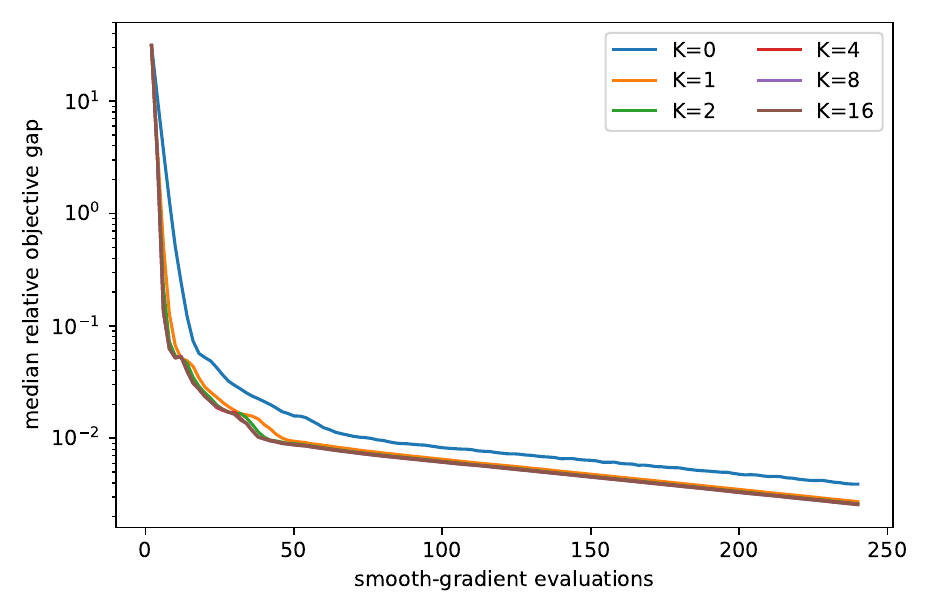}
\caption{One-dimensional deconvolution}
\end{subfigure}\hfill
\begin{subfigure}[t]{0.48\textwidth}
\includegraphics[width=\linewidth]{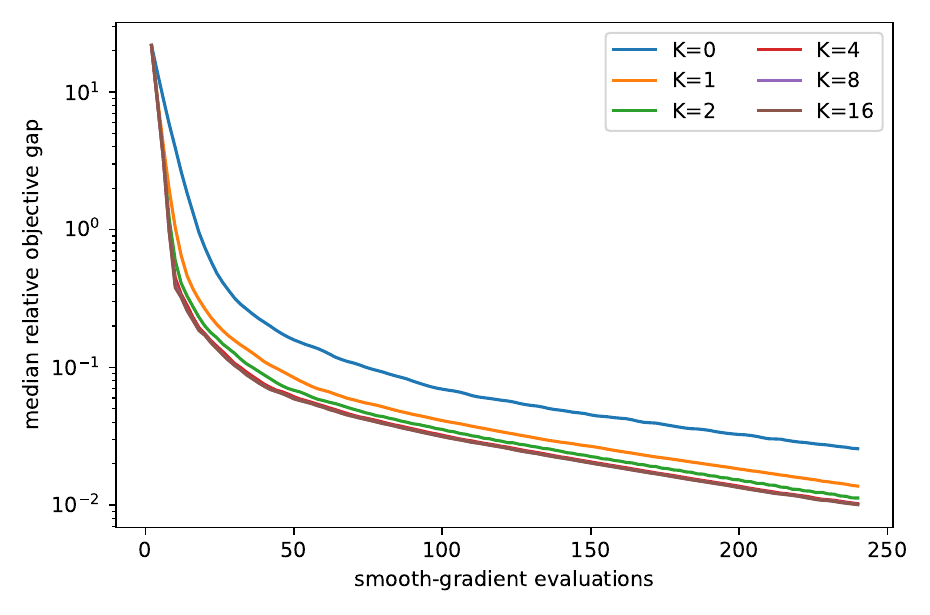}
\caption{Two-dimensional deconvolution}
\end{subfigure}
\caption{Median objective gap versus true gradient evaluations for the ADMM/dual-DR experiment.}
\label{fig:focused-tv-curves}
\end{figure}

\subsection{Scope of the evidence}

The benchmarks are convex and small enough that their gradients are cheap.  They establish reductions in gradient calls, not application-scale wall-clock speedups.  The experiments use L-BFGS; the SR1 realization is included in the formal method but is not tested here.  The fixed-budget runs also do not enforce the fallback certificate in Algorithm~\ref{alg:certified-crdrs}.  The next computational step is therefore to combine the certified acceptance rule with an expensive forward/adjoint model and report end-to-end CPU/GPU timings.

\section{Conclusion}
\label{sec:conclusion-focused}

The smooth proximal problems generated by DRS are not independent.  Their centers change, but their nonlinear curvature is shared.  CR-DRS exploits this by transporting exact residuals, retaining paired BFGS or SR1 curvature, and moving the old proximal state before the next expensive gradient.

The certified algorithm gives a clean division of labor.  The quasi-Newton candidate provides speed.  The descent and residual tests provide a computable certificate.  A fixed safe fallback provides global convergence.  Under the small-gain condition, the outer DRS error and inner proximal-tracking error converge linearly with a constant gradient budget.  Near the solution, exact direct/inverse pairing and the transported Dennis--Mor\'e condition make the candidate superlinear relative to an ordinary warm start; the certificate then accepts it with one gradient.

The transported Dennis--Mor\'e condition is not a formal consequence of DRS.  A valid outer step can request curvature in a direction absent from the current memory.  The local DRS linearization nevertheless shows that, for regular nonsmooth terms, these right-hand-side corrections are generated by a fixed tangent-space dynamics rather than by arbitrary forcing.  Full-model convergence or convergence on that active subspace is sufficient.  In the quadratic case, accepted SR1 directions learn any finitely excited active subspace exactly.  The main remaining local question is an analogous active-subspace convergence result for persistent BFGS/L-BFGS with nonlinear curvature.

The experiments support the computational claim: when gradients are the expensive resource, state transport adds substantial value beyond curvature persistence alone.  The next computational step is application-scale timing with an expensive forward/adjoint model and the certified acceptance rule.

\section*{Acknowledgments}
The author thanks Dr.~Stewart Levin for helpful discussions and suggestions.

\appendix
\section{Notation and basic proximal facts}
\label{app:proximal-facts}

Throughout, $\Hcal$ is a real Hilbert space with inner product $\ip{\cdot}{\cdot}$ and norm $\norm{\cdot}$.  In finite dimensions the same notation is used for Euclidean spaces.  We write $\Gamma_0(\Hcal)$ for the class of proper, lower semicontinuous, convex functions from $\Hcal$ to $\R\cup\{+\infty\}$.  For $f\in\Gamma_0(\Hcal)$, its subdifferential at $x\in\Hcal$ is
\begin{equation}
\label{eq:subdifferential-definition}
  \subd f(x)
  :=\left\{a\in\Hcal:\
  f(y)\ge f(x)+\ip{a}{y-x}\quad\text{for every }y\in\Hcal\right\}.
\end{equation}
If $a\in\subd f(x)$ and $b\in\subd f(y)$, adding the two subgradient inequalities gives
\begin{equation}
\label{eq:subgradient-monotonicity}
  \ip{x-y}{a-b}\ge0.
\end{equation}
Thus $\subd f$ is monotone; in fact it is maximally monotone.  These facts, together with the resolvent and proximity-operator characterizations used below, are developed in \cite[Thm.~20.25, Prop.~23.2, Ex.~23.3, and Prop.~24.1]{BauschkeCombettes2017}.

For $\gamma>0$ and $u\in\Hcal$, define the proximal and reflected proximal operators by
\begin{align}
\label{eq:proximal-definition}
  P_{\gamma f}u
  :=\prox_{\gamma f}u
  &:=\argmin_{x\in\Hcal}
  \left\{f(x)+\frac{1}{2\gamma}\norm{x-u}^2\right\}
  =(\Id+\gamma\subd f)^{-1}u,\\
\label{eq:reflected-proximal-definition}
  R_{\gamma f}u&:=2P_{\gamma f}u-u.
\end{align}
The objective in \eqref{eq:proximal-definition} is strongly convex and has a unique minimizer.  Its first-order optimality condition gives the fundamental equivalence
\begin{equation}
\label{eq:proximal-optimality}
  p=P_{\gamma f}u
  \quad\Longleftrightarrow\quad
  \frac{u-p}{\gamma}\in\subd f(p)
  \quad\Longleftrightarrow\quad
  u\in p+\gamma\subd f(p).
\end{equation}
The same notation and identities will be used for every function in a splitting pair, in particular for $P_{\gamma g}$ and $R_{\gamma g}$.

\begin{lemma}[firm nonexpansiveness of proximal maps]
\label{lem:prox-firmly-nonexpansive}
Let $f\in\Gamma_0(\Hcal)$ and $\gamma>0$.  For every $u,v\in\Hcal$, set $p=P_{\gamma f}u$ and $q=P_{\gamma f}v$.  Then
\begin{equation}
\label{eq:firm}
  \norm{p-q}^2\le \ip{p-q}{u-v},
\end{equation}
and, equivalently,
\begin{equation}
\label{eq:firm-pythagorean}
  \norm{p-q}^2
  +\norm{(u-p)-(v-q)}^2
  \le \norm{u-v}^2.
\end{equation}
Consequently, both $P_{\gamma f}$ and $\Id-P_{\gamma f}$ are firmly nonexpansive, and
\begin{equation}
\label{eq:prox-nonexpansive}
  \norm{P_{\gamma f}u-P_{\gamma f}v}\le\norm{u-v}.
\end{equation}
\end{lemma}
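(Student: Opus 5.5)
The plan is to derive everything from the optimality characterization \eqref{eq:proximal-optimality} together with monotonicity of the subdifferential \eqref{eq:subgradient-monotonicity}. Set $p=P_{\gamma f}u$ and $q=P_{\gamma f}v$. By \eqref{eq:proximal-optimality},
\[
  a:=\frac{u-p}{\gamma}\in\subd f(p),
  \qquad
  b:=\frac{v-q}{\gamma}\in\subd f(q).
\]
Applying \eqref{eq:subgradient-monotonicity} to these two subgradients gives $\ip{p-q}{(u-p)-(v-q)}\ge0$ after multiplying by $\gamma>0$. Expanding the second argument as $(u-v)-(p-q)$ then yields $\ip{p-q}{u-v}-\norm{p-q}^2\ge0$, which is \eqref{eq:firm}.

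For the equivalence with \eqref{eq:firm-pythagorean}, I will write $w:=u-v$ and $e:=p-q$. Expanding the square gives
\[
  \norm{e}^2+\norm{w-e}^2=\norm{w}^2-2\bigl(\ip{e}{w}-\norm{e}^2\bigr).
\]
Hence \eqref{eq:firm-pythagorean} holds if and only if $\ip{e}{w}-\norm{e}^2\ge0$, i.e.\ if and only if \eqref{eq:firm} holds. This argument runs in both directions, so it establishes the stated equivalence.

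The consequences follow directly. Inequality \eqref{eq:firm-pythagorean} is symmetric in $p-q$ and $(u-p)-(v-q)$. Since $(\Id-P_{\gamma f})u=u-p$, the same computation shows $\norm{(u-p)-(v-q)}^2\le\ip{(u-p)-(v-q)}{u-v}$, so $\Id-P_{\gamma f}$ is also firmly nonexpansive. Finally, I will apply Cauchy--Schwarz to \eqref{eq:firm}: $\norm{p-q}^2\le\norm{p-q}\norm{u-v}$. Dividing by $\norm{p-q}$ when it is nonzero gives \eqref{eq:prox-nonexpansive}, and the case $p=q$ is trivial.

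No step is a genuine obstacle. The only point needing care is that $p$ and $q$ exist uniquely and satisfy \eqref{eq:proximal-optimality}. This follows because the objective in \eqref{eq:proximal-definition} is proper, closed and strongly convex for $f\in\Gamma_0(\Hcal)$, and Fermat's rule for it requires the sum rule $\subd\bigl(f+\frac1{2\gamma}\norm{\cdot-u}^2\bigr)=\subd f+\frac1\gamma(\cdot-u)$. That sum rule is valid because the quadratic term is continuous everywhere. As the paper already records these facts via \cite{BauschkeCombettes2017}, I will simply invoke them.
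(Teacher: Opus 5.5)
Your proof is correct and follows essentially the same route as the paper: subgradient monotonicity at the two proximal graph points gives \eqref{eq:firm}, and expanding $\norm{u-v}^2$ with $u-v=(p-q)+[(u-p)-(v-q)]$ gives \eqref{eq:firm-pythagorean}, with \eqref{eq:prox-nonexpansive} following from Cauchy--Schwarz. Your explicit identity makes the two-way equivalence and the firm nonexpansiveness of $\Id-P_{\gamma f}$ a little more transparent than the paper's appeal to symmetry, but the argument is the same.
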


\begin{proof}
By \eqref{eq:proximal-optimality},
\[
  \frac{u-p}{\gamma}\in\subd f(p),
  \qquad
  \frac{v-q}{\gamma}\in\subd f(q).
\]
Applying the monotonicity inequality \eqref{eq:subgradient-monotonicity} to these two graph points yields
\[
  0\le
  \ip{p-q}{\frac{u-p}{\gamma}-\frac{v-q}{\gamma}}
  =\frac{1}{\gamma}
  \ip{p-q}{(u-v)-(p-q)}.
\]
After multiplying by $\gamma$ and rearranging, this is precisely \eqref{eq:firm}.

For the stronger two-term form, write
\[
  u-v=(p-q)+\bigl[(u-p)-(v-q)\bigr].
\]
Expanding the squared norm and using the nonnegativity of the cross term just proved gives
\begin{align*}
  \norm{u-v}^2
  &=\norm{p-q}^2
    +\norm{(u-p)-(v-q)}^2\\
  &\quad
    +2\ip{p-q}{(u-p)-(v-q)}\\
  &\ge
    \norm{p-q}^2
    +\norm{(u-p)-(v-q)}^2,
\end{align*}
which is \eqref{eq:firm-pythagorean}.  This inequality is the defining firm-nonexpansiveness estimate for $P_{\gamma f}$; because its two terms are symmetric, it is also the corresponding estimate for $\Id-P_{\gamma f}$.  Finally, \eqref{eq:prox-nonexpansive} follows immediately by dropping the second nonnegative term in \eqref{eq:firm-pythagorean}.  Equivalently, it follows from \eqref{eq:firm} and Cauchy--Schwarz.  See also \cite[Prop.~4.4, Prop.~23.8, and Cor.~23.11]{BauschkeCombettes2017}.
\end{proof}

The firm estimate is stronger than ordinary nonexpansiveness: it controls both the displacement of the proximal points and the displacement of the complementary residuals $u-P_{\gamma f}u$ and $v-P_{\gamma f}v$.  The same estimate also gives nonexpansiveness of the reflected proximal map.

\begin{lemma}[reflected proximal maps are nonexpansive]
\label{lem:reflected-nonexpansive}
Let $f\in\Gamma_0(\Hcal)$ and $\gamma>0$.  Then
\begin{equation}
\label{eq:reflected-nonexpansive}
  \norm{R_{\gamma f}u-R_{\gamma f}v}\le\norm{u-v}
  \qquad\text{for all }u,v\in\Hcal.
\end{equation}
\end{lemma}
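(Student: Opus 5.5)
The plan is to expand the squared norm of the reflected difference and control the cross term with the firm nonexpansiveness inequality \eqref{eq:firm} from Lemma~\ref{lem:prox-firmly-nonexpansive}. Set $p=P_{\gamma f}u$ and $q=P_{\gamma f}v$. By \eqref{eq:reflected-proximal-definition},
\[
  R_{\gamma f}u-R_{\gamma f}v=2(p-q)-(u-v).
\]

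Expanding the square gives
\[
  \norm{R_{\gamma f}u-R_{\gamma f}v}^2
  =4\norm{p-q}^2-4\ip{p-q}{u-v}+\norm{u-v}^2 .
\]
The first two terms combine to $4\bigl(\norm{p-q}^2-\ip{p-q}{u-v}\bigr)$. By \eqref{eq:firm} this quantity is nonpositive, so the whole expression is at most $\norm{u-v}^2$. Taking square roots yields \eqref{eq:reflected-nonexpansive}.

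There is no real obstacle here. The only point needing care is to use the inner-product form \eqref{eq:firm}, not the weaker bound \eqref{eq:prox-nonexpansive}; the latter alone would only give the constant $3$.

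An equivalent route goes through \eqref{eq:firm-pythagorean}. Write $R_{\gamma f}u-R_{\gamma f}v=(p-q)-\bigl[(u-p)-(v-q)\bigr]$ and expand the square. The cross term now has a minus sign; it equals $-2\ip{p-q}{(u-v)-(p-q)}$, which is nonpositive by the monotonicity computation in the proof of Lemma~\ref{lem:prox-firmly-nonexpansive}. The sum of the two remaining squares is bounded by $\norm{u-v}^2$ via \eqref{eq:firm-pythagorean}. I would present the first version because it is shorter.
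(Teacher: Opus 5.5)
Your proof is correct and is essentially the paper's argument. The paper sets $d=p-q$ and $e=(u-p)-(v-q)$, writes $u-v=d+e$ and $R_{\gamma f}u-R_{\gamma f}v=d-e$, and uses $\ip{d}{e}\ge0$ from \eqref{eq:firm}; this is the identity $\norm{d-e}^2=\norm{d+e}^2-4\ip{d}{e}$, which is what your expansion of $2(p-q)-(u-v)$ computes, and your second route is precisely the paper's presentation.
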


\begin{proof}
Let $p=P_{\gamma f}u$, $q=P_{\gamma f}v$, and set
\[
  d:=p-q,
  \qquad
  e:=(u-p)-(v-q).
\]
Then $u-v=d+e$ and, by \eqref{eq:firm}, $\ip{d}{e}\ge0$.  Moreover,
\[
  R_{\gamma f}u-R_{\gamma f}v
  =(p-(u-p))-(q-(v-q))=d-e.
\]
Therefore
\begin{align*}
  \norm{R_{\gamma f}u-R_{\gamma f}v}^2
  &=\norm{d-e}^2\\
  &=\norm d^2+\norm e^2-2\ip d e\\
  &\le \norm d^2+\norm e^2+2\ip d e
   =\norm{d+e}^2
   =\norm{u-v}^2.
\end{align*}
Taking square roots proves \eqref{eq:reflected-nonexpansive}.  This is also the equivalence between firm nonexpansiveness of $P_{\gamma f}$ and nonexpansiveness of $2P_{\gamma f}-\Id$ in \cite[Prop.~4.4]{BauschkeCombettes2017}.
\end{proof}

\section{Douglas--Rachford splitting}
\label{app:exact-drs}

We consider \eqref{eq:main-problem} in a real Hilbert space $\Hcal$, where $f,g\in\Gamma_0(\Hcal)$.  For an operator $S$, $\Fix(S):=\{x:Sx=x\}$; for a set-valued operator $A$, $\operatorname{zer}A:=\{x:0\in Ax\}$ and $\operatorname{gra}A:=\{(x,u):u\in Ax\}$.  The notation $u_n\rightharpoonup u$ means weak convergence.  In \eqref{eq:DR-sum-rule-qualification}, $\operatorname{sri}$ denotes the strong relative interior.  Douglas--Rachford splitting originates with the alternating-direction construction of \cite{DouglasRachford1956} and was extended to maximal monotone operators by \cite{LionsMercier1979}.  Modern resolvent and minimization formulations are given, for example, in \cite[Secs.~26.3 and 28.3]{BauschkeCombettes2017} and \cite{CombettesPesquet2011}.

For a fixed proximal parameter $\gamma>0$, define
\begin{equation}
\label{eq:DR-reflection-composition}
  N:=R_{\gamma g}R_{\gamma f},
\end{equation}
and, for a relaxation parameter $\alpha>0$, define the relaxed Douglas--Rachford operator
\begin{equation}
\label{eq:DR-operator}
  T:=(1-\alpha)\Id+\alpha N.
\end{equation}
The fixed-point iteration $z_{k+1}=Tz_k$ can be written as
\begin{align}
\label{eq:DR-proxf-first-x}
  x_k&=P_{\gamma f}z_k,\tag{PROXF}\\
\label{eq:DR-proxf-first-y}
  y_k&=P_{\gamma g}(2x_k-z_k),\tag{PROXG}\\
\label{eq:DR-proxf-first-z}
  z_{k+1}&=z_k+2\alpha(y_k-x_k).
\end{align}
Indeed, $R_{\gamma f}z_k=2x_k-z_k$ and
$R_{\gamma g}R_{\gamma f}z_k=2y_k-(2x_k-z_k)$, so
\[
  (1-\alpha)z_k+\alpha Nz_k
  =z_k+2\alpha(y_k-x_k).
\]
The choice $\alpha=1/2$ gives the usual Douglas--Rachford update, whereas $\alpha=1$ gives the Peaceman--Rachford map $N$.  In the general convex setting below, $\alpha\in(0,1)$ makes $T$ an averaged map.  Appendix~\ref{app:linear-drs} gives a larger admissible interval under strong convexity and smoothness.

\begin{proposition}[fixed points, primal--dual pairs, and minimizers]
\label{prop:fixed-point-solution}
Let $\alpha>0$.  Then $\Fix(T)=\Fix(N)$ and
\begin{equation}
\label{eq:DR-fixed-point-characterization}
  \Fix(N)
  =\left\{x+\gamma a:\
  a\in\subd f(x),\ -a\in\subd g(x)\right\}.
\end{equation}
Consequently,
\begin{equation}
\label{eq:DR-shadow-zero-set}
  P_{\gamma f}(\Fix(T))
  =\operatorname{zer}(\subd f+\subd g)
  \subseteq\argmin(f+g).
\end{equation}
If the subdifferential sum rule holds at every minimizer---for example, if
\begin{equation}
\label{eq:DR-sum-rule-qualification}
  0\in\operatorname{sri}(\dom f-\dom g),
\end{equation}
then the inclusion in \eqref{eq:DR-shadow-zero-set} is an equality.  Thus every minimizer is the proximal shadow of at least one Douglas--Rachford fixed point.
\end{proposition}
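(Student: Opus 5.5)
We argue directly from the proximal characterization \eqref{eq:proximal-optimality} applied to $f$ and to $g$, and unwind the reflections. Fix $z\in\Hcal$ and set $x:=P_{\gamma f}z$ and $y:=P_{\gamma g}(2x-z)$. As in the derivation of \eqref{eq:DR-proxf-first-z}, we have $Nz=z+2(y-x)$, and $Tz-z=\alpha(Nz-z)=2\alpha(y-x)$. Since $\alpha>0$, this gives $Tz=z\iff Nz=z\iff y=x$. That settles $\Fix(T)=\Fix(N)$ at once.

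For \eqref{eq:DR-fixed-point-characterization}, first take $z\in\Fix(N)$, so that $y=x$. Put $a:=(z-x)/\gamma$; then \eqref{eq:proximal-optimality} gives $a\in\subd f(x)$. The relation $x=y=P_{\gamma g}(2x-z)$ gives $\bigl((2x-z)-x\bigr)/\gamma=(x-z)/\gamma=-a\in\subd g(x)$, and $z=x+\gamma a$. Conversely, suppose $z=x+\gamma a$ with $a\in\subd f(x)$ and $-a\in\subd g(x)$. Then $z\in x+\gamma\subd f(x)$, so $P_{\gamma f}z=x$ by \eqref{eq:proximal-optimality}. Next, $2x-z=x-\gamma a\in x+\gamma\subd g(x)$, so $P_{\gamma g}(2x-z)=x$. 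Hence $y=x$ and $z\in\Fix(N)$.

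For \eqref{eq:DR-shadow-zero-set}, note that the forward direction above already shows $P_{\gamma f}z=x$ satisfies $0=a+(-a)\in\subd f(x)+\subd g(x)$. Conversely, if $0\in\subd f(x)+\subd g(x)$, pick $a\in\subd f(x)$ with $-a\in\subd g(x)$; then $z=x+\gamma a$ is a fixed point whose shadow is $x$. Hence the set equality with $\operatorname{zer}(\subd f+\subd g)$ holds. The inclusion into $\argmin(f+g)$ is the trivial sum rule: adding the two subgradient inequalities \eqref{eq:subdifferential-definition} gives $f(w)+g(w)\ge f(x)+g(x)$ for every $w$.

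The only nonelementary point, and the one we expect to be the main obstacle, is the reverse inclusion under \eqref{eq:DR-sum-rule-qualification}. Here we would invoke the sum rule $\subd(f+g)=\subd f+\subd g$ under $0\in\operatorname{sri}(\dom f-\dom g)$ from \cite[Cor.~16.48]{BauschkeCombettes2017} rather than reprove it. A minimizer $x$ of $f+g$ satisfies $0\in\subd(f+g)(x)=\subd f(x)+\subd g(x)$ by Fermat's rule. The previous paragraph then produces a fixed point with shadow $x$, which gives the final claim.
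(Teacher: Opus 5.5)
Your proposal is correct and follows essentially the same route as the paper. Both unwind the reflections to get $y=x$ at a fixed point, read off $a=(z-x)/\gamma\in\subd f(x)$ and $-a\in\subd g(x)$ from the proximal optimality condition, and reverse the construction through $z=x+\gamma a$. Both then obtain the reverse inclusion from the sum rule plus Fermat's rule; your minor difference is that you get $\Fix(T)=\Fix(N)$ and the condition $y=x$ together from the identity $Tz-z=2\alpha(y-x)$, where the paper treats them in separate steps.
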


\begin{proof}
Because $Tz=z$ is equivalent to $\alpha(Nz-z)=0$ and $\alpha>0$, we first have $\Fix(T)=\Fix(N)$.

Let $z_\star\in\Fix(N)$ and define
\[
  x_\star=P_{\gamma f}z_\star,
  \qquad
  w_\star=R_{\gamma f}z_\star=2x_\star-z_\star,
  \qquad
  y_\star=P_{\gamma g}w_\star.
\]
The fixed-point identity $R_{\gamma g}w_\star=z_\star$ reads
$2y_\star-w_\star=z_\star$.  Substituting
$w_\star=2x_\star-z_\star$ gives $y_\star=x_\star$.
By the proximal optimality condition \eqref{eq:proximal-optimality},
\begin{equation}
\label{eq:DR-fixed-point-subgradients}
  a_\star:=\frac{z_\star-x_\star}{\gamma}
  \in\subd f(x_\star),
  \qquad
  \frac{w_\star-y_\star}{\gamma}
  =\frac{x_\star-z_\star}{\gamma}
  =-a_\star
  \in\subd g(x_\star).
\end{equation}
Hence $z_\star=x_\star+\gamma a_\star$ has the form displayed in \eqref{eq:DR-fixed-point-characterization}, and
$0\in\subd f(x_\star)+\subd g(x_\star)$.

Conversely, suppose that $a\in\subd f(x)$ and $-a\in\subd g(x)$, and define
\[
  z:=x+\gamma a,
  \qquad
  w:=x-\gamma a.
\]
The first inclusion and \eqref{eq:proximal-optimality} give $P_{\gamma f}z=x$, and therefore $R_{\gamma f}z=2x-z=w$.  The second inclusion gives $P_{\gamma g}w=x$, and hence
\[
  R_{\gamma g}R_{\gamma f}z
  =R_{\gamma g}w
  =2x-w
  =x+\gamma a
  =z.
\]
Thus $z\in\Fix(N)$ and $P_{\gamma f}z=x$.  This proves both \eqref{eq:DR-fixed-point-characterization} and the first equality in \eqref{eq:DR-shadow-zero-set}.

To verify the inclusion into the minimizer set directly, let
$a\in\subd f(x)$ and $-a\in\subd g(x)$.  For every $y\in\Hcal$, the two subgradient inequalities give
\[
  f(y)\ge f(x)+\ip a{y-x},
  \qquad
  g(y)\ge g(x)-\ip a{y-x}.
\]
Adding them yields $(f+g)(y)\ge(f+g)(x)$, so $x$ minimizes $f+g$.
Under \eqref{eq:DR-sum-rule-qualification}, the convex subdifferential sum rule gives
$\subd(f+g)(x)=\subd f(x)+\subd g(x)$.  Fermat's rule then implies that every minimizer satisfies
$0\in\subd f(x)+\subd g(x)$, which proves the reverse inclusion.  The qualification and this conclusion are among the standard sufficient conditions collected in \cite[Cor.~27.6]{BauschkeCombettes2017}.
\end{proof}

The next elementary closedness property is used in the convergence proof.

\begin{lemma}[demiclosedness of the fixed-point residual]
\label{lem:demiclosedness}
Let $S:\Hcal\to\Hcal$ be nonexpansive.  If $u_n\rightharpoonup u$ and
$u_n-Su_n\to0$, then $Su=u$.
\end{lemma}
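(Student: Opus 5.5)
This is the classical demiclosedness principle for nonexpansive maps in a Hilbert space. I would prove it with the Opial-type identity that comes from expanding squared norms, and I would avoid any compactness argument, since $\Hcal$ may be infinite-dimensional here. The key tool is the elementary expansion, valid for any $w\in\Hcal$ when $u_n\rightharpoonup u$:
\[
  \norm{u_n-w}^2=\norm{u_n-u}^2+2\ip{u_n-u}{u-w}+\norm{u-w}^2 .
\]
Weak convergence makes the middle term tend to zero. Hence
\[
  \limsup_n\norm{u_n-w}^2=\limsup_n\norm{u_n-u}^2+\norm{u-w}^2 .
\]

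Next I apply this identity with $w=Su$. By the triangle inequality and nonexpansiveness of $S$,
\[
  \norm{u_n-Su}\le\norm{u_n-Su_n}+\norm{Su_n-Su}\le\norm{u_n-Su_n}+\norm{u_n-u}.
\]
Since $u_n-Su_n\to0$, this gives $\limsup_n\norm{u_n-Su}\le\limsup_n\norm{u_n-u}$. Squaring and comparing with the identity above yields
\[
  \limsup_n\norm{u_n-u}^2+\norm{u-Su}^2\le\limsup_n\norm{u_n-u}^2 .
\]

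The quantity $\limsup_n\norm{u_n-u}$ is finite, because weakly convergent sequences are bounded by the uniform boundedness principle. It can therefore be cancelled, which leaves $\norm{u-Su}^2\le0$, so $Su=u$.

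The only delicate point is justifying that cancellation: it requires boundedness of $(u_n)$, and the squaring step also uses that the $o(1)$ term $\norm{u_n-Su_n}$ passes through the $\limsup$. Both are routine, so I expect no real obstacle beyond keeping the $\limsup$ manipulations honest. In the finite-dimensional setting used in the body of the paper, the argument is simpler still: weak and strong convergence coincide, and continuity of $S$ gives $u-Su=\lim(u_n-Su_n)=0$ directly.
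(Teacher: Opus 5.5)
Your proof is correct and follows essentially the same route as the paper. Both arguments expand $\norm{u_n-Su}^2$ around $u$ so that weak convergence kills the cross term, and both compare the result with the nonexpansiveness bound $\norm{u_n-Su}\le\norm{u_n-u}+o(1)$. The only cosmetic difference is that the paper passes to the limit in the difference $\norm{u_n-Su}^2-\norm{u_n-u}^2\to\norm{u-Su}^2$ rather than cancelling $\limsup_n\norm{u_n-u}^2$, with boundedness of $(u_n)$ playing the same role in both versions.
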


\begin{proof}
Set $r_n:=u_n-Su_n$.  Weak convergence makes $(u_n)$ bounded, and nonexpansiveness makes $(Su_n)$ bounded.  Since $r_n\to0$,
\[
  \norm{u_n-Su}^2
  =\norm{Su_n-Su+r_n}^2
  =\norm{Su_n-Su}^2+o(1)
  \le\norm{u_n-u}^2+o(1).
\]
On the other hand,
\[
  \norm{u_n-Su}^2-\norm{u_n-u}^2
  =2\ip{u_n-u}{u-Su}+\norm{u-Su}^2
  \longrightarrow\norm{u-Su}^2.
\]
The preceding upper bound therefore implies $\norm{u-Su}^2\le0$, so $Su=u$.  This is the demiclosedness principle for nonexpansive maps; see also \cite[Cor.~4.28]{BauschkeCombettes2017}.
\end{proof}

We will also use the following standard coupled graph-limit principle for maximal monotone operators.  Stating it explicitly records the precise technical fact needed to pass from the fixed-point sequence to the proximal shadow sequence.

\begin{fact}[coupled graph-limit principle]
\label{fact:coupled-graph-limit}
Let $A,B:\Hcal\rightrightarrows\Hcal$ be maximally monotone.  Suppose
$(x_n,a_n)\in\operatorname{gra}A$ and
$(y_n,b_n)\in\operatorname{gra}B$, and suppose that
\[
  x_n\rightharpoonup x,
  \qquad
  a_n\rightharpoonup a,
  \qquad
  x_n-y_n\to0,
  \qquad
  a_n+b_n\to0.
\]
Then $(x,a)\in\operatorname{gra}A$ and
$(x,-a)\in\operatorname{gra}B$; in particular,
$0\in Ax+Bx$.  This result is \cite[Cor.~26.6]{BauschkeCombettes2017} specialized to the identity coupling.  The strong consensus and balance limits are what make the simultaneous weak graph passage valid.
\end{fact}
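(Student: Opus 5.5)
Since the final statement is the coupled graph-limit principle (Fact~\ref{fact:coupled-graph-limit}), I will prove it with the standard product-space argument instead of rebuilding monotone-operator theory. The plan is to encode the two inclusions as one maximally monotone operator on $\Hcal\times\Hcal$. Then I use the fact that the graph of a maximally monotone operator is closed in the weak$\times$strong and strong$\times$weak senses, and more generally under the ``$\liminf\langle x_n,u_n\rangle\le\langle x,u\rangle$'' criterion. Concretely, I set $\mathcal C(x,y):=Ax\times By$. This operator is maximally monotone on $\Hcal\times\Hcal$ as a product of maximally monotone operators. Let $\mathcal S$ be the skew linear operator $\mathcal S(x,y):=(0,0)$ augmented by the normal cone of the diagonal. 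More directly, I consider $\mathcal C+N_{\Delta}$ with $\Delta=\{(x,x)\}$.

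Key steps, in order. First, I note that the pairs $w_n:=(x_n,y_n)$ and $u_n:=(a_n,b_n)$ lie in $\operatorname{gra}\mathcal C$. Second, I decompose $u_n$ into its diagonal and antidiagonal parts: $u_n=\tfrac12(a_n-b_n,b_n-a_n)+\tfrac12(a_n+b_n,a_n+b_n)$. I decompose $w_n$ the same way, as $\tfrac12(x_n+y_n,x_n+y_n)+\tfrac12(x_n-y_n,y_n-x_n)$. By hypothesis the antidiagonal part of $w_n$ tends to $0$ strongly and the diagonal part of $u_n$ tends to $0$ strongly. The remaining parts converge weakly to $(x,x)$ and $(a,-a)$ respectively, because $y_n=x_n-(x_n-y_n)\rightharpoonup x$ and $b_n\rightharpoonup -a$.

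Third, I verify the cross-term condition. I compute $\langle w_n,u_n\rangle=\langle x_n,a_n\rangle+\langle y_n,b_n\rangle=\langle x_n,a_n+b_n\rangle-\langle x_n-y_n,b_n\rangle$. Each term is a product of a bounded weakly convergent sequence and a strongly null sequence, so $\langle w_n,u_n\rangle\to0=\langle (x,x),(a,-a)\rangle$. Boundedness holds because weakly convergent sequences are bounded.

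Fourth, I apply the standard lemma that if $\mathcal C$ is maximally monotone, $(w_n,u_n)\in\operatorname{gra}\mathcal C$, $w_n\rightharpoonup w$, $u_n\rightharpoonup u$ and $\limsup\langle w_n,u_n\rangle\le\langle w,u\rangle$, then $(w,u)\in\operatorname{gra}\mathcal C$. Its proof is short: for any $(v,t)\in\operatorname{gra}\mathcal C$, monotonicity gives $0\le\langle w_n-v,u_n-t\rangle$. Expanding this and passing to the limit gives $0\le\langle w-v,u-t\rangle$, and maximality then yields $(w,u)\in\operatorname{gra}\mathcal C$. This gives $(x,a)\in\operatorname{gra}A$ and $(x,-a)\in\operatorname{gra}B$, hence $0\in Ax+Bx$.

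The main obstacle is the limit passage in the expanded monotonicity inequality. The products $\langle w_n,u_n\rangle$ involve two merely weakly convergent sequences, so they cannot be passed to the limit directly. The third step is exactly what handles this: the strong consensus $x_n-y_n\to0$ and the strong balance $a_n+b_n\to0$ convert the product into terms that pair a bounded sequence with a strongly null one. Every other term is linear in a single weakly convergent sequence and converges directly. I also need maximal monotonicity of the product operator, but that is immediate coordinatewise.
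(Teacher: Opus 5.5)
Your argument is correct, but it takes a different route from the one behind the paper's citation. The paper gives no proof of its own; it defers to \cite[Cor.~26.6]{BauschkeCombettes2017}. The standard proof there treats the dual variable of $B$ as an unknown and proceeds in four steps:
\begin{enumerate}[label=(\roman*)]
\item It forms $\mathcal A(x,v):=Ax\times B^{-1}v$ together with the skew operator $\mathcal S(x,v):=(v,-x)$.
\item It invokes the sum theorem ($\mathcal S$ is bounded, linear and monotone, hence maximally monotone with full domain) to get $\mathcal A+\mathcal S$ maximally monotone.
\item It notes that $(a_n+b_n,\,y_n-x_n)\in(\mathcal A+\mathcal S)(x_n,b_n)$, with $(x_n,b_n)\rightharpoonup(x,-a)$ and $(a_n+b_n,\,y_n-x_n)\to(0,0)$ strongly.
\item Weak$\times$strong closedness of the graph then gives $(0,0)\in(\mathcal A+\mathcal S)(x,-a)$, that is, $a\in Ax$ and $-a\in Bx$.
\end{enumerate}

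You instead keep both operators in primal form, $\mathcal C=A\times B$, and replace the sum theorem by the $\limsup$ closure criterion. The cross term is removed by the identity $\ip{x_n}{a_n}+\ip{y_n}{b_n}=\ip{x_n}{a_n+b_n}-\ip{x_n-y_n}{b_n}$. This identity is exactly the cancellation that the cited proof hides inside $\ip{\mathcal S w}{w}=0$. Your version is more elementary and self-contained, since it needs only the definition of maximality. It also extends verbatim to a linear coupling $L:\Hcal\to\Kcal$ via $\ip{x_n}{a_n+L^*b_n}-\ip{Lx_n-y_n}{b_n}$. The cited version buys a cleaner conclusion: plain weak$\times$strong closedness of a single operator, the one reused throughout primal--dual splitting theory. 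The price is the sum theorem.

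Three points of hygiene:
\begin{itemize}
\item Delete the aside about $\mathcal S(x,y):=(0,0)$ and $\mathcal C+N_\Delta$. You never use it, and it would not work as stated: $(x_n,y_n)$ lies off the diagonal, hence outside $\operatorname{dom}N_\Delta$, so the iterates are not in that graph.
\item The diagonal/antidiagonal splitting in your second step is superfluous. Only $y_n\rightharpoonup x$ and $b_n\rightharpoonup -a$ are used.
\item The claim that maximality of the product is ``immediate coordinatewise'' deserves one line. Suppose $((x,y),(u,v))$ is monotonically related to $\operatorname{gra}\mathcal C$. Then at least one of $(x,u)\in\operatorname{gra}A$, $(y,v)\in\operatorname{gra}B$ holds; otherwise adding a violating pair for $A$ and one for $B$ contradicts the hypothesis. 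Inserting that pair shows the other component is monotonically related to its own graph, hence lies in it.
\end{itemize}
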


\begin{theorem}[exact convergence in the general convex case]
\label{thm:general-convex-DR}
Assume $f,g\in\Gamma_0(\Hcal)$,
$\Fix(N)\ne\emptyset$, and $\alpha\in(0,1)$.  Starting from any
$z_0\in\Hcal$, generate $(x_k,y_k,z_{k+1})$ by
\eqref{eq:DR-proxf-first-x}--\eqref{eq:DR-proxf-first-z}.  Then:
\begin{enumerate}[label=\textup{(\roman*)}]
  \item for every $\bar z\in\Fix(N)$,
  \begin{equation}
  \label{eq:DR-Fejer-estimate}
    \norm{z_{k+1}-\bar z}^2
    \le\norm{z_k-\bar z}^2
    -\alpha(1-\alpha)\norm{Nz_k-z_k}^2;
  \end{equation}
  \item $Nz_k-z_k=2(y_k-x_k)\to0$;
  \item there exists $z_\star\in\Fix(N)$ such that
  $z_k\rightharpoonup z_\star$;
  \item with $x_\star=P_{\gamma f}z_\star$,
  \[
    x_k\rightharpoonup x_\star,
    \qquad
    y_k\rightharpoonup x_\star,
    \qquad
    0\in\subd f(x_\star)+\subd g(x_\star),
  \]
  and therefore $x_\star$ minimizes $f+g$.
\end{enumerate}
If $\Hcal$ is finite-dimensional, all three weak convergences above are strong.
\end{theorem}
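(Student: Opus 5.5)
The plan is the standard Krasnosel'ski\u{\i}--Mann argument for averaged maps, specialized to $T=(1-\alpha)\Id+\alpha N$.

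\emph{Step (i).} $N=R_{\gamma g}R_{\gamma f}$ is nonexpansive, since it is a composition of two maps that are nonexpansive by Lemma~\ref{lem:reflected-nonexpansive}. Fix $\bar z\in\Fix(N)$ and write $z_{k+1}-\bar z=(1-\alpha)(z_k-\bar z)+\alpha(Nz_k-N\bar z)$. Apply the convex-combination identity
\[
\norm{(1-\alpha)u+\alpha v}^2=(1-\alpha)\norm u^2+\alpha\norm v^2-\alpha(1-\alpha)\norm{u-v}^2
\]
with $u=z_k-\bar z$ and $v=Nz_k-\bar z$. Nonexpansiveness gives $\norm v\le\norm u$, and this yields \eqref{eq:DR-Fejer-estimate}.

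\emph{Step (ii).} Summing \eqref{eq:DR-Fejer-estimate} telescopically shows that $\sum_k\norm{Nz_k-z_k}^2\le\norm{z_0-\bar z}^2/(\alpha(1-\alpha))<\infty$. Hence $Nz_k-z_k\to0$. The identity $Nz_k-z_k=2(y_k-x_k)$ was computed right after \eqref{eq:DR-proxf-first-z}.

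\emph{Step (iii).} By \eqref{eq:DR-Fejer-estimate}, $(z_k)$ is Fej\'er monotone with respect to $\Fix(N)$. It is therefore bounded, and $\norm{z_k-\bar z}$ converges for every $\bar z\in\Fix(N)$. Any weak cluster point $u$ satisfies $Nu=u$ by Lemma~\ref{lem:demiclosedness} together with (ii). To show uniqueness of the cluster point, take two cluster points $u,u'$ and use Opial's argument: expand
\[
\norm{z_k-u}^2-\norm{z_k-u'}^2=2\ip{z_k}{u'-u}+\norm u^2-\norm{u'}^2.
\]
The left side converges, so $\ip{z_k}{u'-u}$ converges. Evaluating its limit along the two subsequences gives $\ip{u-u'}{u'-u}=0$, so $u=u'$. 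Thus $z_k\rightharpoonup z_\star\in\Fix(N)$. I expect this to be the main obstacle, though it is standard.

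\emph{Step (iv).} This is the delicate part, because $P_{\gamma f}$ is not weakly continuous. I will use Fact~\ref{fact:coupled-graph-limit} with $A=\subd f$ and $B=\subd g$, taking $a_k=(z_k-x_k)/\gamma\in\subd f(x_k)$ and $b_k=(2x_k-z_k-y_k)/\gamma\in\subd g(y_k)$. Then $x_k-y_k\to0$ and $a_k+b_k=(x_k-y_k)/\gamma\to0$ by (ii). The sequence $(x_k)$ is bounded because $P_{\gamma f}$ is nonexpansive, so I may pass to a weakly convergent subsequence $x_{k_j}\rightharpoonup x$. Along it, $a_{k_j}\rightharpoonup(z_\star-x)/\gamma$. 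The Fact then gives $(z_\star-x)/\gamma\in\subd f(x)$, i.e. $x=P_{\gamma f}z_\star=x_\star$ by \eqref{eq:proximal-optimality}, and also $0\in\subd f(x)+\subd g(x)$. Every weak cluster point of $(x_k)$ therefore equals $x_\star$, so the whole sequence converges weakly to $x_\star$, and $y_k\rightharpoonup x_\star$ as well. The minimizing property follows from Proposition~\ref{prop:fixed-point-solution}.

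In finite dimensions, weak and strong convergence coincide, so all three weak convergences are strong.
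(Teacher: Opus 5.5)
Your proposal is correct and follows the paper's proof essentially step for step: the convex-combination identity for the Fej\'er estimate, telescoping for (ii), demiclosedness plus the same two-cluster-point expansion for (iii), and Fact~\ref{fact:coupled-graph-limit} with $a_k=(z_k-x_k)/\gamma$ and $b_k=(2x_k-z_k-y_k)/\gamma$ for (iv). The only cosmetic difference is that you read off the weak limit of $a_{k_j}$ directly from $z_k\rightharpoonup z_\star$. The paper instead extracts a joint subsequence of $(x_k,a_k)$ and identifies $z_\star=\bar x+\gamma\bar a$ afterwards, so your version is a slight streamlining.
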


\begin{proof}
By \cref{lem:reflected-nonexpansive}, both $R_{\gamma f}$ and
$R_{\gamma g}$ are nonexpansive.  Their composition $N$ is therefore nonexpansive:
\[
  \norm{Nu-Nv}
  \le\norm{R_{\gamma f}u-R_{\gamma f}v}
  \le\norm{u-v}.
\]
Fix $\bar z\in\Fix(N)$.  The Hilbert-space identity
\[
  \norm{(1-\alpha)a+\alpha b}^2
  =(1-\alpha)\norm a^2+\alpha\norm b^2
   -\alpha(1-\alpha)\norm{a-b}^2
\]
applied to $a=z_k-\bar z$ and $b=Nz_k-N\bar z$ gives
\begin{align*}
  \norm{z_{k+1}-\bar z}^2
  &=(1-\alpha)\norm{z_k-\bar z}^2
    +\alpha\norm{Nz_k-N\bar z}^2\\
  &\quad
    -\alpha(1-\alpha)\norm{Nz_k-z_k}^2\\
  &\le\norm{z_k-\bar z}^2
    -\alpha(1-\alpha)\norm{Nz_k-z_k}^2.
\end{align*}
This proves \eqref{eq:DR-Fejer-estimate}.  Hence $(z_k)$ is Fej\'er monotone with respect to $\Fix(N)$ and is bounded.  Summing \eqref{eq:DR-Fejer-estimate} over $k$ gives
\[
  \alpha(1-\alpha)
  \sum_{k=0}^{\infty}\norm{Nz_k-z_k}^2
  \le\norm{z_0-\bar z}^2,
\]
so $Nz_k-z_k\to0$.  The explicit DRS formulas give
$Nz_k-z_k=2(y_k-x_k)$, proving (ii).

Every bounded sequence in a Hilbert space has a weakly convergent subsequence.  Let $z_{k_j}\rightharpoonup z$.  Since
$z_{k_j}-Nz_{k_j}\to0$, \cref{lem:demiclosedness} gives $z\in\Fix(N)$.  Thus every weak sequential cluster point of $(z_k)$ is a fixed point.
For each $c\in\Fix(N)$, the Fej\'er estimate shows that
$\norm{z_k-c}$ has a limit.  If $z$ and $\widehat z$ were two weak cluster points, then the convergent sequence
\[
  \norm{z_k-z}^2-\norm{z_k-\widehat z}^2
  =2\ip{z_k}{\widehat z-z}+\norm z^2-\norm{\widehat z}^2
\]
would have limit $-\norm{z-\widehat z}^2$ along a subsequence converging weakly to $z$ and limit $+\norm{z-\widehat z}^2$ along one converging weakly to $\widehat z$.  Hence $z=\widehat z$.  The bounded sequence $(z_k)$ therefore has a unique weak cluster point, denoted by $z_\star$, and $z_k\rightharpoonup z_\star\in\Fix(N)$.  This is the Krasnosel'skii--Mann argument in explicit form; compare \cite[Thm.~5.15]{BauschkeCombettes2017}.

It remains to identify the shadows.  Define the exact subgradients furnished by the two proximal calls:
\begin{equation}
\label{eq:DR-shadow-graph-pairs}
  a_k:=\frac{z_k-x_k}{\gamma}\in\subd f(x_k),
  \qquad
  b_k:=\frac{2x_k-z_k-y_k}{\gamma}\in\subd g(y_k).
\end{equation}
They satisfy
\begin{equation}
\label{eq:DR-shadow-balance}
  a_k+b_k=\frac{x_k-y_k}{\gamma}\longrightarrow0.
\end{equation}
The sequence $(x_k)$ is bounded because $P_{\gamma f}$ is nonexpansive and $(z_k)$ is bounded.  Consequently $(a_k)$ is bounded, and then so are $(y_k)$ and $(b_k)$ by
$y_k-x_k\to0$ and \eqref{eq:DR-shadow-balance}.

Choose a weakly convergent subsequence of the bounded pairs $(x_k,a_k)$, say
$x_{k_j}\rightharpoonup\bar x$ and
$a_{k_j}\rightharpoonup\bar a$.  Then
$y_{k_j}\rightharpoonup\bar x$ and
$b_{k_j}\rightharpoonup-\bar a$.  Applying
\cref{fact:coupled-graph-limit} with
$A=\subd f$ and $B=\subd g$ gives
\[
  \bar a\in\subd f(\bar x),
  \qquad
  -\bar a\in\subd g(\bar x).
\]
Moreover, $z_k=x_k+\gamma a_k$ and $z_k\rightharpoonup z_\star$, so
$z_\star=\bar x+\gamma\bar a$.  By
\eqref{eq:proximal-optimality}, this implies
$\bar x=P_{\gamma f}z_\star=:x_\star$.
Thus every weak cluster point of $(x_k)$ is the same point $x_\star$, and the bounded sequence $x_k$ converges weakly to $x_\star$.  Since $y_k-x_k\to0$, also $y_k\rightharpoonup x_\star$.  The subgradient inclusions show that
$0\in\subd f(x_\star)+\subd g(x_\star)$, and
\cref{prop:fixed-point-solution} shows that $x_\star$ minimizes $f+g$.
This shadow conclusion is the content of the classical Douglas--Rachford theorem; see \cite[Thm.~26.11 and Cor.~28.3]{BauschkeCombettes2017} and \cite{EcksteinBertsekas1992}.

Finally, in finite-dimensional Hilbert spaces weak and strong convergence coincide, which gives the last assertion.
\end{proof}

\section{Linear convergence for Douglas--Rachford}
\label{app:linear-drs}

This appendix derives the primal contraction estimate underlying the linear Douglas--Rachford bounds of Giselsson and Boyd \cite{GiselssonBoyd2017}.  The proof is included because the inexact results in the body of the paper perturb this exact contraction.

\begin{assumption}[strong convexity and smoothness of the smooth term]
\label{ass:GB-primal}
The functions $f,g\in\Gamma_0(\Hcal)$.  In addition, $f$ is Fr\'echet differentiable, $\sigma$-strongly convex, and $\beta$-smooth for constants
\[
  0<\sigma\le\beta<\infty.
\]
Equivalently, its gradient is $\sigma$-strongly monotone and
$\beta$-Lipschitz:
\begin{align}
\label{eq:strong-gradient-monotonicity}
  \ip{\grad f(u)-\grad f(v)}{u-v}
  &\ge\sigma\norm{u-v}^2,\\
\label{eq:gradient-smoothness}
  \norm{\grad f(u)-\grad f(v)}
  &\le\beta\norm{u-v}
\end{align}
for every $u,v\in\Hcal$.
\end{assumption}

The additional information in \eqref{eq:strong-gradient-monotonicity}--\eqref{eq:gradient-smoothness} sharpens monotonicity into a two-sided sector constraint.  The main tool for combining the lower and upper curvature bounds is the following form of the Baillon--Haddad theorem.

\begin{fact}[Baillon--Haddad cocoercivity]
\label{fact:Baillon-Haddad}
Let $\psi:\Hcal\to\R$ be convex and Fr\'echet differentiable, and suppose that $\grad\psi$ is $L$-Lipschitz for some $L>0$.  Then
\begin{equation}
\label{eq:Baillon-Haddad}
  \ip{\grad\psi(u)-\grad\psi(v)}{u-v}
  \ge\frac{1}{L}
  \norm{\grad\psi(u)-\grad\psi(v)}^2
  \qquad(u,v\in\Hcal).
\end{equation}
Thus a smooth convex gradient is not merely monotone: its inner product with the displacement controls the squared norm of the gradient change.  In operator terminology, $\grad\psi$ is $1/L$-cocoercive.  This is \cite[Cor.~18.17]{BauschkeCombettes2017}.
\end{fact}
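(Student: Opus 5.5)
The statement to prove is the Baillon--Haddad cocoercivity inequality \eqref{eq:Baillon-Haddad}. The plan is to first establish the descent lemma, then pass to an auxiliary convex function whose minimizer is known, and finally symmetrize.

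\textbf{Descent lemma.} For $u,v\in\Hcal$, write
\[
  \psi(v)-\psi(u)-\ip{\grad\psi(u)}{v-u}
  =\int_0^1\ip{\grad\psi(u+t(v-u))-\grad\psi(u)}{v-u}\,dt .
\]
The $L$-Lipschitz bound on the integrand then gives
\[
  \psi(v)\le\psi(u)+\ip{\grad\psi(u)}{v-u}+\tfrac L2\norm{v-u}^2 .
\]
The integral form is legitimate because $t\mapsto\psi(u+t(v-u))$ is $C^1$.

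\textbf{Shifted function.} Fix $u$ and define $h_u(w):=\psi(w)-\ip{\grad\psi(u)}{w}$. This function is convex, and its gradient $\grad\psi(w)-\grad\psi(u)$ is still $L$-Lipschitz. It vanishes at $w=u$, so by convexity $u$ is a global minimizer of $h_u$.

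Apply the descent lemma to $h_u$ at the point $v$, with step $w=v-\tfrac1L\grad h_u(v)$. This gives
\[
  h_u(u)\le h_u(w)\le h_u(v)-\tfrac1{2L}\norm{\grad h_u(v)}^2 .
\]
Unwinding the definition of $h_u$ yields
\[
  \psi(v)\ge\psi(u)+\ip{\grad\psi(u)}{v-u}+\tfrac1{2L}\norm{\grad\psi(v)-\grad\psi(u)}^2 .
\]

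\textbf{Symmetrization.} Write the same inequality with $u$ and $v$ interchanged and add the two. The function values cancel, and what remains is
\[
  \ip{\grad\psi(v)-\grad\psi(u)}{v-u}\ge\tfrac1L\norm{\grad\psi(v)-\grad\psi(u)}^2,
\]
which is exactly \eqref{eq:Baillon-Haddad}.

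The main point to get right is the second step. One must choose the shifted function $h_u$ so that convexity supplies a known minimizer, and then combine this with the descent step. Neither monotonicity of $\grad\psi$ nor the Lipschitz bound alone yields the $1/L$ coupling; the argument works only because both are used together through $h_u$.
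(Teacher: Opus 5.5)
Your proof is correct. The descent lemma (with the $C^1$ justification), the shifted function $h_u$ whose minimizer $u$ comes from convexity, the descent step from $v$, and the final symmetrization are all carried out properly.

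The paper does not prove this Fact itself; it only points to \cite[Cor.~18.17]{BauschkeCombettes2017}. Your argument therefore supplies a standard self-contained proof of the cited result. Along the way it also gives the slightly stronger one-sided bound $\psi(v)\ge\psi(u)+\ip{\grad\psi(u)}{v-u}+\frac{1}{2L}\norm{\grad\psi(v)-\grad\psi(u)}^2$.
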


\begin{lemma}[sector inequality for strongly convex smooth gradients]
\label{lem:sector}
Let $f:\Hcal\to\R$ be $a$-strongly convex and $b$-smooth, with
$0<a\le b$.  For $u,v\in\Hcal$, set
$s=u-v$ and $y=\grad f(u)-\grad f(v)$.  Then
\begin{equation}
\label{eq:sector-ineq}
  \ip{s}{y}
  \ge
  \frac{ab}{a+b}\norm{s}^2
  +\frac{1}{a+b}\norm{y}^2.
\end{equation}
Equivalently,
\begin{equation}
\label{eq:sector-ball-form}
  \norm{y-\tfrac{a+b}{2}s}
  \le\frac{b-a}{2}\norm{s}.
\end{equation}
The gradient difference therefore lies in the closed sector bounded by the slopes $a$ and $b$.
\end{lemma}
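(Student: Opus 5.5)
The plan is to reduce the claim to the Baillon--Haddad cocoercivity of Fact~\ref{fact:Baillon-Haddad}, applied to the shifted function
\[
  \psi(x):=f(x)-\frac a2\norm{x}^2 .
\]

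First, I will check that Fact~\ref{fact:Baillon-Haddad} applies to $\psi$. Since $f$ is $a$-strongly convex, $\psi$ is convex. Since $f$ is $b$-smooth and convex, the gradient $\grad\psi=\grad f-a\,\Id$ is $(b-a)$-Lipschitz. To see this, note that for convex $f$ with $a\Id\preceq$ "Hessian" $\preceq b\Id$ in the sense of monotone operators, the map $\grad f-a\Id$ has symmetric sector $[0,b-a]$. To avoid second derivatives, I will use the equivalent characterization that $\psi$ is convex and $\frac{b-a}{2}\norm{x}^2-\psi$ is convex. The latter holds because $\frac b2\norm{x}^2-f$ is convex, which is the standard equivalent form of $b$-smoothness for convex $f$. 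A convex function whose gradient is monotone with an upper sector bound of $b-a$ has a $(b-a)$-Lipschitz gradient; this is itself a Baillon--Haddad-type statement, so I will cite \cite[Thm.~18.15]{BauschkeCombettes2017}.

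Next comes the case split. If $a=b$, then $\grad\psi$ is constant, so $y=as$, and both inequalities hold with equality. If $a<b$, Fact~\ref{fact:Baillon-Haddad} with $L=b-a$ gives
\[
  \ip{y-as}{s}\ge\frac1{b-a}\norm{y-as}^2 .
\]

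Then I expand. Multiplying by $(b-a)$ and expanding $\norm{y-as}^2=\norm y^2-2a\ip sy+a^2\norm s^2$ yields
\[
  (b-a)\ip sy-(b-a)a\norm s^2\ge \norm y^2-2a\ip sy+a^2\norm s^2,
\]
which simplifies to $(a+b)\ip sy\ge ab\norm s^2+\norm y^2$. This is \eqref{eq:sector-ineq}.

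For the ball form, I will expand
\[
  \norm{y-\tfrac{a+b}2 s}^2-\tfrac{(b-a)^2}{4}\norm s^2=\norm y^2-(a+b)\ip sy+ab\norm s^2 ,
\]
using $\frac{(a+b)^2-(b-a)^2}{4}=ab$. The right-hand side is $\le0$ exactly when \eqref{eq:sector-ineq} holds, so the two forms are equivalent.

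The main obstacle is the first step, namely justifying that $\grad\psi$ is $(b-a)$-Lipschitz, since plain $b$-Lipschitzness of $\grad f$ does not trivially give this. I would first try to use the convexity of $\frac b2\norm x^2-f$ and then invoke the Bauschke--Combettes equivalence theorem. If that route is unavailable, I would fall back to the assumption of twice differentiability ($f\in C^2$ in the paper's setting): then $0\preceq\grad^2\psi\preceq(b-a)I$, and a mean-value integral of the Hessian gives the Lipschitz bound directly.
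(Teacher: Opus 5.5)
Your proposal is correct and matches the paper's proof step for step: the shift $\psi=f-\frac a2\norm{\cdot}^2$, convexity of $\frac{b-a}{2}\norm{\cdot}^2-\psi=\frac b2\norm{\cdot}^2-f$ combined with \cite[Thm.~18.15]{BauschkeCombettes2017} to get $(b-a)$-smoothness, Baillon--Haddad with $L=b-a$, the separate $a=b$ case, and the same expansion for the ball form. The $C^2$ fallback is unnecessary, because that cited equivalence already settles the step you flagged as the main obstacle.
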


\begin{proof}
First suppose $a<b$ and introduce the shifted function
\begin{equation}
\label{eq:shifted-function}
  \psi(x):=f(x)-\frac{a}{2}\norm{x}^2.
\end{equation}
Strong convexity of $f$ means exactly that $\psi$ is convex.  Smoothness of $f$ implies that $b\norm{\cdot}^2/2-f$ is convex; hence
\[
  \frac{b-a}{2}\norm{\cdot}^2-\psi
  =\frac{b}{2}\norm{\cdot}^2-f
\]
is convex.  The smooth-convex equivalences in
\cite[Thm.~18.15]{BauschkeCombettes2017} therefore show that
$\psi$ is $(b-a)$-smooth.  This explains why the shift removes the lower curvature $a$ and leaves the residual curvature interval $[0,b-a]$.

Now
\[
  \grad\psi(u)-\grad\psi(v)=y-a s.
\]
Applying the Baillon--Haddad estimate \eqref{eq:Baillon-Haddad} with
$L=b-a$ gives
\begin{equation}
\label{eq:BH-shifted}
  \ip{s}{y-a s}
  \ge\frac{1}{b-a}\norm{y-a s}^2.
\end{equation}
Multiplying by $b-a$ and expanding both sides yields
\begin{align*}
  (b-a)\bigl(\ip{s}{y}-a\norm{s}^2\bigr)
  &\ge\norm{y-a s}^2\\
  &=\norm y^2-2a\ip{s}{y}+a^2\norm s^2.
\end{align*}
Collecting the terms containing $\ip{s}{y}$ gives
\[
  (a+b)\ip{s}{y}
  \ge ab\norm s^2+\norm y^2,
\]
which is \eqref{eq:sector-ineq}.

If $a=b$, the function $\psi$ in \eqref{eq:shifted-function} is $0$-smooth, so its gradient is constant.  Hence
$y-a s=0$, and \eqref{eq:sector-ineq} holds with equality.
Finally, expanding \eqref{eq:sector-ball-form}, squaring both sides, and simplifying gives exactly
$(a+b)\ip{s}{y}\ge ab\norm s^2+\norm y^2$, so the two forms are equivalent.
\end{proof}

\begin{theorem}[contraction of the reflected smooth proximal map]
\label{thm:reflected-prox-contraction}
Under Assumption~\ref{ass:GB-primal}, for every $\gamma>0$ the reflected proximal map
$R_{\gamma f}=2P_{\gamma f}-\Id$ is contractive with factor
\begin{equation}
\label{eq:delta-primal}
  \delta
  :=\max\left\{
  \frac{\gamma\beta-1}{\gamma\beta+1},
  \frac{1-\gamma\sigma}{1+\gamma\sigma}
  \right\}
  =\max_{\lambda\in[\sigma,\beta]}
  \left|\frac{1-\gamma\lambda}{1+\gamma\lambda}\right|.
\end{equation}
In particular, $0\le\delta<1$ and
\begin{equation}
\label{eq:Rf-contractive}
  \norm{R_{\gamma f}u-R_{\gamma f}v}
  \le\delta\norm{u-v}
  \qquad(u,v\in\Hcal).
\end{equation}
The factor \eqref{eq:delta-primal} is the sharp uniform bound for this strongly convex, smooth class; see \cite{GiselssonBoyd2017}.
\end{theorem}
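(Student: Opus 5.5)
The plan is to reduce the contraction claim to a sector estimate for the proximal map itself, and then to a pointwise scalar bound. Fix $u,v\in\Hcal$ and set $p=P_{\gamma f}u$, $q=P_{\gamma f}v$, $s=p-q$ and $y=\grad f(p)-\grad f(q)$. The smooth proximal optimality condition \eqref{eq:proximal-optimality} reads $u=p+\gamma\grad f(p)$, so
\[
  u-v=s+\gamma y,
  \qquad
  R_{\gamma f}u-R_{\gamma f}v=2s-(s+\gamma y)=s-\gamma y.
\]
The claim \eqref{eq:Rf-contractive} is therefore equivalent to
\[
  \norm{s-\gamma y}^2\le\delta^2\norm{s+\gamma y}^2 .
\]
This must hold for every pair $(s,y)$ that obeys the sector constraint of Lemma~\ref{lem:sector} with $a=\sigma$ and $b=\beta$.

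\textbf{Key step.} I will pass to scaled variables $\hat y=\gamma y$, $\hat a=\gamma\sigma$ and $\hat b=\gamma\beta$. Lemma~\ref{lem:sector} then gives the ball form
\[
  \norm{\hat y-c\,s}\le r\norm{s},
  \qquad
  c=\tfrac{\hat a+\hat b}{2},
  \quad
  r=\tfrac{\hat b-\hat a}{2}.
\]
Equivalently, in the quadratic form,
\[
  (\hat a+\hat b)\ip{s}{\hat y}\ge \hat a\hat b\norm{s}^2+\norm{\hat y}^2 .
\]
Write $\Phi=\norm{s-\hat y}^2-\delta^2\norm{s+\hat y}^2$. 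The goal is to find $\mu\ge0$ such that
\[
  \Phi+\mu\bigl[(\hat a+\hat b)\ip{s}{\hat y}-\hat a\hat b\norm s^2-\norm{\hat y}^2\bigr]
\]
is a negative semidefinite quadratic form in $(s,\hat y)$. This S-lemma-style certificate proves $\Phi\le0$.

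\textbf{Reduction to the scalar case.} The form involves only $\norm s^2$, $\ip{s}{\hat y}$ and $\norm{\hat y}^2$, so it can be handled through its $2\times2$ coefficient matrix. Its negativity follows from the scalar case $\hat y=\lambda s$ with $\lambda\in[\hat a,\hat b]$. In that case $|1-\lambda|/(1+\lambda)\le\delta$ holds by definition of $\delta$, because the map $\lambda\mapsto(1-\lambda)/(1+\lambda)$ is monotone, so the maximum of its absolute value over $[\hat a,\hat b]$ is attained at an endpoint. That endpoint comparison also gives the equality of the two expressions for $\delta$ in \eqref{eq:delta-primal}.

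\textbf{Main obstacle.} The delicate step is the multiplier choice. I would pick $\mu$ so that the certificate vanishes on the two extreme rays $\hat y=\hat a s$ and $\hat y=\hat b s$, and then check that the resulting $2\times2$ matrix is negative semidefinite. If that check becomes messy, the fallback is a direct geometric argument. The ball form shows that the unit vector direction of $\hat y$ relative to $s$ lies in a disc. The ratio $\norm{s-\hat y}/\norm{s+\hat y}$ is the modulus of a Möbius transform $w\mapsto(1-w)/(1+w)$ in a two-dimensional subspace. This transform maps the disc centred at $c$ of radius $r$, which is symmetric about the real axis, onto a disc symmetric about the real axis. The maximum modulus over that image is attained at a real boundary point, namely $\lambda=\hat a$ or $\lambda=\hat b$, and this yields $\delta$.

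\textbf{Remaining checks.} The bound $\delta<1$ holds because $\hat a>0$ and $\hat b<\infty$ give $|1-\lambda|<1+\lambda$ at both endpoints. The bound $\delta\ge0$ holds because at least one of the two endpoint expressions is nonnegative: if $\gamma\beta<1$, then $\gamma\sigma<1$ as well.
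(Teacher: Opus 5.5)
Your reduction to $\norm{s-\hat y}\le\delta\norm{s+\hat y}$ with $\hat y=\gamma y$ is exactly the paper's. The multiplier step of your primary route, however, fails as stated.

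Write $C(s,\hat y):=(\hat a+\hat b)\ip{s}{\hat y}-\hat a\hat b\norm{s}^2-\norm{\hat y}^2$. On a ray $\hat y=\lambda s$ one has $C=(\lambda-\hat a)(\hat b-\lambda)\norm{s}^2$, which vanishes on both extreme rays. There $\Phi+\mu C=\Phi$ for every $\mu$, so the rule ``make the certificate vanish on both extreme rays'' does not determine $\mu$. Moreover, $\Phi(s,\lambda s)=\norm{s}^2[(1-\lambda)^2-\delta^2(1+\lambda)^2]$ vanishes at both $\lambda=\hat a$ and $\lambda=\hat b$ only when $\gamma^2\sigma\beta=1$ (or $\sigma=\beta$). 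Otherwise it is strictly negative on the inactive ray, so the rule cannot be met at all.

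Relatedly, negative semidefiniteness of the $2\times2$ coefficient matrix concerns all $(s,\hat y)$, including points outside the sector cone where $C<0$. So it does not follow ``from the scalar case'' by inspection. Passing from $\Phi\le0$ on the cone to the existence of some $\mu\ge0$ is exactly the lossless S-lemma, which you would have to invoke.

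The correct explicit condition is tangency on the \emph{active} ray $v$ only, i.e.\ $M(\mu)v=0$ for the coefficient matrix $M(\mu)$. If $\hat a$ is active ($\hat a\hat b\le1$, $\hat a<\hat b$), this gives $\mu=4(1-\hat a)/\bigl((1+\hat a)(\hat b-\hat a)\bigr)$. The remaining condition $\operatorname{tr}M(\mu)\le0$ then reduces to $(1+\hat a^2)(1-\hat a\hat b)\ge0$. For $\hat a=1/4$, $\hat b=2$, the value $\mu=48/35$ is the only admissible multiplier.

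The paper avoids computing any multiplier by enlarging the sector instead. With $a_\delta=(1-\delta)/(\gamma(1+\delta))$ and $b_\delta=(1+\delta)/(\gamma(1-\delta))$, it notes $a_\delta\le\sigma\le\beta\le b_\delta$ and $\gamma^2a_\delta b_\delta=1$. So $f$ is also $a_\delta$-strongly convex and $b_\delta$-smooth, and both extreme rays of the new sector are active. If $C_\delta$ denotes your $C$ for $[\gamma a_\delta,\gamma b_\delta]$, one gets the identity $\Phi=-(1-\delta^2)C_\delta$. Thus Lemma~\ref{lem:sector} on the enlarged sector is literally the claim, and your ``both extreme rays'' instinct becomes correct there, with an identically zero combined form.

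Your M\"obius fallback, by contrast, is correct and genuinely different from the paper. For $s\ne0$, take $e_1=s/\norm{s}$ and $e_2\perp e_1$ a unit vector in a plane containing $s,\hat y$, and write $\hat y=\norm{s}(w_1e_1+w_2e_2)$. This gives $\norm{s\mp\hat y}=\norm{s}\,|1\mp w|$ and turns the ball form into $|w-c|\le r$. Since $c-r=\hat a>0$, the pole $-1$ lies outside this disc. Its image under $w\mapsto(1-w)/(1+w)$ is therefore a bounded disc symmetric about $\R$, whose largest modulus is attained at a real boundary point, namely the image of $\hat a$ or $\hat b$. The case $s=0$ forces $\hat y=0$.

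This argument shows geometrically why only the endpoints matter and needs no multiplier, at the cost of importing M\"obius-map facts. The paper's route stays inside Lemma~\ref{lem:sector} and finishes with a one-line algebraic identity. You should either make the M\"obius argument your main proof or replace your multiplier rule by the sector enlargement.
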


\begin{proof}
Let
$p=P_{\gamma f}u$ and $q=P_{\gamma f}v$.  Since $f$ is differentiable, the proximal optimality condition \eqref{eq:proximal-optimality} becomes
\[
  u-p=\gamma\grad f(p),
  \qquad
  v-q=\gamma\grad f(q).
\]
Set
\[
  s:=p-q,
  \qquad
  y:=\grad f(p)-\grad f(q).
\]
Subtracting the two optimality equations and using the definition of the reflected proximal map gives
\begin{equation}
\label{eq:u-v-s-y}
  u-v=s+\gamma y,
  \qquad
  R_{\gamma f}u-R_{\gamma f}v=s-\gamma y.
\end{equation}
Thus the desired contraction is exactly the comparison
\begin{equation}
\label{eq:ratio-target}
  \norm{s-\gamma y}
  \le\delta\norm{s+\gamma y}.
\end{equation}
The vectors on the two sides are the reflected and original displacements, respectively.

The scalar function
$\lambda\mapsto |1-\gamma\lambda|/(1+\gamma\lambda)$ decreases on
$(0,1/\gamma]$ and increases on $[1/\gamma,\infty)$.  Its maximum on the interval $[\sigma,\beta]$ is therefore attained at an endpoint, which explains the second expression in \eqref{eq:delta-primal}.  At least one of the two displayed endpoint terms is nonnegative, and each absolute endpoint ratio is strictly below one; hence $0\le\delta<1$.

To convert this scalar endpoint bound into a bound for the nonlinear gradient sector, define
\begin{equation}
\label{eq:a-b-delta}
  a_\delta:=\frac{1-\delta}{\gamma(1+\delta)},
  \qquad
  b_\delta:=\frac{1+\delta}{\gamma(1-\delta)}.
\end{equation}
These are the lower and upper scalar curvatures for which the reflected scalar ratio has magnitude exactly $\delta$.  The definition of $\delta$ implies
\begin{equation}
\label{eq:interval-contained}
  a_\delta\le\sigma\le\beta\le b_\delta.
\end{equation}
Indeed,
$\delta\ge(1-\gamma\sigma)/(1+\gamma\sigma)$ is equivalent to
$a_\delta\le\sigma$, while
$\delta\ge(\gamma\beta-1)/(\gamma\beta+1)$ is equivalent to
$\beta\le b_\delta$.
Therefore $f$ is also $a_\delta$-strongly convex and
$b_\delta$-smooth.  Applying \cref{lem:sector} to the pair $(s,y)$ with these enlarged sector endpoints gives
\begin{equation}
\label{eq:sector-delta}
  \ip{s}{y}
  \ge
  \frac{a_\delta b_\delta}{a_\delta+b_\delta}\norm{s}^2
  +\frac{1}{a_\delta+b_\delta}\norm{y}^2.
\end{equation}
A direct calculation from \eqref{eq:a-b-delta} gives
\begin{equation}
\label{eq:delta-ab-calc}
  a_\delta b_\delta=\frac{1}{\gamma^2},
  \qquad
  \frac{1}{a_\delta+b_\delta}
  =\frac{\gamma(1-\delta^2)}{2(1+\delta^2)}.
\end{equation}
Substitution into \eqref{eq:sector-delta} yields
\begin{equation}
\label{eq:key-c-delta}
  \ip{s}{y}
  \ge
  \frac{1-\delta^2}{2\gamma(1+\delta^2)}
  \left(\norm{s}^2+\gamma^2\norm{y}^2\right).
\end{equation}
Finally, expand the difference of the two squared sides of \eqref{eq:ratio-target}:
\begin{align*}
  \norm{s-\gamma y}^2
  -\delta^2\norm{s+\gamma y}^2
  &=(1-\delta^2)
    \left(\norm{s}^2+\gamma^2\norm{y}^2\right)\\
  &\quad
    -2\gamma(1+\delta^2)\ip{s}{y}\\
  &\le0
\end{align*}
by \eqref{eq:key-c-delta}.  This proves \eqref{eq:ratio-target} and hence \eqref{eq:Rf-contractive}.
\end{proof}

\begin{theorem}[exact linear convergence of primal Douglas--Rachford]
\label{thm:exact-linear-DR}
Assume that Assumption~\ref{ass:GB-primal} holds.  Fix $\gamma>0$, let $\delta$ be given by \eqref{eq:delta-primal}, and choose
\begin{equation}
\label{eq:alpha-range-primal}
  \alpha\in\left(0,\frac{2}{1+\delta}\right).
\end{equation}
Then the exact Douglas--Rachford operator
$T=(1-\alpha)\Id+\alpha R_{\gamma g}R_{\gamma f}$ is a contraction with factor
\begin{equation}
\label{eq:q-primal}
  q:=|1-\alpha|+\alpha\delta<1.
\end{equation}
It has a unique fixed point $z_\star$, and the iterates satisfy
\begin{equation}
\label{eq:exact-z-linear}
  \norm{z_{k+1}-z_\star}
  \le q\norm{z_k-z_\star}
  \le q^{k+1}\norm{z_0-z_\star}.
\end{equation}
The proximal shadow $x_k=P_{\gamma f}z_k$ converges $R$-linearly to the unique minimizer
$x_\star=P_{\gamma f}z_\star$, with the sharper estimate
\begin{equation}
\label{eq:exact-x-linear}
  \norm{x_k-x_\star}
  \le\frac{1}{1+\gamma\sigma}\norm{z_k-z_\star}
  \le\frac{q^k}{1+\gamma\sigma}\norm{z_0-z_\star}.
\end{equation}
\end{theorem}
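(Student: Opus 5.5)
The plan is to combine the contraction of $R_{\gamma f}$ from Theorem~\ref{thm:reflected-prox-contraction} with the nonexpansiveness of $R_{\gamma g}$ from Lemma~\ref{lem:reflected-nonexpansive}. Together these show that $N=R_{\gamma g}R_{\gamma f}$ is a $\delta$-contraction:
\[
\norm{Nu-Nv}\le\norm{R_{\gamma f}u-R_{\gamma f}v}\le\delta\norm{u-v}.
\]
For the relaxed map, write $Tu-Tv=(1-\alpha)(u-v)+\alpha(Nu-Nv)$. The triangle inequality then gives $\norm{Tu-Tv}\le(|1-\alpha|+\alpha\delta)\norm{u-v}=q\norm{u-v}$. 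The absolute value is what handles $\alpha>1$, which is why the admissible range extends beyond $(0,1)$.

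Next I verify $q<1$ on the interval \eqref{eq:alpha-range-primal}. I split into two cases. If $\alpha\le1$, then $q=1-\alpha(1-\delta)<1$ because $\delta<1$ and $\alpha>0$. If $1<\alpha<2/(1+\delta)$, then $q=\alpha(1+\delta)-1<1$ exactly by the upper bound on $\alpha$.

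Since $\Hcal$ is complete and $T$ is a strict contraction, Banach's fixed-point theorem gives a unique fixed point $z_\star$, and the estimate \eqref{eq:exact-z-linear} follows by induction from $\norm{Tz_k-Tz_\star}\le q\norm{z_k-z_\star}$. By Proposition~\ref{prop:fixed-point-solution}, $x_\star=P_{\gamma f}z_\star$ satisfies $0\in\grad f(x_\star)+\subd g(x_\star)$ and hence minimizes $f+g$. Uniqueness of the minimizer follows from the strong convexity of $f+g$, which holds because $f$ is $\sigma$-strongly convex and $g$ is convex.

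For the sharper shadow estimate \eqref{eq:exact-x-linear}, I will use the optimality relation $u-v=s+\gamma y$ from \eqref{eq:u-v-s-y}, where $s=P_{\gamma f}u-P_{\gamma f}v$ and $y=\grad f(P_{\gamma f}u)-\grad f(P_{\gamma f}v)$. Taking the inner product with $s$ and applying strong monotonicity \eqref{eq:strong-gradient-monotonicity} gives
\[
\ip{u-v}{s}\ge(1+\gamma\sigma)\norm{s}^2.
\]
Cauchy--Schwarz then yields $\norm{s}\le\norm{u-v}/(1+\gamma\sigma)$. Applying this with $u=z_k$ and $v=z_\star$ and combining with \eqref{eq:exact-z-linear} gives the result.

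None of these steps is a serious obstacle. The only point requiring care is the $\alpha>1$ regime, where $T$ is no longer averaged; the contraction must therefore come entirely from $\delta<1$ through the triangle-inequality bound, not from Fej\'er-type arguments.
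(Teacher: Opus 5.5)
Your proposal is correct and follows the paper's proof essentially step for step. Both arguments get the $\delta$-contraction of $R_{\gamma g}R_{\gamma f}$ from Theorem~\ref{thm:reflected-prox-contraction} and Lemma~\ref{lem:reflected-nonexpansive}, apply the triangle inequality with the two-case check of $q<1$, invoke Banach--Picard together with Proposition~\ref{prop:fixed-point-solution}, and obtain the shadow estimate from the strong-monotonicity and Cauchy--Schwarz bound \eqref{eq:strong-prox-Lipschitz}.
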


\begin{proof}
By \cref{thm:reflected-prox-contraction}, $R_{\gamma f}$ is
$\delta$-contractive.  By \cref{lem:reflected-nonexpansive},
$R_{\gamma g}$ is nonexpansive.  Consequently their composition
$C:=R_{\gamma g}R_{\gamma f}$ is $\delta$-contractive:
\[
  \norm{Cu-Cv}
  \le\norm{R_{\gamma f}u-R_{\gamma f}v}
  \le\delta\norm{u-v}.
\]
For arbitrary $u,v\in\Hcal$, the triangle inequality gives
\begin{align*}
  \norm{Tu-Tv}
  &=\norm{(1-\alpha)(u-v)+\alpha(Cu-Cv)}\\
  &\le |1-\alpha|\norm{u-v}
       +\alpha\norm{Cu-Cv}\\
  &\le\bigl(|1-\alpha|+\alpha\delta\bigr)\norm{u-v}.
\end{align*}
If $0<\alpha\le1$, then
$q=1-\alpha+\alpha\delta=1-\alpha(1-\delta)<1$.
If $\alpha>1$, then
$q=\alpha(1+\delta)-1<1$ exactly when
$\alpha<2/(1+\delta)$.  Thus \eqref{eq:alpha-range-primal} is precisely the range in which $T$ is a strict contraction.

Because a Hilbert space is complete, the Banach--Picard fixed-point theorem gives a unique $z_\star\in\Fix(T)$ and the one-step and iterated estimates in \eqref{eq:exact-z-linear}; see also \cite[Thm.~1.50]{BauschkeCombettes2017}.  By \cref{prop:fixed-point-solution},
$x_\star=P_{\gamma f}z_\star$ satisfies
$0\in\subd f(x_\star)+\subd g(x_\star)$ and therefore minimizes $f+g$.  Since $f$ is strongly convex and $g$ is convex, this minimizer is unique.

It remains to establish the sharpened shadow estimate.  Let
$p=P_{\gamma f}u$ and $q=P_{\gamma f}v$.  From the two proximal optimality conditions and the $\sigma$-strong monotonicity of $\grad f$,
\begin{align*}
  \ip{u-v}{p-q}
  &=\norm{p-q}^2
    +\gamma\ip{\grad f(p)-\grad f(q)}{p-q}\\
  &\ge(1+\gamma\sigma)\norm{p-q}^2.
\end{align*}
Cauchy--Schwarz therefore yields
\begin{equation}
\label{eq:strong-prox-Lipschitz}
  \norm{P_{\gamma f}u-P_{\gamma f}v}
  \le\frac{1}{1+\gamma\sigma}\norm{u-v}.
\end{equation}
Applying \eqref{eq:strong-prox-Lipschitz} to $u=z_k$ and $v=z_\star$, and then using \eqref{eq:exact-z-linear}, proves \eqref{eq:exact-x-linear}.
\end{proof}

\begin{corollary}[optimal Giselsson--Boyd parameters in the primal bound]
\label{cor:optimal-primal}
Under Assumption~\ref{ass:GB-primal}, the contraction bound in
\cref{thm:exact-linear-DR} is minimized by
\begin{equation}
\label{eq:gamma-alpha-opt-primal}
  \alpha_\star=1,
  \qquad
  \gamma_\star=\frac{1}{\sqrt{\sigma\beta}}.
\end{equation}
The optimized factor is
\begin{equation}
\label{eq:optimal-factor-primal}
  q_\star
  =\frac{\sqrt{\kappa}-1}{\sqrt{\kappa}+1},
  \qquad
  \kappa:=\frac{\beta}{\sigma}.
\end{equation}
\end{corollary}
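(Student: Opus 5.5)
The plan is to optimize the contraction factor $q(\alpha,\gamma)=|1-\alpha|+\alpha\,\delta(\gamma)$ from \cref{thm:exact-linear-DR} in two stages. First minimize over $\gamma$ for fixed $\alpha$, then minimize over $\alpha$. For fixed $\alpha>0$, $q$ is an increasing affine function of $\delta$, so the $\gamma$-minimization reduces to minimizing $\delta(\gamma)$ alone, provided the admissibility constraint \eqref{eq:alpha-range-primal} is respected afterwards.

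\textbf{Step 1 (minimize $\delta$ over $\gamma$).} Use the endpoint form in \eqref{eq:delta-primal}. The first term $(\gamma\beta-1)/(\gamma\beta+1)$ is increasing in $\gamma$. The second term $(1-\gamma\sigma)/(1+\gamma\sigma)$ is decreasing in $\gamma$. Hence their maximum is minimized where the two terms are equal. Setting them equal and clearing denominators gives $\gamma^2\sigma\beta=1$, so $\gamma_\star=1/\sqrt{\sigma\beta}$. Substituting back and dividing numerator and denominator by $\sqrt{\sigma}$ gives
\[
  \delta_\star=\frac{\sqrt{\beta/\sigma}-1}{\sqrt{\beta/\sigma}+1}=\frac{\sqrt\kappa-1}{\sqrt\kappa+1}.
\]
This minimizer of $\delta$ does not depend on $\alpha$, so it is optimal for every admissible $\alpha$ simultaneously.

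\textbf{Step 2 (minimize over $\alpha$ at fixed $\delta$).} The function $\alpha\mapsto|1-\alpha|+\alpha\delta$ is piecewise linear.
\begin{itemize}
  \item On $(0,1]$ it equals $1-\alpha(1-\delta)$, which is decreasing because $\delta<1$.
  \item On $[1,2/(1+\delta))$ it equals $\alpha(1+\delta)-1$, which is increasing.
\end{itemize}
So the minimum is attained at $\alpha_\star=1$, with value $\delta$. Combining with Step 1 gives the joint minimum $q_\star=\delta_\star$ at $(\alpha_\star,\gamma_\star)$, which is \eqref{eq:optimal-factor-primal}. Since $\delta_\star<1$, the choice $\alpha=1$ lies in the admissible interval $(0,2/(1+\delta))$.

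The only delicate point is making the joint minimization over $(\alpha,\gamma)$ rigorous. The argument is that for any admissible pair, $q(\alpha,\gamma)\ge\delta(\gamma)\ge\delta_\star=q(1,\gamma_\star)$. The first inequality is Step 2 applied at that $\gamma$, and the second is Step 1. Everything else is routine monotonicity of scalar functions.
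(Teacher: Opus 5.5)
Your proposal is correct and matches the paper's proof. The paper likewise minimizes $\alpha\mapsto|1-\alpha|+\alpha\delta$ at $\alpha=1$ using the slopes $-(1-\delta)$ and $1+\delta$. It then minimizes $\delta(\gamma)$ by equating the increasing and decreasing endpoint terms, which gives $\gamma^2\sigma\beta=1$. Your explicit chain $q(\alpha,\gamma)\ge\delta(\gamma)\ge\delta_\star=q(1,\gamma_\star)$ just states outright how the two one-variable minimizations combine, a step the paper leaves implicit.
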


\begin{proof}
For fixed $\gamma$, the factor
$q(\alpha)=|1-\alpha|+\alpha\delta$ is affine with slope
$-(1-\delta)<0$ on $(0,1]$ and affine with slope
$1+\delta>0$ on $[1,2/(1+\delta))$.  It is therefore minimized at
$\alpha=1$, where $q=\delta$.

It remains to minimize $\delta$ over $\gamma>0$.  The endpoint function
\[
  \gamma\longmapsto\frac{\gamma\beta-1}{\gamma\beta+1}
\]
is strictly increasing, whereas
\[
  \gamma\longmapsto\frac{1-\gamma\sigma}{1+\gamma\sigma}
\]
is strictly decreasing.  The maximum of the two is minimized where they are equal:
\[
  \frac{\gamma\beta-1}{\gamma\beta+1}
  =\frac{1-\gamma\sigma}{1+\gamma\sigma}.
\]
Cross-multiplication gives $\gamma^2\sigma\beta=1$, hence
$\gamma_\star=1/\sqrt{\sigma\beta}$.  Substituting this value into either endpoint term gives
\[
  \delta_\star
  =\frac{\sqrt{\beta/\sigma}-1}
         {\sqrt{\beta/\sigma}+1}
  =\frac{\sqrt\kappa-1}{\sqrt\kappa+1},
\]
which is \eqref{eq:optimal-factor-primal}.  Giselsson and Boyd show that this class bound is tight \cite{GiselssonBoyd2017}.
\end{proof}

\bibliographystyle{plainnat}
\bibliography{cr_drs}

\end{document}